\newtheorem{proposition}{Proposition}
\newtheorem{conjecture}{Conjecture}
\newtheorem{theorem}{Theorem}
\newtheorem{observation}{Observation}
\newtheorem{corollary}{Corollary}
\newcommand{\asc}{{\rm asc}\,}
\newcommand{\tp}{{\rm top}\,}
\newcommand{\bottom}{{\rm bottom}\,}
\newcommand{\e}{\mathbf{e}}
\newcommand{\bb}{\mathbf{b}}
\newcommand{\Sn}{\mathbf{S}}
\newcommand{\I}{\mathbf{I}}
\newcommand{\TT}{\mathcal{T}}
\tikzset{dot/.style={draw,shape=circle,fill=black,scale=.7}}
\author{Sylvie Corteel\affiliationmark{1}\thanks{Supported in part by grant ANR-08-JCJC-0011}
  \and Megan A. Martinez\affiliationmark{2}
  \and Carla D. Savage\affiliationmark{3} \\
  \and Michael Weselcouch\affiliationmark{4}}
\title{Patterns in Inversion Sequences I}
\affiliation{
  % one line per affiliation, no postal codes, grant numbers or similar
  LIAFA, CNRS, Universit\'e Paris Diderot 7, Paris, France\\
  Department of Mathematics, Ithaca College, Ithaca, NY\\
  Department of Computer Science, North Carolina State University, Raleigh, NC\\
  Department of Mathematics, North Carolina State University, Raleigh, NC}
\keywords{inversion sequences, pattern avoidance, enumeration, Schr\"oder numbers}
\begin{document}
\publicationdetails{18}{2016}{2}{2}{1325}
\maketitle
\begin{abstract}
  Permutations that avoid given patterns have been studied in great depth for their connections to other fields of mathematics, computer science, and biology.  From a combinatorial perspective, permutation patterns have served as a unifying interpretation that relates a vast array of combinatorial structures.  
In this paper, we introduce the notion of patterns in inversion sequences.  A sequence $(e_1,e_2,\ldots,e_n)$ is an inversion sequence if $0 \leq e_i<i$ for all $i \in [n]$.  Inversion sequences of length $n$ are in bijection with permutations of length $n$; an inversion sequence can be obtained from any  permutation $\pi=\pi_1\pi_2\ldots \pi_n$ by setting $e_i = |\{j \ | \ j < i \ {\rm and} \ \pi_j > \pi_i \}|$.  This correspondence makes it a natural extension to study patterns in inversion sequences much in the same way that patterns have been studied in permutations.  This paper, the first of two on patterns in inversion sequences, focuses on the enumeration of inversion sequences that avoid words of length three.  Our results connect patterns in inversion sequences to a number of well-known numerical sequences including Fibonacci numbers, Bell numbers, Schr\"oder numbers, and Euler up/down numbers. 
\end{abstract}

%\title{Patterns in Inversion Sequences I}
%\author{Sylvie Corteel\thanks{LIAFA, CNRS, Université Paris Diderot 7, Case 7014, 75205 Paris cedex 13, France, {\tt corteel@liafa.univ-paris-diderot.fr}} \ \ Megan A. Martinez\thanks{Department of Mathematics, Ithaca College, Ithaca, NY 14850, {\tt mmartinez@ithaca.edu}} \ \  Carla D. Savage\thanks{Department of Computer Science, North Carolina State University, Raleigh, NC 27695-8206, {\tt savage@ncsu.edu}} \ \  and Michael Weselcouch\thanks{Department of Mathematics, North Carolina State University, Raleigh, NC 27695-8205, {\tt mweselc@ncsu.edu}}}

%\date{}							% Activate to display a given date or no date

\section{Overview}

A permutation $\pi=\pi_1\pi_2\ldots \pi_n \in \Sn_n$ is said to \emph{contain} a pattern $\sigma=\sigma_1\sigma_2\ldots\sigma_k \in \Sn_k$ if there exist indices $i_1<i_2<\cdots<i_k$ such that for every $a,b \in \{1,2,\ldots,k\}$, we have $\pi_{i_a}<\pi_{i_b}$ if and only if $\sigma_a<\sigma_b$.  Otherwise, $\pi$ is said to \emph{avoid} the pattern $\sigma$.  Let $\Sn_n(\sigma)=\{ \pi \in \Sn_n \mid \pi \text{ avoids } \sigma \}$.  Given any $\sigma \in \Sn_k$, the \emph{avoidance sequence} of $\sigma$ is the integer sequence, \[|\Sn_1(\sigma)|, |\Sn_2(\sigma)|, |\Sn_3(\sigma)|, \ldots.\] The avoidance sequences for various $\sigma$ count a great number of well-known combinatorial structures.  As a result, the study of permutation patterns provides a unifying interpretation for a number of disparate discrete structures.  Early work of MacMahon  \cite{macmahon} enumerating permutations avoiding 123 and of Knuth \cite{knuth1,knuth3} on 231 and stack-sortable permutations  established by 1973 that for every $\sigma \in\Sn_3$, 
$\Sn_n(\sigma)$ is counted by the Catalan numbers.
In  1985 Simion and Schmidt published the first systematic study of pattern avoidance in permutations \cite{SimionSchmidt}.
Thirty years later, there is a rich body of work demonstrating the connections between pattern avoidance and many areas of mathematics and computation.  See, for example, the survey of Kitaev \cite{kitaev}.

Recently, the $s$-inversion sequences, $\I_n^{(s)}$, defined for a positive integer sequence $s=(s_1, \ldots, s_n)$ by
$$\I_n^{(s)} = \{ (e_1,e_2,\ldots, e_n )\ | \ 0 \leq e_i < s_i\},$$ were introduced in \cite{SS} to enumerate certain families of partitions  via Ehrhart theory.   For a variety of sequences $s$, natural statistics on  $\I_n^{(s)}$ have the same distribution as natural statistics on 
other equinumerous combinatorial families,  a phenomenon that was used to settle an open question about Coxeter groups in \cite{SV}.

The words of length $n$ over the alphabet $\{0,1, \ldots,k-1\}$ can be viewed as the inversion sequences $\I_n^{(k,k, \ldots,k)}$.
When $s=(1,2,\ldots,n)$, the $s$-inversion sequences $\I_n = \I_n^{(1,2, \ldots, n)}$ have been used in various ways to encode permutations $\Sn_n$.  
For example, the map  $\Theta(\pi):  \Sn_n \rightarrow \I_n$ defined for  $\pi=\pi_1 \ldots \pi_n \in \Sn_n$ by $\Theta(\pi)=(e_1,e_2, \ldots, e_n)$, where $e_i = |\{j \ | \ j < i \ {\rm and} \ e_j > e_i \}|$,
 is a bijection with several nice properties.
These connections to words and permutations make it  natural  to study pattern avoidance in inversion sequences in the same way that pattern avoidance has been studied in words and permutations.

Given a word $p=p_1p_2\ldots p_k \in \{0,1,\ldots,k-1\}^k$, define the \emph{reduction} of $p$ to be the word obtained by replacing the $i$th smallest entries of $p$ with $i-1$.  For instance, the reduction of $3052662$ is $2031441$.  We say that an inversion sequence $e \in \I_n$ \emph{contains} the pattern $p$, if there exist some indices $i_1<i_2<\cdots<i_k$ such that the reduction of $e_{i_1}e_{i_2}\ldots e_{i_k}$ is $p$.  Otherwise, $e$ is said to \emph{avoid} $p$.
 Let $\I_n(p)=\{ e\in \I_n \mid e \text{ avoids } p \}$.  The \emph{avoidance sequence} of $p$ is the integer sequence \[|\I_1(p)|, |\I_2(p)|, |\I_3(p)|, \ldots.\]

Our focus in this paper is a study of inversion sequences avoiding a three-letter word $p$. 
We discover a surprisingly rich collection of enumerative results, as well as intriguing conjectures.  
There will be several examples where pattern-avoiding inversion sequences provide more natural models of combinatorial sequences than previously known.

%in \cite{DS} Duncan and Steingr{\'i}msson considered pattern avoidance in {\em ascent sequences}, a special kind of inversion sequence, and obtained interesting enumerative results. (An ascent sequence is an inversion sequence $e$ in which for each position $i>1$, $e_i$ is at most one more than the number of ascents in the sequence $(e_1, \ldots, e_i)$).

This paper is one of the 
first\footnote{When our paper was first posted to the arXiv we were notified by Mansour that he and Shattuck had independently obtained results on  $|\I_n(\sigma)|$ for the patterns
$\sigma=012,021,102,201,210$ in \cite{Mansour}.  Their methods are quite complementary to ours, as we will describe in Section 2.}
systematic studies of pattern avoidance in inversion sequences,
although in \cite{DS} Duncan and Steingr{\'i}msson considered pattern avoidance in {\em ascent sequences}, introduced in \cite{Bousquet},  and obtained interesting enumerative results.
(An ascent sequence is an integer sequence $(e_1,e_2,\ldots,e_n)$ in which $e_1=0$ and $e_i$ is a nonnegative integer at most one more than the number of ascents in the sequences $(e_1,e_2,\ldots,e_{i-1})$.
The ascent sequences of length $n$ thus form a subset of $\I_n$.)
%(An ascent sequence is an inversion sequence $e$ in which for each position $i>1$, $e_i$ is at most one more than the number of ascents in the sequence $(e_1, \ldots, e_i)$).  
Some of the open questions posed in \cite{DS} were settled by Mansour and Shattuck in \cite{MS}.

%In a paper of Mansour and Shattuck, that has yet to appear, a number of pattern-avoiding inversion sequences avoiding permutations of length 3 are enumerated \cite{mansour}.  They employ generating function techniques to great effect to enumerate $|\I_n(\tau)|$ for $\tau=012,021,102,201,210$, with $|\I_n(120)|$ remaining an open question.

%In this paper, we consider inversion sequences that avoid not only permutations of length 3, but words of length 3.  Our methods, instead of focusing on generating functions, seek to study the behavior of pattern-avoiding inversion sequences.  We provide recurrences that are arrived at through careful examination of structure and invoke a number of natural statistics on inversion sequences.  In some cases, these statistics provide a natural relationship to other combinatorial structures, allowing us to find explicit bijections.  This approach helps us glean substantial information concerning the behavior of pattern-avoiding inversion sequences, allows for some elegant enumeration proofs, and inspires a number of open questions.

Our approach in this paper is combinatorial.  For each pattern $p$,  we first observe 
an alternate characterization of the $p$-avoiding inversion sequences.  
%For example, the inversion sequences avoiding 021 are those whose positive elements form a weakly increasing sequence.
For example:  {\em the inversion sequences avoiding 011 are those whose positive elements are distinct} (see Section 3.3).
We then use the observation to define the structure of the $p$-avoiding inversion sequences and  relate them to equinumerous combinatorial families via bijections, recurrences or generating functions.
 In the process, we  discover and prove refinements via correspondences between natural statistics.  For example, the number of 011-avoiding inversion sequences in $\I_n$ with $k$ {\em zeros} is equal to the number of partitions of an $n$-element set into $k$ {\em nonempty blocks} (Section 3.3).

In Section 2 we consider inversion sequences  $\I_n$ avoiding a given 3-letter permutation of $\{0,1,2\}$. We show that the number of inversion sequences avoiding {\bf 012} is given by the {\bf odd-indexed Fibonacci numbers} and that the inversion sequences avoiding {\bf 021}  are counted by the {\bf large Schr\"oder numbers}.
We prove that the patterns {\bf 201} and {\bf 210} are {\em Wilf equivalent}, i.e. that they have the same avoidance sequence.
This  sequence does not appear in the OEIS, but we derive a recurrence for it.
Enumerative results for the patterns  012, 021, 201, 102, and 210 also appear in \cite{Mansour}, where 
Mansour and Shattuck  additionally prove that
the number of inversion sequences avoiding {\bf 102} is given by the sequence A200753 in the On-Line Encyclopedia of Integer Sequences (OEIS) \cite{Sloane}.
The number of inversion sequences avoiding {\bf 120} does not appear in the OEIS and counting them remains an open problem.

In Section 3 we consider inversion sequences  $\I_n$ avoiding a given 3-letter pattern with repeated symbols.
We prove that  the inversion sequences avoiding {\bf 000} are counted by the {\bf Euler up/down numbers}.  
We show that the inversion sequences avoiding {\bf 001} are counted by {\bf powers of two}, and those avoiding {\bf 011} are counted by the {\bf Bell numbers}.
Additionally, we prove that the number of inversion sequences avoiding {\bf 101} is the same as the number avoiding
{\bf 110} and this is the same as the number of {\bf permutations avoiding the vincular pattern 1-23-4}.
The avoidance sequence for the pattern {\bf 010} does not appear in the OEIS 
nor does the number of inversion sequences avoiding {\bf 100}.  Counting either of these sets is an open problem.

In Section 4, we return to {\bf 021}-avoiding inversion sequences.  We examine the correspondence between $\I_n(021)$ and Schr\"oder $(n-1)$-paths, and between  $\I_n(021)$ and certain binary trees by introducing two  further bijections.  Each bijection succeeds in relating a variety of different statistics in inversion sequences and those combinatorial families.  Surprisingly, the ascent statistic in $\I_n(021)$ is {\em symmetric}, and we prove this by defining a bijection between $\I_n(021)$ and a tree structure that is known to be counted by the Schr\"oder numbers.  Section 4 is a testament to the rich combinatorial structure that can be uncovered when examining a class of pattern-avoiding inversion sequences in-depth.

The results on inversion sequences avoiding permutations and words of length 3 are summarized in Table~\ref{permswords}.
In the last column of the table, we use $|\I_7(p)|$ as an identifier for the avoidance sequence of $\I_n(p)$.

This paper is the first part of a larger study on patterns in inversion sequences.
In subsequent work, we consider a generalization of pattern-avoiding inversion sequences that includes many more surprising results and conjectures.

Throughout this paper, we let $[n]=\{1,2,\ldots, n \}$.  For an inversion sequence $e=(e_1,e_2, \ldots, e_n) \in \I_n$,
 let $\sigma_t(e)=
 (e_1',e_2', \ldots, e_n')$ where $e_i'=0$ if $e_i=0$ and otherwise $e_i'=e_i+t$ (we allow for negative $t$).  This operation will also be applied to  substrings of an inversion sequence.  
 
 We use concatenation to add an element to the beginning or end of an inversion sequence: $0\cdot e$ is the inversion sequence $(0,e_1,e_2, \ldots, e_n)$ and for $0 \leq i \leq n$, $e \cdot i$ is the inversion sequence
 $(e_1, e_2, \ldots, e_n, i)$.

\begin{table}
\hrule 
\vspace{.1in}
\begin{tabbing}
XXXXXxxxxxx\=xxxxxxxxxxxxxxxxxxxxxxxxxxxxxx\=xxxxxxxxxxxxxxx\= xxxxxxx \= \kill

Pattern  $p$\> \ \ \  $a_n = |\I_n(p)|$ counted by: \>in OEIS? \>  sequence identifier $a_7$\\
\\

012 \> $F_{2n-1}$  \  \ (Boolean permutations)\> A001519 \> \>233\\
021 \>  Large Schr\"oder numbers \> A006318 \>\> 1806\\
102 \>  $A(x)=1+(x-x^2)(A(x))^3$   \cite{Mansour} \> A200753 \>\> 1694\\
120 \> ? \> no  (A263778)\> \> 2803\\
201 \>  Theorem \ref{rec:210ab} and \eqref{count210} \> no  (A263777)\> \> 4306\\
210 \>  Theorem \ref{rec:210ab} and \eqref{count210} \> no  (A263777)\> \> 4306\\
\\
000 \> Euler up/down numbers \> A000111 \> \> 1385\\
001 \> $2^{n-1}$  \ \ \ ($|\Sn_n(132,231)|$)\> A000079  \> \> 64\\
010 \> ? \> no (A263779) \> \> 979\\
100 \> ? \> no  (A263780)\> \> 3399\\
011 \> Bell numbers \> A000110 \> \> 877\\
101 \> $|\Sn_n$(1-23-4)$|$ \> A113227 \> \> 3207\\
110 \> $|\Sn_n$(1-23-4)$|$ \> A113227 \> \> 3207
\end{tabbing}
\hrule

\caption{Enumeration of inversion sequences avoiding permutations and words. (OEIS numbers in parentheses were newly assigned after this paper was posted to the arXiv)}
\label{permswords}
\end{table}

\section{Inversion sequences avoiding permutations}
\subsection{Avoiding {\bf 012}:  $F_{2n-1}$ and Boolean permutations}

Let $F_n$ denote the $n$th Fibonacci number, where $F_0=0$, $F_1=1$  and for $n \geq 2$, $F_n=F_{n-1}+F_{n-2}$.
Note that $a_n=F_{2n-1}$ satisfies the recurrence  $a_n=3a_{n-1}-a_{n-2}$, with initial conditions $a_1=1$, $a_2=2$.

Permutations avoiding both  $321$ and $3412$  are known as {\em Boolean permutations}
\cite{Ptenner,tenner}
 and are counted by  $F_{2n-1}$.  In this section we show that $\I_n(012)$ is the number of Boolean permutations of $[n]$.

\begin{observation}
The inversion sequences avoiding $012$ are those whose positive elements form a weakly decreasing sequence.
\label{observation:012}
\end{observation}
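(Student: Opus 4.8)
The statement to prove is an equivalence: an inversion sequence $e \in \I_n$ avoids the pattern $012$ if and only if its positive elements form a weakly decreasing sequence. I would prove this by unwinding the definition of containment in both directions.

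First the plan is to recall what it means for $e$ to contain $012$: there exist indices $i_1 < i_2 < i_3$ such that the reduction of $e_{i_1} e_{i_2} e_{i_3}$ equals $012$, which is exactly the condition $e_{i_1} < e_{i_2} < e_{i_3}$ (three strictly increasing values have reduction $012$). So $e$ avoids $012$ if and only if there is no strictly increasing subsequence of length three, i.e. $e$ has no three indices $i<j<k$ with $e_i < e_j < e_k$.

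For the forward direction, I would argue the contrapositive: suppose the positive elements of $e$ do not form a weakly decreasing sequence. Then there are two positions $j < k$ with $0 < e_j < e_k$. Since $e$ is an inversion sequence, $e_1 = 0$ (as $0 \le e_1 < 1$), and more importantly $0 \le e_j < j$, so there is room; in fact we may take $i_1 = 1$, giving $e_{i_1} = 0 < e_j < e_k$, which exhibits the pattern $012$. (If one prefers not to invoke $e_1=0$, note that $e_j < j$ forces $j \ge 2$, and since $e_j \ge 1$ the value $e_j$ is at least $1$, so there must be an earlier index — but the cleanest route is simply $e_1 = 0$.) Hence if $e$ avoids $012$ its positive elements are weakly decreasing.

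For the reverse direction, suppose the positive elements of $e$ form a weakly decreasing sequence and suppose for contradiction that $e$ contains $012$ via $i_1 < i_2 < i_3$ with $e_{i_1} < e_{i_2} < e_{i_3}$. Then $e_{i_2}$ and $e_{i_3}$ are both strictly positive (since $e_{i_3} > e_{i_2} > e_{i_1} \ge 0$), they appear in that order, and $e_{i_2} < e_{i_3}$ contradicts the assumption that the positive subsequence is weakly decreasing. The only mild subtlety — which is really just bookkeeping rather than a genuine obstacle — is making sure the two relevant entries are indeed among the "positive elements" so that the weakly-decreasing hypothesis applies to them; this is immediate from $e_{i_3} > e_{i_2} > 0$. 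I do not anticipate a hard step here: the whole proof is a direct translation of the reduction-based definition of pattern containment into the inequality $e_{i_1} < e_{i_2} < e_{i_3}$, and then observing that the first entry of such a triple can always be taken to be a zero (at the front of the sequence) or is irrelevant to the positive subsequence.
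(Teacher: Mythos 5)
Your proof is correct: the paper states this as an Observation without proof, and your argument (reduction of a length-3 word to $012$ means a strictly increasing triple, then using $e_1=0$ to supply the first entry of the pattern in one direction and noting the last two entries of any such triple are positive in the other) is exactly the routine verification the authors leave implicit. No gaps.
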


\begin{theorem}
$|\I_n(012)| = F_{2n-1}$.
\end{theorem}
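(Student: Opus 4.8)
The plan is to set up a recurrence for $a_n := |\I_n(012)|$ matching $a_n = 3a_{n-1} - a_{n-2}$ with $a_1 = 1$, $a_2 = 2$, which by the remark above forces $a_n = F_{2n-1}$. By Observation~\ref{observation:012}, an inversion sequence $e = (e_1,\dots,e_n)$ avoids $012$ exactly when its nonzero entries, read left to right, form a weakly decreasing sequence. The key structural handle is the \emph{last} entry $e_n$: since $0 \le e_n < n$, and since appending $e_n$ to a shorter $012$-avoiding sequence preserves avoidance iff $e_n$ is not strictly larger than some earlier \emph{nonzero} entry — equivalently, iff $e_n$ is $0$ or is $\le$ every positive entry already present (so that the nonzero subsequence stays weakly decreasing). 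So I would classify $e \in \I_n(012)$ by the value of $e_n$ and by the smallest positive entry (if any) occurring among $e_1,\dots,e_{n-1}$.

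Concretely, I would first decompose: either $e$ has no positive entry at all (the all-zeros sequence, contributing $1$), or it has a positive entry. For those with a positive entry, let $m$ be the minimum positive value among $e_1,\dots,e_{n-1}$ and track $e_n$. A cleaner route is probably a \emph{refined} recurrence: let $b_{n,k}$ be the number of $e \in \I_n(012)$ whose smallest positive entry equals $k$ (with $k=0$ meaning the all-zeros sequence and also lumping in sequences whose only positive entries are... actually one must be careful). Rather than guess the exact refinement, I would instead argue directly for the $3a_{n-1} - a_{n-2}$ recurrence via a sign-reversing / inclusion bijection: map $\I_n(012)$ to three copies of $\I_{n-1}(012)$ (corresponding to roughly: delete $e_n$ when it's $0$; delete $e_n$ when it's positive and record it; a shift operation) minus an over-counted copy of $\I_{n-2}(012)$. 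The shift operator $\sigma_t$ defined in the preliminaries is exactly the tool: increasing all positive entries by $1$ is a bijection onto $012$-avoiding sequences with no positive entry equal to a forbidden small value, which is how the correction term $-a_{n-2}$ typically arises.

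The main obstacle I anticipate is pinning down the correction term correctly — making the case analysis on $e_n$ versus $e_{n-1}$ (and the minimum positive entry) clean enough that the over/under-counting collapses exactly to $-a_{n-2}$, rather than producing an unwieldy double sum. A safe fallback, if the slick three-copies bijection resists, is to prove a two-variable recurrence for $b_{n,j} := $ (number of $e \in \I_n(012)$ with $e_n = j$), observe that $b_{n,j}$ depends on $j$ only through $j=0$ versus $j \ge 1$ in a controlled way (because appending any $j$ with $1 \le j \le (\text{current min positive})$ is allowed), sum over $j$, and eliminate the auxiliary variable by a standard difference argument to recover $a_n = 3a_{n-1} - a_{n-2}$. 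Checking $a_1 = 1$ (just $(0)$) and $a_2 = 2$ (namely $(0,0)$ and $(0,1)$) is immediate, which completes the induction and yields $|\I_n(012)| = F_{2n-1}$.
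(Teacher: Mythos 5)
You have the right target---the recurrence $a_n = 3a_{n-1}-a_{n-2}$ with $a_1=1$, $a_2=2$---and the right tools (Observation~\ref{observation:012} and the shift $\sigma_1$), and this is indeed the strategy of the paper's proof. But your proposal stops exactly where that proof begins: you never commit to a decomposition, and you explicitly flag the correction term as an unresolved obstacle. The decomposition that works is the following. Each $f \in \I_n(012)$ arises from some $e \in \I_{n-1}(012)$ under at least one of three injections: $e \mapsto e\cdot 0$, $e \mapsto e\cdot 1$, and $e \mapsto 0\cdot\sigma_1(e)$ (prepend a zero and add $1$ to every positive entry). Coverage holds because if $f_n \ge 2$ then, by weak decrease of the positive entries, \emph{every} positive entry of $f$ is $\ge 2$, so $f = 0\cdot \sigma_1(e)$ with $e = \sigma_{-1}(f_2,\dots,f_n)$. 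The images of the second and third maps are disjoint (the third produces no entry equal to $1$), so the only overlap is between the first and third maps; it consists precisely of the sequences $0\cdot\sigma_1(x)\cdot 0$ with $x \in \I_{n-2}(012)$, which is where the $-a_{n-2}$ comes from. This is the missing content of the argument.

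Both concrete routes you float in its place have genuine problems. Classifying by the value of $e_n$ and ``recording'' a positive $e_n$ does not yield a clean multiple of $a_{n-1}$, because the number of admissible positive values for $e_n$ depends on the minimum positive entry of the prefix, which varies over $\I_{n-1}(012)$. And the fallback claim that $b_{n,j}$ ``depends on $j$ only through $j=0$ versus $j\ge 1$'' is false: $b_{n,1}=a_{n-1}$ since appending a $1$ is always legal, but for $j\ge 2$ one may only append $j$ to prefixes all of whose positive entries are $\ge j$; already $b_{3,2}=1$ while $b_{3,1}=2$ (note $(0,1,2)$ contains $012$). The whole point of the third map above is to avoid tracking this minimum: it converts the condition ``$f_n\ge 2$'' into membership in a set that is a bijective image of all of $\I_{n-1}(012)$. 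So the proposal, as written, does not yet constitute a proof.
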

\begin{proof}
 By Observation \ref{observation:012},  if $e \in \I_{n-1}(012)$, the following are all in $\I_n(012)$:  $e \cdot 0$, $e \cdot 1$, and $0 \cdot \sigma_1(e)$.
Every element in $\I_{n}(012)$ arises from an element of $\I_{n-1}(012)$ in at least one of these three ways, but certain elements are counted twice, namely those of the form $0 \cdot x \cdot 0$ where $x \in \I_{n-2}(012)$.
Thus $|\I_n(012)|$ satisfies the recurrence $a_n=3a_{n-1}-a_{n-2}$, with initial conditions $a_1=1$, $a_2=2$.  This is the same
recurrence satisfied by  $F_{2n-1}$.
\end{proof}

In view of the connection between Boolean permutations  and Coxeter groups highlighted in the recent paper of Petersen and Tenner \cite{Ptenner}, it would be nice to have a simple bijection encoding them as $012$-avoiding inversion sequences.

\subsection{Avoiding {\bf 021}:   the large Schr\"oder numbers}
% (A006318, \#1806:  1,2,6,22,90,394,1806,8558)}

A {\em Schr\"oder  $n$-path } is a path in the plane from $(0,0)$ to $(2n,0)$, never going below the $x$-axis, using only the steps
$(1,1)$ (up), $(1,-1)$ (down) and  $(2,0)$ (flat).  For example, using $U$, $D$, and $F$ for up, down, and flat steps, respectively,  the Schr\"oder 14-path
$p=UUDUFUDUFDDDUUDUDDUUUFDDD$ is shown in Figure \ref{Schroeder Path}.

\tikzset{sdot/.style={draw,shape=circle,fill=black,scale=.3}}
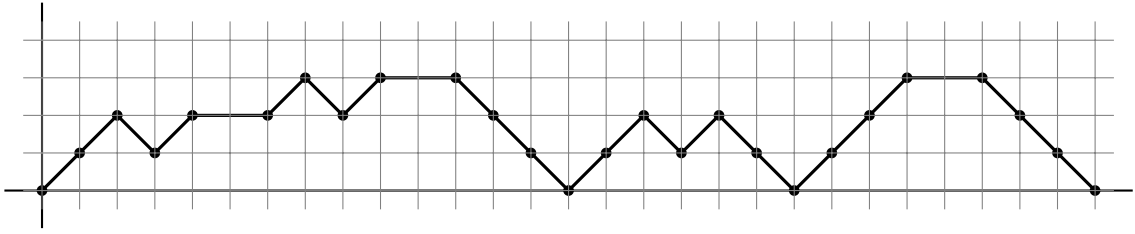
\begin{figure}
\begin{tikzpicture}[scale=.5]
\draw[very thick] (0,0) node[sdot]{}--(1,1) node[sdot]{}--(2,2) node[sdot]{}--(3,1) node[sdot]{}--(4,2) node[sdot]{}--(6,2) node[sdot]{}--(7,3) node[sdot]{}--(8,2) node[sdot]{}--(9,3) node[sdot]{}--(11,3) node[sdot]{}--(12,2) node[sdot]{}--(13,1) node[sdot]{}--(14,0) node[sdot]{}--(15,1) node[sdot]{}--(16,2) node[sdot]{}--(17,1) node[sdot]{}--(18,2) node[sdot]{}--(19,1) node[sdot]{}--(20,0) node[sdot]{}--(21,1) node[sdot]{}--(22,2) node[sdot]{}--(23,3) node[sdot]{}--(25,3) node[sdot]{}--(26,2) node[sdot]{}--(27,1) node[sdot]{}--(28,0) node[sdot]{};
\draw[thick] (-1,0)--(29,0);
\draw[thick] (0,-1)--(0,5);
\draw[thin, gray] (-.5,-.5) grid (28.5,4.5);
\end{tikzpicture}
\caption{The Schr\"oder 14-path $p=UUDUFUDUFDDDUUDUDDUUUFDDD$} \label{Schroeder Path}
\end{figure}

Let $R_n$ denote the set of Schr\"oder $n$-paths  and let $r_n = |R_n|$. It is well known that the generating function for $r_n$ is

\begin{equation}
R(x)  =\sum_{n =0}^{\infty} r_nx^n =  \frac{1-x-\sqrt{x^2-6x+1}}{2x}.
\label{eqn:Rgf}
\end{equation}
To relate Schr\"oder paths to inversion sequences, 
first note that the $021$-avoiding inversion sequences have the following simple characterization.
\begin{observation}
An inversion sequence avoids $021$ if and only if its positive entries are weakly increasing.
\label{observation:021}
\end{observation}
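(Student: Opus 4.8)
The plan is to unwind the definition of pattern containment into a purely order-theoretic statement and then argue each implication directly. First I would record that, because the word $021$ has three distinct letters, an inversion sequence $e=(e_1,e_2,\ldots,e_n)$ contains $021$ precisely when there are indices $i_1<i_2<i_3$ with $e_{i_1}<e_{i_3}<e_{i_2}$: the reduction of $e_{i_1}e_{i_2}e_{i_3}$ can equal $021$ only if these three entries are pairwise distinct (so that the reduction has three distinct letters) and are arranged so that the first is least, the second is greatest, and the third is in the middle. With this translation in hand, the claim becomes: no such triple of indices exists if and only if the subsequence of positive entries of $e$ is weakly increasing.

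For the ``only if'' direction I would argue by contraposition. Suppose the positive entries of $e$ are not weakly increasing, so there are positions $j<k$ with $e_j$ and $e_k$ both positive and $e_j>e_k$. Since $e_j\ge 1$ together with $e_j<j$ forces $j\ge 2$, the index $1$ lies strictly before $j$; and $e_1=0$ because every inversion sequence satisfies $0\le e_1<1$. Then $1<j<k$ and $0=e_1<e_k<e_j$, so $e_1e_je_k$ reduces to $021$ and $e$ contains $021$. The only ingredient here that is not completely mechanical is the observation that $e_1=0$ always holds, which is exactly what lets us supply the ``$0$'' of the pattern for free; this is the one point worth making explicit.

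For the ``if'' direction, suppose $e$ contains $021$, witnessed by $i_1<i_2<i_3$ with $e_{i_1}<e_{i_3}<e_{i_2}$. Then $e_{i_3}>e_{i_1}\ge 0$ forces $e_{i_3}\ge 1$, and $e_{i_2}>e_{i_3}$ then forces $e_{i_2}\ge 2$; in particular $e_{i_2}$ and $e_{i_3}$ are both positive, they occur in the order $i_2<i_3$, and $e_{i_2}>e_{i_3}$, so the positive entries of $e$ are not weakly increasing. Combining the two contrapositives yields the stated equivalence.

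I do not anticipate a genuine obstacle: essentially all of the content is the translation of ``$e$ contains $021$'' into the chain of inequalities $e_{i_1}<e_{i_3}<e_{i_2}$, plus the trivial but crucial fact that $e_1=0$. The argument should occupy only a few lines, and it runs in parallel to the analogous reasoning behind Observation~\ref{observation:012}.
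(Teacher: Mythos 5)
Your proof is correct. The paper states this observation without proof, and your argument is exactly the routine verification it leaves implicit: the key points you make explicit --- that containment of $021$ amounts to indices $i_1<i_2<i_3$ with $e_{i_1}<e_{i_3}<e_{i_2}$, and that $e_1=0$ supplies the ``$0$'' of the pattern for free whenever two positive entries appear in decreasing order --- are precisely what makes the observation immediate.
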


Using this characterization, we show that the elements of $\I_n(021)$ are in bijection with the Schr\"oder paths in $R_{n-1}$.  

\begin{theorem}

For $n \geq 1$, $\left | \I_n(021) \right | = r_{n-1}$.
\label{theorem:021}
\end{theorem}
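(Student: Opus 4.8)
The plan is to exhibit an explicit bijection between $\I_n(021)$ and $R_{n-1}$, using Observation \ref{observation:021} to control the structure of $021$-avoiding inversion sequences. By the observation, an inversion sequence $e=(e_1,\ldots,e_n)\in\I_n(021)$ is completely determined by the positions and values of its zero entries together with its positive entries, and the positive entries, read left to right, form a weakly increasing sequence; moreover each positive entry $e_i$ must satisfy $1\le e_i<i$. So such a sequence looks like a weakly increasing ``staircase'' of positive values interspersed with zeros, subject only to the position bound $e_i<i$. The first coordinate is always $e_1=0$, so really we are encoding the last $n-1$ entries, which matches the index shift to $R_{n-1}$.

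First I would make the structural description precise: writing down, for $e\in\I_n(021)$, the set $Z=\{i:e_i=0\}$ (which always contains $1$) and the nondecreasing word of positive values $e_{j_1}\le e_{j_2}\le\cdots\le e_{j_m}$ at the non-zero positions $j_1<\cdots<j_m$, and noting that the constraints are exactly $1\le e_{j_\ell}<j_\ell$ and nothing couples distinct positions beyond monotonicity. Then I would build a Schröder $(n-1)$-path by scanning positions $i=2,3,\ldots,n$ and emitting a step for each: a flat step (or a down step) when $e_i=0$, and an up/down combination recording how much $e_i$ exceeds the previous positive value when $e_i>0$. The standard way to turn a bounded weakly increasing sequence into a lattice path is to let the height after reading position $i$ encode the current positive value (or $0$), using up-steps to climb when the value jumps, down-steps/flat-steps to account for zeros and for the ``slack'' $i-1-e_i$; the bound $e_i<i$ is exactly what keeps the path from going below the axis, and the path returns to height $0$ at the end because of how the final slack is paid off. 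Alternatively, and perhaps more cleanly, I would recall the known bijection between $R_{n-1}$ and, say, Schröder-counted lattice paths or Motzkin-like objects, and route through that; but a direct path construction keeps the statistic-refinements of Section 4 within reach, which is presumably why the authors want it.

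An alternative, lower-tech route is to prove $|\I_n(021)|=r_{n-1}$ by a recurrence: condition on the position of the last zero in $e\in\I_n(021)$, or on the value $e_n$, decompose, and check that the resulting bivariate recurrence matches the functional equation $R(x)=1+xR(x)+xR(x)^2$ for the large Schröder numbers after the index shift. This avoids constructing an explicit map but is less illuminating. Given the surrounding text (Section 4 ``returns to $021$-avoiding inversion sequences'' and examines ``the correspondence between $\I_n(021)$ and Schröder $(n-1)$-paths''), I expect the authors to give the explicit bijection here, so I would do the same.

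The main obstacle will be getting the bijection's bookkeeping exactly right: translating the position-dependent bound $e_i<i$ into the ``never below the $x$-axis'' condition, and ensuring the path ends at $(2(n-1),0)$ rather than at some positive height — i.e., correctly accounting for the ``unused capacity'' $i-1-e_i$ at each step and for the trailing zeros. One has to be careful that the map is well-defined (the emitted step sequence is a legal Schröder path), injective (the inversion sequence can be read back off the heights and step types), and surjective (every Schröder $(n-1)$-path arises). Verifying surjectivity is usually the fiddly part: one must check that an arbitrary path, decoded, yields values satisfying both weak monotonicity of the positives and the strict bound $e_i<i$, and this is where the non-negativity of the path gets used in an essential way.
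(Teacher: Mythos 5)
Your proposal does not actually contain a proof: both routes are left at the level of a plan, and the part you defer (``getting the bijection's bookkeeping exactly right'') is the entire mathematical content of the theorem. The scan-and-emit map as you sketch it fails already on small cases: if each zero emits an $F$ and each positive entry emits $U$'s recording its increment over the previous positive value followed by a $D$, then $e=(0,1,1)$ produces $UDD$, which dips below the axis and ends at height $-1$; the number of up steps equals the final positive value while the number of down steps equals the number of positive entries, and these need not agree, so the path does not close up. Repairing this requires deciding exactly how the slack $i-1-e_i$ and the trailing zeros are paid off, and no candidate rule is proposed, let alone verified to be well defined, injective, and surjective. Your fallback recurrence route is also underspecified in the one place that matters: conditioning on the position of the last zero or on the value $e_n$ forces you to carry an auxiliary parameter (the current maximum positive value), and you would still have to produce and solve the resulting bivariate recurrence; you do not indicate how.

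For comparison, the paper's proof is neither of your two routes in the form you describe. It is a generating-function argument: writing $E(x)=\sum_{n\ge 1}|\I_n(021)|x^n$ and conditioning on the \emph{last position $j+1$ at which the entry attains its maximal possible value $j$}, one gets the clean decomposition $E(x)=x+xE(x)+E^2(x)$ (equation \eqref{gf021}), because the prefix $(e_1,\dots,e_j)$ is an arbitrary element of $\I_j(021)$ and the suffix, after shifting its positive entries down by $j-1$ and prepending a $0$, is an arbitrary element of $\I_{n-j}(021)$ by Observation \ref{observation:021}. Solving gives $E(x)=xR(x)$. The choice of \emph{which} statistic to condition on is exactly the idea your proposal is missing; with the wrong choice the decomposition does not factor as a product of two independent copies of the same class. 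The explicit bijection $\rho$ with Schr\"oder paths is then read off from this recursive decomposition afterwards, rather than built by a left-to-right scan.
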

\begin{proof}  
Let $E(x) = \sum_{n=1}^{\infty} |\I_n(021)| x^n$.
We show that $E(x)$ satisfies
\begin{equation}
E(x) = x + xE(x) + E^2(x),
\label{gf021}
\end{equation}
which has solution
\[
E(x) = \frac{1-x-\sqrt{x^2-6x+1}}{2} = xR(x),
\]
and the result will follow from \eqref{eqn:Rgf}.

Given $e \in \I_n(021)$, consider the last position $j+1$ such that $e_{j+1}$ attains its maximal value $j$.
If $j$=0, then either $e=(0)$ or $e=0 \cdot e'$ for some $e' \in \I_{n-1}(021)$.  This accounts for the  $x+xE(x)$ in
\eqref{gf021}.  We show that the $e$ for which $j>0$ are counted by $E^2(x)$.

If $j > 0$, then $(e_1, \ldots, e_j) \in \I_j(021)$.   As for $(e_{j+2}, \ldots, e_n)$, we know that for $i = 2, \ldots, n-j$, by Observation~\ref{observation:021} and by choice of $j$, either $e_{j+i}=0$ or
$$j \leq e_{j+i} < j+i-1.$$
It follows that subtracting $j-1$ from the positive entries of $(e_{j+2}, \ldots, e_n)$ and adding a $0$ to the beginning of the sequence gives an element of $\I_{n-j}(021)$.  Conversely, for any sequences $(e_1, \ldots, e_j) \in \I_j(021)$ and $f\in \I_{n-j}(021)$,  if we add $j-1$ to all the positive elements  of $f$ and remove the initial 0 of $f$, we can append the result to $(e_1, \ldots, e_{j},j)$ to obtain an element of
$\I_n(021)$  in which $ j+1$ is the last position whose entry attains its maximal value.
\end{proof}

We recall the operation $\sigma_k$ defined in Section 1 to 
 facilitate describing a bijection. If $e$ is an inversion sequence, and $(e_i,e_{i+1}, \ldots, e_j)$ is a substring of $e$ in which all positive entries are larger than $k$, then $\sigma_{-k}(e_i,e_{i+1}, \ldots, e_j)$ is the sequence obtained by subtracting $k$ from the positive entries of $(e_i,e_{i+1},\ldots,e_j)$.

The proof of Theorem \ref{theorem:021} gives rise to the following recursive bijection $\rho$ from $\I_n(021)$ to Schr\"oder $(n-1)$-paths.  For $e \in \I_n(021)$,  let $j+1$ be the last position such that $e_{j+1}$ attains its maximal value $j$.  Set $\rho(0)$ to be the empty path.  Then the Schr\"oder path $p = \rho(e)$ is defined by
\[
\rho(e) = \left \{
\begin{array}{ll}
F \rho(e_2, \ldots, e_n) & {\rm if}  \ j=0\\
U \rho(e_1, \ldots, e_{j}) D \rho(\sigma_{1-j}(0 \cdot (e_{j+2}, \ldots, e_n))) & {\rm otherwise}.
\end{array}
\right .
\]
For example, $p= \rho(0,1,0,1,0,2,5,7,7,7,9,0,10,11,12)$ is the Schr\"oder 14-path
 shown in Figure \ref{Schroeder Path}.  By definition, $\rho$ gives the following result.
\begin{theorem}
For $n \geq 1$,
the number of  $021$-avoiding inversion sequences of length $n$ with $k$  maximal elements is the same as the number of  Schr\"oder $(n-1)$-paths   with $k-1$ initial up steps.  
\end{theorem}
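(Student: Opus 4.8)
The plan is to prove the stronger statement that the recursive bijection $\rho$ of Theorem~\ref{theorem:021} carries the statistic ``number of maximal elements'' on $\I_n(021)$ to the statistic ``number of initial up steps, plus one'' on Schr\"oder $(n-1)$-paths. Call an entry $e_i$ of $e=(e_1,\ldots,e_n)$ \emph{maximal} if $e_i=i-1$; since $e_1=0$ is always maximal, the two statistics differ by exactly $1$, which is the source of the ``$k$'' versus ``$k-1$'' in the statement. Write $m(e)$ for the number of maximal elements of $e$ and $u(p)$ for the number of up steps at the very start of a Schr\"oder path $p$ (so $u=0$ for the empty path and for any path beginning with $F$). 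The goal is the identity $u(\rho(e))=m(e)-1$ for all $e\in\I_n(021)$, which I would prove by induction on $n$.

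For $n=1$ we have $e=(0)$, $m(e)=1$, and $\rho(e)$ is the empty path with $u=0$. For the inductive step, take $e\in\I_n(021)$ with $n\ge 2$ and let $j+1$ be the last position at which $e$ attains its maximal value $j$. If $j=0$ then $e_i<i-1$ for every $i\ge 2$, so $e_1$ is the only maximal element and $m(e)=1$, while $\rho(e)=F\,\rho(e_2,\ldots,e_n)$ begins with a flat step, so $u(\rho(e))=0=m(e)-1$. If $j>0$ then $(e_1,\ldots,e_j)\in\I_j(021)$ and, by definition of $\rho$,
\[
\rho(e)=U\,\rho(e_1,\ldots,e_j)\,D\,\rho\bigl(\sigma_{1-j}(0\cdot(e_{j+2},\ldots,e_n))\bigr).
\]
Because $j+1$ is the \emph{last} maximal position, $e_i<i-1$ for all $i>j+1$; hence the maximal elements of $e$ are exactly those of $(e_1,\ldots,e_j)$ together with position $j+1$, so $m(e)=m(e_1,\ldots,e_j)+1$. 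On the path side, the initial run of up steps of $U\,P\,D\,Q$ is the leading $U$ followed by the initial run of up steps of $P$ (the step immediately after that run --- whether the first non-up step of $P$ or the displayed $D$ --- is never an up step), so $u(\rho(e))=1+u(\rho(e_1,\ldots,e_j))$. By the induction hypothesis applied to $(e_1,\ldots,e_j)$, this equals $1+\bigl(m(e_1,\ldots,e_j)-1\bigr)=m(e)-1$, completing the induction. Finally, since $\rho$ is a bijection between $\I_n(021)$ and the set of Schr\"oder $(n-1)$-paths, the identity $u(\rho(e))=m(e)-1$ gives the theorem.

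The proof is essentially bookkeeping once the bijection $\rho$ is in hand, so I do not expect a genuine obstacle; the only points that need care are the small edge cases (the empty path, and the case $j=1$ in which $(e_1,\ldots,e_j)=(0)$ maps to the empty path) and the elementary identity $u(U\,P\,D\,Q)=1+u(P)$, which I would state and use explicitly so that the passage from $\rho(e_1,\ldots,e_j)$ to $\rho(e)$ is airtight.
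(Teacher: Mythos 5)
Your proof is correct and matches the paper's approach: the paper simply asserts that the result follows ``by definition'' of the recursive bijection $\rho$, and your induction establishing $u(\rho(e))=m(e)-1$ is exactly the bookkeeping that assertion leaves implicit. The edge cases you flag (empty path, $j=1$) and the identity $u(U\,P\,D\,Q)=1+u(P)$ are handled correctly.
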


The number of Schr\"oder $(n-1)$-paths with $k$ initial up steps is counted by sequence A132372 in the OEIS \cite{Sloane}.

Ira Gessel considered in  \cite{gessel}   the generating function
\[
R(x,z) = \sum_{n=0}^{\infty} \sum_{p \in R_n} x^nz^{{\rm flat}(p)},
\]
where ${\rm flat}(p)$ is the number of flat steps in a Schr\"oder path $p$.  He showed that
\begin{equation}
R(x,z) = \frac{1-xz-\sqrt{(1-xz)^2-4x}}{2x}.
\label{eqn:Rzgf}
\end{equation}
Similarly define
\[
E(x,z) = \sum_{n=1}^{\infty} \sum_{e \in \I_n(021)} x^nz^{{\rm zeros}(e)},
\]
where ${\rm zeros}(e)$ is the number of zeros in the inversion sequence $e$.
Following the  proof  of Theorem  \ref{theorem:021},  we have
\[
E(x,z) = xz+xzE(x,z) + E^2(x,z)/z.
\]
Solving, and comparing to \eqref{eqn:Rzgf}  we have
\[
E(x,z) = \frac{z(1-xz-\sqrt{(1-xz)^2-4x})}{2} = xzR(x,z).
\]
This proves the following.
\begin{theorem}
For $n \geq 1$, the number of  $021$-avoiding inversion sequences of length $n$ with $k$ zeros is the same as the number of  Schr\"oder $(n-1)$-paths with $k-1$ flat steps.
\end{theorem}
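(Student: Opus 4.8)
The plan is to mimic, with a $z$-refinement, the generating-function argument already used in the proof of Theorem~\ref{theorem:021}, and then match the resulting functional equation against Gessel's formula \eqref{eqn:Rzgf} for $R(x,z)$. First I would set up the bivariate series $E(x,z)=\sum_{n\ge 1}\sum_{e\in\I_n(021)} x^n z^{{\rm zeros}(e)}$ and revisit the recursive decomposition of a $021$-avoiding inversion sequence $e$ according to the last position $j+1$ at which $e_{j+1}$ attains its maximal value $j$, exactly as in the proof of Theorem~\ref{theorem:021}. The point is simply to track how the number of zeros behaves under that decomposition.

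Next I would account for the zeros in each branch. When $j=0$: the singleton $(0)$ contributes $xz$, and a sequence $0\cdot e'$ with $e'\in\I_{n-1}(021)$ has exactly one more zero than $e'$ does, contributing $xz\,E(x,z)$. When $j>0$, the sequence splits into a left part $(e_1,\dots,e_j)\in\I_j(021)$, the fixed maximal entry $e_{j+1}=j$ (which is positive since $j>0$, so it contributes no zero), and a right part which corresponds, after $\sigma_{1-j}$ and prepending a $0$, to an element $f\in\I_{n-j}(021)$. The zeros of $e$ among positions $j+2,\dots,n$ are precisely the zeros of $f$ other than the artificially added initial $0$; so the right block contributes a factor $E(x,z)/z$ rather than $E(x,z)$. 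Combining the two halves of the $j>0$ case multiplicatively gives $E^2(x,z)/z$, and altogether
\[
E(x,z) = xz + xz\,E(x,z) + \frac{E^2(x,z)}{z}.
\]

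Then I would solve this quadratic for $E(x,z)$, choosing the branch that vanishes as $x\to 0$, obtaining
\[
E(x,z) = \frac{z\bigl(1-xz-\sqrt{(1-xz)^2-4x}\bigr)}{2} = xz\,R(x,z)
\]
by direct comparison with \eqref{eqn:Rzgf}. Extracting the coefficient of $x^n z^k$ from $E(x,z)=xz\,R(x,z)$ shows that the number of $021$-avoiding inversion sequences of length $n$ with $k$ zeros equals the coefficient of $x^{n-1}z^{k-1}$ in $R(x,z)$, which by definition of $R(x,z)$ is the number of Schr\"oder $(n-1)$-paths with $k-1$ flat steps. That is the claimed statement.

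The only real thing to be careful about — the "main obstacle," though it is more a bookkeeping subtlety than a genuine difficulty — is the precise accounting of the initial $0$ in the right block: the bijection of Theorem~\ref{theorem:021} deliberately inserts a $0$ at the front of $f\in\I_{n-j}(021)$ that does not correspond to any entry of $e$, so one must divide by $z$ exactly once to cancel that spurious zero, and confirm that the $\sigma_{1-j}$ shift touches only positive entries and hence neither creates nor destroys zeros. Once that is pinned down, verifying that the stated closed form indeed satisfies the quadratic and agrees with Gessel's $R(x,z)$ is a routine algebraic check.
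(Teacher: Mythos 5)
Your proposal is correct and follows essentially the same route as the paper: the paper also refines the decomposition from Theorem~\ref{theorem:021} to the bivariate series $E(x,z)$, arrives at the same functional equation $E(x,z)=xz+xzE(x,z)+E^2(x,z)/z$ (with the $1/z$ accounting for the artificially prepended zero, exactly as you explain), and concludes by solving and comparing with Gessel's formula \eqref{eqn:Rzgf}. Your write-up simply makes the zero-bookkeeping more explicit than the paper does.
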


Note that the number of Schr\"oder $(n-1)$-paths with $k-1$ flat steps is equal to the number of Schr\"oder $(n-1)$-paths with $k-1$ peaks (where a peak is an occurrence of $UD$).  This can be seen by applying an involution that replaces every $UD$ with $F$ and every $F$ with $UD$.  This gives us the following.

\begin{corollary}
For $n \geq 1$, the number of  $021$-avoiding inversion sequences of length $n$ with $k$ zeros is the same as the number of  Schr\"oder $(n-1)$-paths with $k-1$ peaks.
\end{corollary}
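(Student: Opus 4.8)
The plan is to prove the corollary by exhibiting a bijection on Schr\"oder $(n-1)$-paths that transforms the flat-step statistic into the peak statistic, and then invoke the preceding theorem identifying $021$-avoiding inversion sequences of length $n$ with $k$ zeros with Schr\"oder $(n-1)$-paths with $k-1$ flat steps. Concretely, I would define an involution $\Phi$ on $R_m$ (with $m = n-1$) that simultaneously replaces every occurrence of $F$ by $UD$ and every occurrence of the consecutive pair $UD$ by $F$, leaving all other steps untouched. The first task is to check that $\Phi$ is well-defined, i.e. that the output word is again a legal Schr\"oder path: swapping $F \leftrightarrow UD$ locally does not change the net height displacement of the affected block (both $F$ and $UD$ have displacement $0$) and does not push the path below the $x$-axis, since an $F$ step sits at some height $h \geq 0$ and the replacement $UD$ visits heights $h, h+1, h$, all nonnegative; conversely a peak $UD$ occurring at heights $h, h+1, h$ is replaced by an $F$ at height $h$.

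The second task is to confirm that $\Phi$ is an involution and that it sends paths with $k-1$ flat steps to paths with $k-1$ peaks (and conversely). For the involution property, one must be slightly careful that the $F$-to-$UD$ and $UD$-to-$F$ substitutions are performed ``in parallel'' on the original path rather than sequentially, so that a newly created $UD$ is not immediately re-collapsed; the cleanest formulation is to decompose a Schr\"oder path uniquely into maximal blocks, singling out each $F$ step and each peak $UD$ as an atomic unit, and then toggle each such unit. Under this description $\Phi \circ \Phi$ is visibly the identity, and the number of $F$'s in $\Phi(p)$ equals the number of peaks of $p$ while the number of peaks of $\Phi(p)$ equals the number of $F$'s of $p$. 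Hence $\Phi$ restricts to a bijection between $\{p \in R_m : \mathrm{flat}(p) = k-1\}$ and $\{p \in R_m : p \text{ has } k-1 \text{ peaks}\}$, so these two sets are equinumerous.

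Combining: by the theorem just proved, $|\{e \in \I_n(021) : \mathrm{zeros}(e) = k\}| = |\{p \in R_{n-1} : \mathrm{flat}(p) = k-1\}|$, and by the bijection $\Phi$ this equals $|\{p \in R_{n-1} : p \text{ has } k-1 \text{ peaks}\}|$, which is the claim.

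The main obstacle I anticipate is not any deep idea but the precise bookkeeping of the involution: one has to make sure that the simultaneous replacement is unambiguous (a given $U$ step can only be the start of a peak if it is immediately followed by $D$, and these peaks are automatically disjoint, so there is no overlap), and that boundary effects — a peak at the very start or end of the path, or consecutive $F$'s — are handled uniformly by the ``block decomposition'' viewpoint. Once the involution is set up carefully the rest is immediate, so essentially the whole proof is the observation already sketched in the paragraph preceding the corollary, made rigorous.
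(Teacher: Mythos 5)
Your proposal is correct and follows exactly the paper's argument: the paper also deduces the corollary from the preceding theorem (zeros $\leftrightarrow$ flat steps) together with the involution that simultaneously swaps every $F$ with $UD$ and every peak $UD$ with $F$. Your additional care about performing the substitutions in parallel via a block decomposition is a reasonable elaboration of the same idea, not a different route.
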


In Section 4 we provide a second bijection  relating different statistics in 021-avoiding inversion sequences and Schr\"oder paths.
We also show a correspondence with certain trees counted by the Schr\"oder numbers and use this to prove that the ascent statistic on 021-avoiding inversion sequences is symmetric.

\subsection{The patterns {\bf 201} and {\bf 210}}
 %\#4306:  1,2,6,24,118,674,4306}
 
 In this section we prove that the patterns $201$ and $210$ are Wilf equivalent on inversion sequences.
 This has also been shown in \cite{Mansour}.  The avoidance sequence for these patterns did not appear in the OEIS, (it has now been assigned A263777)  but we derive a recurrence to compute it.

For $e \in \I_n$, call position $j$ a {\em weak left-to-right maximum} if $e_i \leq e_j$ for all
$1 \leq i < j$.

\begin{observation} The $210$-avoiding inversion sequences are precisely those
that can be partitioned into two weakly increasing subsequences. 
\label{observation:210}
\end{observation}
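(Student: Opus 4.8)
The plan is to prove both directions directly, using the definition of the pattern $210$ and the order-isomorphism underlying it. First I would fix notation: for $e \in \I_n$, an occurrence of $210$ is a triple of positions $i_1 < i_2 < i_3$ with $e_{i_1} > e_{i_2} > e_{i_3}$, since the reduction of any strictly decreasing triple is $210$. So $e$ avoids $210$ exactly when it contains no strictly decreasing subsequence of length three, i.e.\ when the longest strictly decreasing subsequence of $e$ has length at most $2$.

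For the easy direction, suppose $e$ is the union of two weakly increasing subsequences, say on index sets $A$ and $B$ with $A \cup B = [n]$. If $i_1 < i_2 < i_3$ were an occurrence of $210$, then by pigeonhole two of $i_1, i_2, i_3$ lie in the same part, say both in $A$; but then those two entries are simultaneously strictly decreasing (as part of the $210$ occurrence) and weakly increasing (as part of a weakly increasing subsequence), a contradiction. Hence $e$ avoids $210$.

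For the converse, suppose $e$ avoids $210$, so $e$ has no strictly decreasing subsequence of length $3$. The natural tool here is a Dilworth / Mirsky-type argument, or equivalently a greedy colouring: colour position $j$ with colour $1$ if $e_j$ is \emph{not} preceded by a strictly smaller entry at an earlier position for which\ldots --- more cleanly, define the colour of position $j$ to be the length of the longest strictly decreasing subsequence of $e$ ending at position $j$. By hypothesis this length is always $1$ or $2$, so we get a $2$-colouring of the positions. I claim each colour class, read left to right, is weakly increasing: if $i < j$ have the same colour $c$ and $e_i > e_j$, then appending position $j$ to a longest strictly decreasing subsequence ending at $i$ produces a strictly decreasing subsequence of length $c+1$ ending at $j$, forcing the colour of $j$ to be at least $c+1 > c$, a contradiction. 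Thus the two colour classes are the desired weakly increasing subsequences, and $e$ can be partitioned into two weakly increasing subsequences.

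The main (and only real) obstacle is getting the colouring in the converse direction stated so that the monotonicity of each class is immediate; the ``longest strictly decreasing suffix-subsequence'' statistic handles this cleanly, and the pattern-avoidance hypothesis is precisely what bounds it by $2$. Everything else is bookkeeping with the definitions of reduction and of $\I_n$.
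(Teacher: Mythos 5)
Your proof is correct and is essentially the paper's argument: the forward direction is the same pigeonhole observation, and in the converse your colour class $1$ (positions where the longest strictly decreasing subsequence ending there has length $1$) is exactly the set of weak left-to-right maxima that the paper uses, with the same verification that the remaining positions form a weakly increasing subsequence.
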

\begin{proof}
Suppose $e$ has such a partition $e_{a_1} \leq e_{a_2} \leq \cdots \leq e_{a_t}$ and
$e_{b_1} \leq e_{b_2} \leq \cdots \leq e_{b_{n-t}}$. 
 If there exists $i<j<k$ such that $e_i > e_j > e_k$, then no two of $i,j,k$ can both be in  $\{a_1, \ldots, a_t\}$ or both be in  $\{b_1, \ldots, b_{n-t}\}$, so $e$ avoids $210$. Conversely, if $e$ avoids $210$, let $a=(a_1, \ldots, a_t)$ be the sequence of weak left-to-right maxima of $e$.
Then $e_{a_1} \leq e_{a_2} \leq \cdots \leq e_{a_t}$.  Consider $i,j \not\in \{a_1, \ldots, a_t\}$ where $i<j$.  The fact that $e_i$ is not a weak left-to-right maxima implies there exists some $e_s$ such that $s<i$ and $e_s>e_i$.  Thus to avoid 210, we must have $e_i \leq e_j$.
\end{proof}

\begin{observation}
Let $(e_1,e_2,\ldots,e_n) \in \I_n$.  Additionally, for any $i \in [n]$, let $M_i=\max(e_1,e_2,\ldots,e_{i-1})$.  Then $e \in \I_n(201)$ if and only if for every $i \in [n]$, the entry $e_i$ is a weak left-to-right maximum, or for every $j>i$, we have $e_j \leq e_i$ or $e_j>M_i$.

\label{observation:201}
\end{observation}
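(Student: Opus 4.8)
The plan is to prove the equivalence by unwinding each side into a statement about triples of positions and matching them up. The key is the role assignment inside a $201$-occurrence: if $i_1<i_2<i_3$ realizes the pattern $201$ in $e$, then, since the word $201$ has three distinct letters and hence reduces to itself, the entries $e_{i_1},e_{i_2},e_{i_3}$ are pairwise distinct with $e_{i_2}<e_{i_3}<e_{i_1}$; so the first position carries the largest value, the last the intermediate value, and the middle the smallest value.

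For ``the displayed condition implies avoidance'' I would argue contrapositively. Given an occurrence $i_1<i_2<i_3$ of $201$, set $i:=i_2$. Since $i_1<i$ and $e_{i_1}>e_{i_2}=e_i$, the entry $e_i$ is not a weak left-to-right maximum, and $M_i\ge e_{i_1}$. Taking $j:=i_3>i$ gives $e_i=e_{i_2}<e_{i_3}=e_j<e_{i_1}\le M_i$, so this $j$ satisfies $e_j>e_i$ and $e_j<M_i$; hence the clause ``$e_j\le e_i$ or $e_j$ exceeds $M_i$'' fails, and the displayed condition fails at $i$. This is exactly the contrapositive of the forward implication.

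For the converse I would show that any failure of the condition forces a $201$-occurrence. Suppose the condition fails at position $i$. Then $e_i$ is not a weak left-to-right maximum, so some earlier entry exceeds $e_i$; choose $i'<i$ with $e_{i'}=M_i$, whence $M_i>e_i$. The failure also supplies $j>i$ with $e_i<e_j$ and with $e_j$ not exceeding $M_i$. I then want $i'<i<j$ to be a $201$-occurrence, i.e.\ $e_{i'}=M_i>e_j>e_i$; the right inequality is given, and the left one holds as long as $e_j\ne M_i$.

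The step I expect to be the main obstacle is exactly the boundary case $e_j=M_i$, in which $e_j$ ties the earlier maximum rather than dropping strictly below it, so $i',i,j$ need not form a $201$ on their own — and, as the sequence $(0,1,0,1)$ shows, no $201$ need be present at all. This is the one place where the precise comparison between $e_j$ and $M_i$ is load-bearing: the values of $e_j$ that must be excluded for a later index $j$ are exactly those lying strictly between $e_i$ and $M_i$, so the inner clause should be read as ``$e_j\le e_i$ or $e_j\ge M_i$''. With that settled, the rest is routine, including the degenerate position $i=1$ (where $M_1$ is empty and $e_1$ is vacuously a weak left-to-right maximum); one could even merge the two clauses of the statement, since when $e_i$ is a weak left-to-right maximum the inner clause holds automatically, leaving the single requirement that for all $i<j$, $e_j\le e_i$ or $e_j\ge M_i$.
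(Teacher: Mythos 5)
Your argument follows the same route as the paper's proof: for one direction you take $i$ to be the middle position of a $201$-occurrence and $j$ its last position, exactly as in the paper's proof by contradiction; for the other you pick a witness $i'<i$ achieving $M_i$ and try to close a $201$-occurrence with $i'<i<j$. The substantive point is the one you isolate at the end: the clause ``$e_j>M_i$'' in the statement should be ``$e_j\geq M_i$''. As written, the ``only if'' direction is false, and your counterexample $(0,1,0,1)$ is valid: it avoids $201$ (it has only two distinct values), yet at $i=3$, $j=4$ one has $e_4=1>e_3=0$ and $e_4=1\not>M_3=1$. Notably, the paper's own proof of the converse concludes ``$e_i\geq e_j$ or $e_j\geq M_i$'', i.e.\ it establishes the weak-inequality version and silently contradicts the displayed statement. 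With $\geq$ in place of $>$ both directions go through exactly as you and the paper argue; the forward direction is unaffected, since a $201$-occurrence at $i_1<i_2<i_3$ forces $e_{i_3}<e_{i_1}\leq M_{i_2}$ and hence $e_{i_3}<M_{i_2}$. So your proposal is correct, takes the paper's approach, and additionally catches a genuine off-by-one error in the statement that the paper's proof glosses over.
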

\begin{proof}
Let $e \in \I_n$ satisfy the conditions of Observation \ref{observation:201} and, to obtain a contradiction, assume there exist $i<j<k$ such that $e_ie_je_k$ forms a 201 pattern (i.e. $e_j<e_k<e_i$).  Notice that $M_j=\max\{e_1,e_2,\ldots,e_{j-1}\} \geq e_i$.  It follows that $M_j>e_k>e_j$, which contradictions our assumption.

Conversely, if $(e_1,e_2,\ldots,e_n) \in \I_n(201)$, consider any $e_i$.  If $e_i$ is not a weak left-to-right maximum, then there exists some maximum value $M_i=e_s$ such that $s<i$ and $e_s>e_i$.  Therefore, in order to avoid a 201 pattern, any $e_j$ where $j>i$ must have $e_i \geq e_j$ or $e_j \geq M_i=e_s$.
\end{proof}

\begin{theorem}
For $n \geq 1$, $|\I_n(210)|=|\I_n(201)|$.
\label{210eq201}
\end{theorem}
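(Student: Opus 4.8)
The plan is to show that the avoiders of $210$ and the avoiders of $201$ are produced by the \emph{same} succession rule, so that the two classes have isomorphic generating trees and hence $|\I_n(210)| = |\I_n(201)|$; unwinding the isomorphism also yields an explicit bijection, and the rule itself is essentially the recurrence recorded in Theorem~\ref{rec:210ab}. Concretely, I attach to each $e \in \I_m$ in either class a label $(m,M,c)\in\mathbb{Z}^3$ and prove that in both cases a node of label $(m,M,c)$ has exactly the children of labels $(m+1,\,M+t,\,c+t)$ for $0\le t\le m-M$, together with $(m+1,\,M,\,k)$ for $1\le k\le c$. Since both roots are $(1,0,0)$, this settles the claim.

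For $210$-avoiders I set $M=\max e$, let $\beta$ be the largest entry of $e$ that is \emph{not} a weak left-to-right maximum ($\beta=0$ if there is none), and put $c=M-\beta$. By Observation~\ref{observation:210} (equivalently: $e$ avoids $210$ iff the subsequence of its non-weak-left-to-right-maxima is weakly increasing), $e\cdot x$ avoids $210$ iff $x\ge\beta$, so the admissible $x\in\{0,\dots,m\}$ are those in $[\beta,M-1]$ together with those in $[M,m]$. Appending $x=M+t\in[M,m]$ makes $x$ a new weak maximum, leaves $\beta$ fixed, and sends $(M,c)\mapsto(M+t,c+t)$; appending the $k$-th smallest admissible value below $M$, namely $x=\beta+k-1$, makes $x$ the new $\beta$ and sends $(M,c)\mapsto(M,\,c-k+1)$. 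Re-indexing these last moves by $k\mapsto c-k+1$ gives exactly the succession rule above.

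For $201$-avoiders I set $M=\max e$, let $F=\bigcup_\ell\{e_\ell+1,\dots,M_\ell-1\}$ be the union over the dip positions $\ell$ (those with $e_\ell<M_\ell:=\max(e_1,\dots,e_{\ell-1})$), put $S=\{0,\dots,M-1\}\setminus F$, and set $c=|S|$. Using Observation~\ref{observation:201} one checks that $e\cdot x$ creates a $201$-pattern iff there are $i<j\le m$ with $e_i>x>e_j$, and that this happens exactly when $x\in F$; hence the admissible $x$ are those in $S$ together with those in $[M,m]$. Appending $x=M+t\in[M,m]$ keeps $F$ unchanged (a record is never a dip and dominates every earlier entry, so it creates no $201$) and enlarges $S$ by the block $\{M,\dots,M+t-1\}$, i.e.\ $(M,c)\mapsto(M+t,c+t)$; appending the $k$-th smallest element $x$ of $S$ adjoins $\{x+1,\dots,M-1\}$ to $F$, which truncates $S$ to $\{s\in S:s\le x\}$, a set of size exactly $k$, i.e.\ $(M,c)\mapsto(M,k)$. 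This is the same succession rule, the trees coincide, and $|\I_n(210)|=|\I_n(201)|$.

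The crux is the $201$ side. For $210$-avoiders the forbidden next-entry set is always an initial block $[0,\beta)$, so a three-parameter label is plainly enough; but for $201$-avoiders the forbidden set $F$ can genuinely fail to be an interval — already at $n=6$, e.g.\ $e=(0,0,2,0,4,2)$ gives $F=\{1,3\}$ — so one must carry $S$ only through its cardinality $c$ and check that the recursion truly depends on nothing finer. The two facts I would need to establish carefully are: (i) appending a value $\ge M$ never creates a $201$ and simply appends the block $[M,M+t-1]$ to $S$; and (ii) a dip to the $k$-th smallest surviving value deletes everything above it from $S$, leaving precisely the first $k$ survivors, regardless of the gaps inside $S$. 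Extracting the admissibility test "$x\notin F$" from Observation~\ref{observation:201} and verifying these two update rules is where essentially all the work lies.
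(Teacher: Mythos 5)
Your argument is correct, and it takes a genuinely different route from the paper. The paper proves Theorem~\ref{210eq201} by an explicit one-shot bijection built on the same two observations: given $e\in\I_n(210)$, it keeps the weak left-to-right maxima in place and in value, and then greedily reassigns the multiset of remaining entries to the remaining positions (at each such position, the largest unused value below the running maximum), producing an $f$ satisfying the characterization of Observation~\ref{observation:201}; this map fixes both the positions of the weak left-to-right maxima and the multiset of entries, which is the structural refinement that bijection buys. You instead exhibit a common succession rule for the two generating trees, with label $(m,M,c)$ where $c=M-\beta$ on the $210$ side and $c=|S|$ on the $201$ side, and your verifications check out: the admissibility test $x\ge\beta$ for $210$, the identification of the forbidden set $F$ for $201$ (your example $e=(0,0,2,0,4,2)$ correctly shows $F$ need not be an interval, which is exactly why one must check that only $|S|$ matters), and the two update rules $(M,c)\mapsto(M+t,c+t)$ and $(M,c)\mapsto(M,k)$ all hold; note also that your convention $\beta=0$ when every entry is a weak left-to-right maximum harmlessly conflates that case with ``largest non-maximum equals $0$,'' since both yield the same children. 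What your approach buys is the refined equidistribution of $(\max e,\,c)$ over the two classes and, as you observe, an immediate derivation of a transfer-matrix recurrence in the spirit of Theorem~\ref{rec:210ab} (your $(M,c)$ is essentially the paper's $(\tp e,\,\tp e-\bottom e)$); the price is that the induced bijection is only recursive rather than explicit. Either write-up would be acceptable; if you flesh yours out, the two facts you flag as (i) and (ii) are indeed the only places where real checking is needed, and both are short.
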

\begin{proof}
We exhibit a bijection based on the characterizations in Observations \ref{observation:210} and \ref{observation:201}.

Given $e \in \I_n(210)$, define $f \in \I_n(201)$ as follows.
Let $e_{a_1} \leq e_{a_2} \leq \cdots \leq e_{a_t}$ be the sequence of  weak left-to-right maxima of $e$ and let
$e_{b_1} \leq e_{b_2} \leq \cdots \leq e_{b_{n-t}}$ be the subsequence of remaining elements of $e$.

For $i= 1, \ldots, t$, set $f_{a_i}=e_{a_i}$.  For each $j=1,2, \ldots, n-t$, we extract an element of the multiset $B=\{e_{b_1}, e_{b_2},\ldots, e_{b_{n-t}}\}$ and assign it to $f_{b_1}, f_{b_2},\ldots, f_{b_{n-t}}$ as follows: 
\[
f_{b_j} = {\rm max}\{k \ | \ k \in B - \{f_{b_1}, f_{b_2},\ldots, f_{b_{j-1}}\} \ {\rm and} \
k < {\rm max}(e_1, \ldots, e_{{b_j}-1}) \}.
\]
By definition, $f$ will satisfy the characterization property in Observation \ref{observation:201} of $\I_n(210)$.
\end{proof}

In order to get a recurrence that allows us to compute a few terms of $|\I_n(210)|=|\I_n(201)|$, we define two statistics, $\tp$ and $\bottom$, on $e \in \I_n(210)$ based on the decomposition of $e$ described in Observation \ref{observation:210}.
Let $e_{a_1} \leq e_{a_2} \leq \cdots \leq e_{a_t}$ be the sequence of  weak left-to-right maxima of $e$ and let
$e_{b_1} \leq e_{b_2} \leq \cdots \leq e_{b_{n-t}}$ be the subsequence of remaining elements of $e$. Then
\[
\tp (e) = e_{a_t}; \ \ \ \bottom (e) = e_{b_{n-t}}.
\]
If $e$ is weakly increasing, then $t=n$ 
so we define $\bottom (e)=-1$.

\begin{theorem}
Let $T_{n,a,b}$ be the number of $e \in \I_n(201)$ with $\tp (e) = a$ and  $\bottom (e)= b$.
Then
\begin{equation}
T_{n,a,b} = \sum_{i=-1}^{b}T_{n-1,a,i} +  \sum_{j=b+1}^{a}T_{n-1,j,b},
\label{Tnab:recurrence}
\end{equation}
with initial conditions 
$T_{n,a,b}=0$ if $a \geq n$ and $T_{n,a,-1}= \frac{n-a}{n}{n-1+a \choose a}$.
\label{rec:210ab}
\end{theorem}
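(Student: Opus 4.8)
The statistics $\tp$ and $\bottom$ are defined in the paragraph preceding the theorem only on $\I_n(210)$, whereas the theorem counts over $\I_n(201)$, so the first thing to do is reconcile these. For an arbitrary $e\in\I_n$ I read $\tp(e)$ as the value of its largest weak left-to-right maximum (equivalently $\max(e)$) and $\bottom(e)$ as the largest value occurring at a position that is \emph{not} a weak left-to-right maximum (and $-1$ if no such position exists); on $\I_n(210)$ the non-maxima are weakly increasing by Observation~\ref{observation:210}, so this largest value is the last one, $e_{b_{n-t}}$, matching the paper. I would then show that the bijection $\Phi\colon\I_n(210)\to\I_n(201)$ of Theorem~\ref{210eq201} preserves \emph{both} statistics. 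By construction $\Phi$ leaves the values at the weak-left-to-right-maximum positions of $e$ unchanged and only rearranges the multiset $B$ of values at the remaining positions. A short check using the defining inequality $f_{b_j}<\max(e_1,\dots,e_{b_j-1})$ shows that each former weak-left-to-right maximum $a_i$ stays one (every earlier entry of $f$ is $\le f_{a_i}=e_{a_i}=\max(e_1,\dots,e_{a_i})$) and each former non-maximum $b_j$ stays one (the prefix-maximum witness $p<b_j$ still satisfies $f_p=e_p>f_{b_j}$). Hence $\Phi$ preserves the weak-left-to-right-maximum positions, the global maximum, and the set of non-maximum values, so $\tp$ and $\bottom$ are preserved and the count over $\I_n(201)$ equals the count over $\I_n(210)$. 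It therefore suffices to establish \eqref{Tnab:recurrence} and the initial conditions for the analogous count over $\I_n(210)$.

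For $b\ge 0$ I would prove \eqref{Tnab:recurrence} by deleting the last entry of $e\in\I_n(210)$ with $\tp(e)=a$ and $\bottom(e)=b$, setting $e'=(e_1,\dots,e_{n-1})\in\I_{n-1}(210)$, and splitting on whether position $n$ is a weak left-to-right maximum. If $e_n<\max(e')$ then $n$ is a non-maximum, the global maximum is unchanged so $\tp(e')=a$, and 210-avoidance (Observation~\ref{observation:210}) forces $e_n$ to be at least the last, hence largest, non-maximum value of $e'$, i.e.\ $e_n\ge\bottom(e')$; then $e_n$ becomes the new largest non-maximum, so $b=e_n$ while $\bottom(e')=i$ is free subject to $-1\le i\le b$. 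This correspondence is reversible (append $e_n=b$), giving the first sum $\sum_{i=-1}^{b}T_{n-1,a,i}$. If instead $e_n\ge\max(e')$ then $n$ is a weak left-to-right maximum, $e_n=\max(e)=a$, the non-maxima are untouched so $\bottom(e)=\bottom(e')=b$, and $\tp(e')=j$ ranges over $b+1\le j\le a$ (it must strictly exceed $\bottom(e')=b$ and be at most $a=e_n$); appending $e_n=a$ is again reversible, giving the second sum $\sum_{j=b+1}^{a}T_{n-1,j,b}$. The two cases are disjoint and exhaustive, proving \eqref{Tnab:recurrence}.

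For the initial conditions, every entry of $e\in\I_n$ satisfies $e_i\le i-1\le n-1$, so $\tp(e)=\max(e)\le n-1$ and $T_{n,a,b}=0$ whenever $a\ge n$. The value $\bottom(e)=-1$ singles out the weakly increasing sequences, so $T_{n,a,-1}$ counts weakly increasing inversion sequences $0\le e_1\le\cdots\le e_n$ with $e_i\le i-1$ and $e_n=a$. These are exactly the monotone lattice paths lying weakly below the staircase and ending at height $a$, so their number is the ballot number $\frac{n-a}{n}\binom{n-1+a}{a}$; I would confirm this either by the cycle lemma or by verifying that the right-hand side satisfies the ballot recurrence $B(n,a)=\sum_{a'=0}^{a}B(n-1,a')$ obtained by summing over the penultimate value $e_{n-1}$, with base case $B(1,0)=1$. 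Note that these edge values interact correctly with the main recurrence: when $a=n-1$ the global maximum can only sit at position $n$, and indeed the first sum vanishes because $T_{n-1,n-1,i}=0$.

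The main obstacle is the reconciliation in the first paragraph: because the theorem is phrased over $\I_n(201)$ while $\tp$ and $\bottom$ arise from the $\I_n(210)$ decomposition, the proof must establish that $\Phi$ is \emph{statistic-preserving}, not merely cardinality-preserving, and this rests on the non-obvious fact that $\Phi$ permutes the non-maximum values within their original positions. Once the count is transported to $\I_n(210)$, the deletion argument and the ballot evaluation are routine; the only remaining care is the exact summation ranges, namely the strict inequality $\bottom<\tp$ forcing $j\ge b+1$ and the 210-avoidance forcing $e_n\ge\bottom(e')$ in the non-maximum case, both of which I have isolated above.
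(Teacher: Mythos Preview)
Your argument is correct and, for the recurrence and the initial conditions, is exactly the paper's: the paper also proves \eqref{Tnab:recurrence} by appending $b$ (your ``non-maximum'' case) or appending $a$ (your ``weak left-to-right maximum'' case) to an $e'\in\I_{n-1}(210)$, and it evaluates $T_{n,a,-1}$ via weakly increasing inversion sequences with $e_n=a$ viewed as Dyck paths.

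The difference is that you take seriously the mismatch between the theorem statement (over $\I_n(201)$) and the definition of $\tp,\bottom$ (on $\I_n(210)$) and spend your first paragraph showing that the bijection of Theorem~\ref{210eq201} preserves the weak-left-to-right-maximum positions and hence $\tp$ and $\bottom$. The paper's own proof simply ignores this discrepancy and argues directly in $\I_n(210)$; almost certainly the ``$201$'' in the statement is a typo for ``$210$'', since the very next sentence in the paper applies the result to both $|\I_n(210)|$ and $|\I_n(201)|$ via Theorem~\ref{210eq201}. Your extra paragraph is a genuine strengthening---it shows the bijection is statistic-preserving, not merely cardinality-preserving---but it is not needed for the theorem if one reads ``$201$'' as ``$210$''.
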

\begin{proof}
 $T_{n,a,-1}$  is the number of weakly increasing inversion sequences with $e_n=a$.  This is the number of Dyck paths whose last horizontal step is at height $a$ which is $\frac{n-a}{n}{n-1+a \choose a}$.
 For $b\geq 0$, an inversion sequence  $e$ of length $n$  with $\tp (e) = a$ and $ \bottom (e) = b$ can be obtained 
 by adding $b$ to an $e'$ of length $n-1$ with $\tp (e') = a$ and $\bottom (e') \leq b$; or by adding $a$ to an $e'$ of length $n-1$ with $b < \tp (e')  \leq a$ and $\bottom (e') = b$.
\end{proof}

From Theorems \ref{210eq201} and \ref{rec:210ab} we have
\begin{equation}
|\I_n(210)|\ =\ |\I_n(201)|\  = \ \sum_{a=0}^{n-1} \sum_{b=-1}^{a-1} T_{n,a,b} \ =\  \frac{1}{n+1}\binom{2n}{n}  +\sum_{a=0}^{n-1} \sum_{b=0}^{a-1} T_{n,a,b}.
\label{count210}
\end{equation}
The first 12 terms of the sequence  $|\I_n(210)|$ are
\[
1, 2, 6, 24, 118, 674, 4306, 29990, 223668, 1763468, 14558588, 124938648.
%,1108243002, 10115202962, 94652608690
\]
This sequence did not appear in the OEIS,  but has now been assigned A263777.

A different recurrence to compute $|\I_n(210)|=|\I_n(201|$ is derived in \cite{Mansour}.  It is more complicated than \eqref{Tnab:recurrence} due to the choice of statistics.  Nevertheless it is used to produce a generating function.
Can  \eqref{Tnab:recurrence}  be used to derive a simpler generating function?

\subsection{Inversion sequences avoiding {\bf 102}}
%A200753,  \#1694}

%Here is a simple pattern whose avoidance sequence we have not yet been able to determine.
Our calculations showed that the number of inversion sequences in $\I_n$ avoiding 102  is:
$$1,2,6,22,89,381,1694,7744, 36168, \ldots . $$
We checked for $n \leq 9$ that 
this matches the sequence A200753 in the OEIS \cite{Sloane},  whose generating function is given by
\begin{equation}
 A(x)=1+(x-x^2)(A(x))^3
 \label{102gf}
 \end{equation}
 but we were unable to prove this.

In \cite{Mansour}, Mansour and Shattuck used a delicate generating function argument to confirm that
the generating function $\sum_{n \geq 0} |\I_n(102)|x^n$ does satisfy \eqref{102gf} and from that they provide an explicit formula for $|I_n(102)|$.

Is there a direct combinatorial argument to show that the generating function for the $102$-avoiding inversion sequences satisfies \eqref{102gf}?

%but we don't yet know how to prove that they are the same.

\subsection{Avoiding {\bf 120}}
%Sequence \#2803}

Our calculations show that the number of inversion sequences avoiding the pattern $120$ is given by:
\[
1,2,6,23,103,515,2803,16334, 100700, \ldots ,
\]
but this sequence did not appear in the OEIS (it has now been assigned A263778) and we do not yet know how to count it.  This remains an open question.

\section{Avoiding patterns with repeated letters}

\subsection{Avoiding {\bf 000}:  the Euler  numbers and simsun permutations}
%A000111,   \# 1385:  1,2,5,16,61,272,1385,7936  one equivalence class}

The Euler up/down number $E_n$ is the number of permutations $\pi$ of $[n]$ such that $\pi_1<\pi_2 > \pi_3 < \pi_4> \cdots$.
This is a well-known interpretation of the coefficients in the Taylor series expansion of ${\rm tan}(x) + {\rm sec}(x)$:
\[
{\rm tan}(x) + {\rm sec}(x) = \sum_{n \geq 0} E_n \frac{x^n}{n!}.
\]

Several families are known to be in bijection with the up/down permutations, including 0-1-2-increasing trees \cite{KPP}.
These are $n$-vertex rooted unordered trees in which each vertex has at most two children. Each vertex has a distinct label from  the set $[n]$ and labels increase along any path from the root to a leaf.

\begin{figure}

\centering
\begin{tikzpicture}[every node/.style={circle,draw},level distance=1.2cm, sibling distance=2.2cm] 
\tikzstyle{level 3}=[level distance=1.2cm, sibling distance=1.2cm]
  \node {0} 
    child {node {1}
    	child {node {2}
		child {node {5}}
		child {node {7}}}
	child {node{6}
		child {node{9}}
		child[missing]{}}}
    child {node {3}
      child {node {4}
      	child {node {8}}
	child {node[inner sep=2pt] {10}} }}; 
\end{tikzpicture} \\

\caption{The tree corresponding to  $e=(0,1,0,3,2,1,2,4,6,4)$ under the bijection of Theorem \ref{thm:000}}
\label{SylvieTree}
\end{figure}
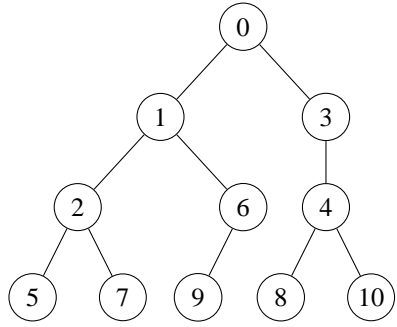

\begin{theorem}
$|\I_n(000)|= |E_{n+1}|$
\label{thm:000}
\end{theorem}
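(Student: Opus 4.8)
The plan is to establish a bijection between $\I_n(000)$ and the $0$-$1$-$2$-increasing trees on vertex set $\{0,1,\ldots,n\}$ (equivalently, $(n+1)$-vertex increasing trees with root labeled $0$ and every vertex having at most two children), which are known to be counted by $|E_{n+1}|$. First I would pin down the combinatorial characterization that makes $000$-avoidance tractable: an inversion sequence $e=(e_1,\ldots,e_n)$ avoids $000$ precisely when no value is repeated three or more times, i.e.\ every value $v$ occurs at most twice among $e_1,\ldots,e_n$. This is immediate from the definition of pattern containment, since a third occurrence of the same value would give indices $i_1<i_2<i_3$ whose reduction is $000$.

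The core of the argument is to define the map $e \mapsto T(e)$ recursively, reading the inversion sequence left to right and building the tree by inserting vertex $i$ at "step $i$" as a child of the vertex already carrying the appropriate label. Concretely, I expect the rule to be: process $i=1,2,\ldots,n$ in order; the value $e_i$ indexes which existing vertex becomes the parent of the new vertex $i$ — for instance, attach vertex $i$ as a child of the vertex that was the $e_i$-th vertex added so far, or of the vertex labeled with a suitably shifted value, reading off the example $e=(0,1,0,3,2,1,2,4,6,4)$ in Figure~\ref{SylvieTree} to calibrate the exact indexing convention. The key structural claim to verify is the equivalence of the two constraints: the inversion-sequence constraint $0\le e_i<i$ together with "no value repeated three times" should correspond exactly to "the chosen parent currently has fewer than two children" together with "labels increase from root to leaf." The first half of this (range $0\le e_i<i$ $\leftrightarrow$ parent label less than $i$, giving increasingness) is the standard correspondence between inversion sequences and increasing trees; the new ingredient is that forbidding a third copy of a value is exactly forbidding a third child of a vertex. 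I would prove this by showing that in the bijection, the number of occurrences of value $v$ in $e$ equals the number of children of the vertex to which value $v$ is assigned (or $1$ plus that number, depending on the convention), so "at most two occurrences" becomes "at most two children."

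The verification then splits into the usual two directions. For well-definedness and injectivity, I would argue by induction on $n$: deleting the last entry $e_n$ of a $000$-avoiding sequence yields a $000$-avoiding sequence in $\I_{n-1}$, and correspondingly removing the vertex $n$ (which is necessarily a leaf, since it was added last) from $T(e)$ yields a $0$-$1$-$2$-increasing tree on $\{0,\ldots,n-1\}$; one checks the two operations commute with the map. For surjectivity, given a $0$-$1$-$2$-increasing tree on $\{0,\ldots,n\}$, remove the largest-labeled vertex $n$ (a leaf), apply the inductive inverse to get $e' \in \I_{n-1}(000)$, and append the entry that records the parent of $n$; the range condition holds because $n$'s parent has a smaller label, and the $000$-avoidance holds because $n$'s parent had at most one child before $n$ was attached. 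The main obstacle I anticipate is getting the indexing convention for the parent map exactly right so that the equivalence "occurrences of a value $\leftrightarrow$ children of a vertex" is clean — there are a few natural variants (index parents by order of insertion, by label value, or by a running count), and only the correct one makes both the range bound and the double-occurrence bound fall out simultaneously; the worked example in Figure~\ref{SylvieTree} is the tool for fixing this, and once fixed, the inductive bookkeeping is routine.
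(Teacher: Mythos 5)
Your proposal is correct and is essentially the paper's own argument: the paper defines the bijection by letting $e_i$ be the (label of the) parent of vertex $i$ in a $0$-$1$-$2$-increasing tree on $\{0,1,\ldots,n\}$, so that the increasing-label condition gives $0\le e_i<i$ and the at-most-two-children condition gives at-most-two occurrences of each value, i.e.\ $000$-avoidance. The only difference is that you leave the parent-indexing convention open, whereas the paper uses the simplest one (parent $=$ label $e_i$ itself), which the example $e=(0,1,0,3,2,1,2,4,6,4)$ confirms.
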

\begin{proof}
Observe that $e \in \I_n$ avoids $000$ if and only if no entry occurs more than twice.
We consider 0-1-2-increasing trees $T$ with $n+1$ vertices labeled $0,1, \ldots, n$, which are counted by $E_{n+1}$.
It is easy to check that the mapping sending such a tree $T$ to the inversion sequence $e$, where $e_i$ is the parent of $i$ in $T,$ is a bijection between these trees and $\I_n(000)$.  
\end{proof}

For an example of this bijection, see Figure \ref{SylvieTree}.  The bijection of  Theorem \ref{thm:000} shows that the set of labels on the internal vertices of $T$ is exactly the set
 $\{e_1, \ldots, e_n\}$. 
 \begin{theorem}
 Let $E_{n,k}$ be the number of $000$-avoiding inversion sequences of length $n$ with $k$ distinct entries.
 Then $$E_{n,k}=(n-k+1)E_{n-1,k-1}+(2k-n+1)E_{n-1,k}$$ with initial conditions $E_{0,0}=1$ and $E_{n,k}=0$ for $k>n$ or $k < \lceil(n/2)\rceil$.
 \end{theorem}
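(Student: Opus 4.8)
The plan is to prove the recurrence by analyzing how a $000$-avoiding inversion sequence of length $n$ with $k$ distinct entries is built from one of length $n-1$. By Theorem~\ref{thm:000}, I would work directly with the bijection to $0$-$1$-$2$-increasing trees on vertex set $\{0,1,\ldots,n\}$, since the ``$k$ distinct entries'' statistic translates cleanly: as noted after Theorem~\ref{thm:000}, the set of distinct entries $\{e_1,\ldots,e_n\}$ is exactly the set of internal (non-leaf) vertices of the tree $T$. So $E_{n,k}$ counts $0$-$1$-$2$-increasing trees on $n+1$ vertices with exactly $k$ internal vertices, equivalently $n+1-k$ leaves.

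The main step is to pass from a tree $T'$ on $\{0,\ldots,n-1\}$ (with $j$ internal vertices) to a tree $T$ on $\{0,\ldots,n\}$ by attaching the new vertex $n$ as a leaf to some existing vertex $v$. Since $n$ is the largest label, the increasing-path condition is automatically satisfied, and $n$ is always a leaf of $T$. The constraint is that $v$ must have had fewer than two children in $T'$, i.e. $v$ has a ``free slot.'' Two cases arise: (i) $v$ was a leaf of $T'$, in which case $v$ becomes internal in $T$, so the number of internal vertices goes from $k-1$ to $k$; here $T'$ must have had $k-1$ internal vertices, and the number of choices for $v$ is the number of leaves of $T'$, which is $(n-1)+1-(k-1)=n-k+1$. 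This yields the term $(n-k+1)E_{n-1,k-1}$. (ii) $v$ was already internal in $T'$ with exactly one child, in which case the internal-vertex count is unchanged at $k$; so $T'$ had $k$ internal vertices, and I must count internal vertices of $T'$ with exactly one child. In a tree on $n$ vertices with $k$ internal vertices, the number of edges is $n-1$, so the total number of child-slots used is $n-1$; the $k$ internal vertices collectively have $n-1$ children, so if $d$ of them have one child and $k-d$ have two, then $d + 2(k-d) = n-1$, giving $d = 2k-n+1$. This yields the term $(2k-n+1)E_{n-1,k}$. Summing the two cases gives the recurrence.

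I would then verify the initial conditions: $E_{0,0}=1$ corresponds to the single tree consisting of just the root $0$ (no internal vertices, the root being a leaf); $E_{n,k}=0$ for $k>n$ is immediate since there cannot be more distinct entries than entries; and $E_{n,k}=0$ for $k<\lceil n/2\rceil$ follows because in a $000$-avoiding sequence each distinct value is used at most twice, so $2k \geq n$. One should also double-check that the coefficients $n-k+1$ and $2k-n+1$ are automatically nonnegative exactly on the support $\lceil n/2\rceil \le k \le n$, so no cancellation issues arise, and confirm consistency with Theorem~\ref{thm:000} via $\sum_k E_{n,k} = |E_{n+1}|$.

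The main obstacle, and the place to be careful, is the bookkeeping in case (ii): correctly counting the internal vertices of $T'$ with exactly one free child-slot, and making sure this count depends only on $n$ and $k$ (not on the finer shape of $T'$). The edge-counting argument above resolves this cleanly, but one must present it carefully, in particular handling the root: the root $0$ counts as internal precisely when it has at least one child, which is true for all trees with $n\ge 2$ vertices, and the formula $d=2k-n+1$ already accounts for it uniformly. An alternative, purely sequence-based proof is also possible — classify $e \in \I_n(000)$ by whether $e_n$ is a value appearing for the first time (contributing a new distinct entry, with $n-k+1$ available ``fresh'' slots in a suitable sense) or a value already appearing exactly once among $e_1,\ldots,e_{n-1}$ — but the tree formulation makes the slot-counting most transparent, so that is the route I would write up.
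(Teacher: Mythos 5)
Your argument is correct, and it is the paper's own decomposition transported through the bijection of Theorem~\ref{thm:000}: the paper classifies $e\in\I_n(000)$ directly by whether $e_n$ is a value not yet used (giving $n-k$ unused values plus the always-available $n-1$, hence $n-k+1$ choices from a sequence with $k-1$ distinct entries) or a value used exactly once (giving $k-(n-1-k)=2k-n+1$ choices from a sequence with $k$ distinct entries). That is precisely your ``alternative, purely sequence-based proof,'' and under the tree bijection your two cases (attach vertex $n$ to a leaf, or to an internal vertex with one free slot) match the paper's two cases verbatim, with the same coefficients. The one place where your write-up genuinely adds something is the justification of the coefficient $2k-n+1$: the paper gets it by subtracting the number of twice-used values, $n-1-k$, from $k$, whereas your edge-count $d+2(k-d)=n-1$ makes it transparent that the count depends only on $n$ and $k$ and not on the shape of the tree; both computations are of course the same arithmetic. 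The paper's version is shorter and avoids invoking the bijection at all, which keeps the theorem self-contained; your version makes the ``free slot'' bookkeeping more visual and ties the statistic directly to leaves versus internal vertices, which is consistent with the remark following Theorem~\ref{thm:000}. Your treatment of the initial conditions (each value used at most twice forces $2k\ge n$) also matches the paper's.
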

 \begin{proof}
 Elements of $\I_n(000)$ with $k$ distinct entries can be constructed either by appending an unused entry to end of an $e \in \I_{n-1}(000)$ with $k-1$ distinct entries
 or by appending a used (but unrepeated) entry to the end of an $e \in \I_{n-1}(000)$ with $k$ distinct entries.
 
 If $e \in \I_{n-1}(000)$ has $k-1$ distinct entries, then $n-k$ of the possible entries are unused and available for $e_n$.  Additionally, since $e_n=n-1$ is also possible, there are a total of $n-k+1$ choices.
 
 If $e \in \I_{n-1}(000)$ has $k$ distinct entries, then the other $n-1-k$ are repeats.  To append a used entry to $e$, while avoiding 000,  $e_n$ must be one of the $k-(n-1-k)$ used, but unrepeated, elements.  This gives a total of $2k-n+1$ choices.
 The recurrence follows.
 \end{proof}

 Another family counted by the Euler up-down numbers is the set $RS_n$ of {\em simsun permutations} \cite{sundaram}.
 A permutation is simsun if it has no double descents, even after removing $n, n-1, \ldots, k$ for any $k$.
 For example, 25637814 is not simsun: removing $8,7,6$ yields the permutation 25314, where $5<3<1$ is a double descent.  It is known (e.g. \cite{ChowShiu}) that if $rs_{n,k}$ is the number of simsun permutations of $[n]$ with $k$ descents then
 \begin{equation}
 rs_{n,k} = (k+1)rs_{n-1,k}+(n-2k+1)rs_{n-1,k-1}
 \label{simsundes}
 \end{equation}
 with initial conditions $rs_{0,0}=1$ and $rs_{n,k}=0$ for $k > \lfloor n/2 \rfloor$.

 We have the following relationship between simsun permutations and 000-avoiding inversion sequences.
 
 \begin{corollary}
 The number of $000$-avoiding inversion sequences in $\I_n$ with $n-k$ distinct entries is the
 number of simsun permutations of $[n]$ with $k$ descents.
 \label{000simsundes}
   \end{corollary}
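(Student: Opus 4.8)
The plan is to show that the two triangular arrays $E_{n,n-k}$ and $rs_{n,k}$ satisfy the same recurrence with the same initial conditions, after which the corollary is immediate. Concretely, I would define $E'_{n,k} := E_{n,n-k}$ (the number of $000$-avoiding inversion sequences of length $n$ with $n-k$ distinct entries) and verify that $E'_{n,k}$ satisfies \eqref{simsundes}, i.e.\ $E'_{n,k} = (k+1)E'_{n-1,k} + (n-2k+1)E'_{n-1,k-1}$, together with $E'_{0,0}=1$ and $E'_{n,k}=0$ for $k>\lfloor n/2\rfloor$.

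First I would translate the recurrence of the preceding theorem, $E_{n,j}=(n-j+1)E_{n-1,j-1}+(2j-n+1)E_{n-1,j}$, under the substitution $j = n-k$. The first term becomes $(n-(n-k)+1)E_{n-1,(n-k)-1} = (k+1)E_{n-1,n-1-k} = (k+1)E'_{n-1,k}$, using that $(n-k)-1 = (n-1)-k$. The second term becomes $(2(n-k)-n+1)E_{n-1,n-k} = (n-2k+1)E_{n-1,(n-1)-(k-1)} = (n-2k+1)E'_{n-1,k-1}$, using $n-k = (n-1)-(k-1)$. This is exactly \eqref{simsundes}. Next I would check the boundary data: $E'_{0,0}=E_{0,0}=1$; and the condition $E_{n,j}=0$ for $j<\lceil n/2\rceil$ translates, under $j=n-k$, into $E'_{n,k}=0$ whenever $n-k<\lceil n/2\rceil$, i.e.\ $k > n-\lceil n/2\rceil = \lfloor n/2\rfloor$, matching the simsun boundary condition. (The condition $E_{n,j}=0$ for $j>n$ becomes $E'_{n,k}=0$ for $k<0$, which is automatic.) Since the two arrays obey the same recurrence and agree on the boundary, induction on $n$ gives $E'_{n,k}=rs_{n,k}$ for all $n,k$, which is precisely the statement of the corollary.

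There is no real obstacle here: the only thing to be careful about is bookkeeping with the index shifts and making sure the ranges of validity of the two boundary conditions line up. In particular one should note that the recurrence \eqref{Tnab:recurrence}-style reindexing is purely mechanical once the substitution $j \mapsto n-k$ is fixed, and that both recurrences express the $(n,k)$ entry in terms of entries at level $n-1$, so a single induction on $n$ suffices. An alternative, more bijective approach would be to compose the tree bijection of Theorem~\ref{thm:000} (which sends $e\in\I_n(000)$ to a $0$-$1$-$2$-increasing tree on $\{0,1,\dots,n\}$, with the distinct-entry count of $e$ equal to the number of internal vertices) with a known bijection between $0$-$1$-$2$-increasing trees and simsun permutations that carries ``number of leaves'' to ``number of descents''; since a tree on $n+1$ vertices with $n-k$ internal vertices has $k+1$ leaves, this would again give the count $k$ for descents. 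I would present the recurrence argument as the proof, as it is shortest and self-contained given the results already established.
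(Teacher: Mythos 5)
Your proposal is correct and follows essentially the same route as the paper: both define $F_{n,k}=E_{n,n-k}$, substitute into the recurrence of the preceding theorem to recover \eqref{simsundes}, and match the initial conditions $F_{0,0}=1$ and $F_{n,k}=0$ for $k>\lfloor n/2\rfloor$. Your index bookkeeping and boundary translation are accurate, so nothing further is needed.
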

 \begin{proof}
 The number of $e \in \I_n(000)$ with $n-k$ distinct entries is obtained by
 replacing $k$ by $n-k$ in the recurrence of the previous theorem.  Let $F_{n,k}=E_{n,n-k}$ be the number of $000$-avoiding inversion sequences in $\I_n$ with $n-k$ distinct entries.  This gives:  $$F_{n,k}=(n-(n-k)+1)F_{n-1,k}+(2(n-k)-n+1)F_{n-1,k-1}=(k+1)F_{n-1,k}+(n-2k+1)F_{n-1,k-1}$$ with initial conditions $F_{0,0}=1$ and $F_{n,k}=0$ for $k >\lfloor(n/2)\rfloor$, the same recurrence satisfied by $rs_{n,k}$.
 \end{proof}

It would be interesting to  have a natural bijection for Corollary \ref{000simsundes}.

The {\em Entringer numbers}, $d_{n,k}$, count the number of  down/up permutations of $[n+1]$ with first entry equal to $k+1$.
(These are A008281 in the OEIS \cite{Sloane}.)
They satisfy the recurrence $$d_{n,k}=d_{n,k-1}+d_{n-1,n-k}.$$
Our calculations suggest the following.

\begin{conjecture}
For $n \geq 1$ and $0 \leq k \leq n-1$, $d_{n,k}$  is the number of  $e \in \I_n(000)$ with $e_n=k-1$.
\end{conjecture}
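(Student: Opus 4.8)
The plan is to prove that $b_{n,k}:=|\{e\in\I_n(000):e_n=k-1\}|$ satisfies the same recurrence and boundary values as $d_{n,k}$, so that $b_{n,k}=d_{n,k}$ follows by induction on $n$. The boundary cases are immediate: $b_{n,0}=0=d_{n,0}$ (no inversion sequence has last entry $-1$, and a down/up permutation cannot begin with $1$), and $b_{1,1}=1=d_{1,1}$. So it suffices to show that $b$ obeys the Entringer recurrence $d_{n,k}=d_{n,k-1}+d_{n-1,n-k}$; summing it (and using $b_{n,0}=0$) shows this is equivalent to the summed form $b_{n,k}=\sum_{j=n-k}^{n-1}b_{n-1,j}$, which is the identity I would actually establish, the induction then closing since $d$ also satisfies this summed form. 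Recall (see the proof of Theorem~\ref{thm:000}) that $e\in\I_n$ avoids $000$ precisely when no value occurs more than twice. For $k\ge1$, deleting the last entry therefore sets up a bijection between $\{e\in\I_n(000):e_n=k-1\}$ and $\{e'\in\I_{n-1}(000): \text{the value }k-1\text{ occurs at most once in }e'\}$, so the left-hand side of the summed recurrence counts that set; and since $b_{n-1,j}$ counts the $e'\in\I_{n-1}(000)$ with $e'_{n-1}=j-1$, the right-hand side counts $\{e'\in\I_{n-1}(000):e'_{n-1}\ge n-k-1\}$. Thus everything comes down to a \emph{key lemma}: for $m\ge1$ and $0\le w\le m-1$, the inversion sequences in $\I_m(000)$ in which the value $w$ occurs at most once are equinumerous with those whose last entry is at least $m-1-w$ (apply this with $m=n-1$ and $w=k-1$).

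I would try to prove the key lemma by a bijection, attacked either by induction on $m$ using the ``append a last letter'' decomposition of $\I_m(000)$, or via the $0$-$1$-$2$ increasing tree encoding of Theorem~\ref{thm:000}, under which the first family becomes the trees on $\{0,1,\dots,m\}$ in which vertex $w$ has at most one child and the second becomes the trees in which the leaf $m$ has parent at least $m-1-w$, so that one seeks a promotion-type operation on these trees. Two extremes already indicate the mechanism. For $w=m-1$ both families equal all of $\I_m(000)$. For $w=0$, the first family is $\{e\in\I_m(000):e_i\ge1\text{ for }i\ge2\}$, and the explicit rule $e\mapsto(e_2-1,e_3-1,\dots,e_m-1,\,m-1)$ — subtract $1$ throughout the tail and append the new maximum $m-1$ — carries it bijectively onto $\{e\in\I_m(000):e_m=m-1\}$, both sides being counted by $|\I_{m-1}(000)|$. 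The general bijection should interpolate between these, converting the freedom gained by $w$ not being a doubled value into a correspondingly large last entry.

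The step I expect to be the main obstacle is making this bijection work uniformly in $w$. The constraints are not nested as $w$ ranges (the value $w$ being doubled and the value $w+1$ being doubled are compatible), so there is no telescoping in $w$; and no single statistic-transporting involution on $\I_m(000)$ can do the job at once, because the set of doubled values of a $000$-avoider need not be an initial segment of $\{0,1,2,\dots\}$ — e.g.\ $(0,1,2,1)$ has only the value $1$ doubled. A working recursive construction must keep track of the position(s) of the value $w$ together with the letter being appended, and resolve the interaction between them. An appealing alternative, which would prove and simultaneously refine the statement, is to find a direct bijection from $\I_n(000)$ to the down/up permutations of $[n+1]$ sending $e_n+1$ to the first letter — the analogue of the classical ``delete the first entry and take complements'' proof of the Entringer recurrence for alternating permutations; this is probably the cleanest route, and would also be worth pursuing for its own sake.
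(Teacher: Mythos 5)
First, be aware that the paper itself does not prove this statement: it appears only as a conjecture supported by computation, so there is no proof of record to measure your argument against. That said, your reduction is correct and is genuine progress. The boundary values match ($b_{n,0}=0=d_{n,0}$ and $b_{1,1}=1=d_{1,1}$); deleting the last entry does biject $\{e\in\I_n(000):e_n=k-1\}$ with the set of $e'\in\I_{n-1}(000)$ in which the value $k-1$ occurs at most once; and the summed Entringer recurrence $d_{n,k}=\sum_{j=n-k}^{n-1}d_{n-1,j}$ is the right target. So the conjecture is indeed equivalent to your key lemma, which checks out on small cases and at the two extremes $w=0$ and $w=m-1$, where your explicit maps work. (One small point of bookkeeping: the stated range $0\le k\le n-1$ omits $k=n$, i.e.\ $e_n=n-1$, so you should say explicitly how the top value of $k$ is handled when matching totals, since $\sum_k b_{n,k}$ must equal $|\I_n(000)|=E_{n+1}=\sum_k d_{n,k}$.)

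However, the key lemma is exactly where the content of the conjecture lives, and you do not prove it: you candidly describe it as ``the main obstacle,'' explain why the naive approaches fail (the constraints are not nested in $w$, the doubled values need not form an initial segment), and sketch two possible directions -- a recursive construction tracking the positions of $w$, or a direct bijection to down/up permutations of $[n+1]$ carrying $e_n+1$ to the first letter -- without carrying either out. So the proposal is a correct and useful reformulation plus a research plan, not a proof. To close the gap you would need to actually construct the bijection (or a sign-reversing involution, or a second recurrence satisfied by both sides) between the $000$-avoiders of length $m$ in which the value $w$ is not doubled and those with last entry at least $m-1-w$; until that is done, the statement remains exactly as open as the paper leaves it.
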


\subsection{Avoiding {\bf 001}:  $2^{n-1}$ and $\Sn_n(132,231)$}
%A000079,  better: A011782,  \#64, 31 patterns fall into 4 different avoidance sets.}

In this section, we show that the 001-avoiding inversion sequences are counted by powers of 2.  This indicates a natural connection between $\I_n(001)$ and permutations of length $n$ avoiding certain patterns of length~3: the permutations in $\Sn_n$ avoiding both $213$ and $312$ are counted by $2^{n-1}$, as are nine other pairs of permutations of $123$. This was shown by Simion and Schmidt in \cite{SimionSchmidt}.
Rotem in \cite{Rotem} showed the $(231,312)$ case.

\begin{theorem}[Simion-Schmidt]
$|\Sn_n(\alpha, \beta)|=2^{n-1}$
for any of the following pairs  $(\alpha, \beta)$ of patterns:
\[
(123,132),
(123,213),
(132,213),(132,231),(132,312),
\]
\[
(213,231),
(213,312),(231,312),
(231,321),(312,321).
\]
\label{SimionSchmidt}
\end{theorem}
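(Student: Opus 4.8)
The plan is to reduce the ten pairs to three via symmetry and then give a short structural argument for each representative. The reverse, complement and inverse operations on $\Sn_n$ are bijections that carry $\Sn_n(\alpha,\beta)$ onto $\Sn_n(\alpha',\beta')$, where $\alpha',\beta'$ are the images of $\alpha,\beta$ under the same operation, so $|\Sn_n(\alpha,\beta)|$ depends only on the orbit of the unordered pair $\{\alpha,\beta\}$ under the group these generate. A finite check shows the ten pairs in the statement form exactly three orbits, with representatives $(123,132)$, $(132,213)$ and $(132,231)$; it therefore suffices to prove $|\Sn_n(\alpha,\beta)|=2^{n-1}$ for these.

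\emph{The pair $(132,231)$.} I would argue that if $\pi\in\Sn_n(132,231)$ and $n$ occupies a position $k$ with $1<k<n$, then for any $i<k<j$ the triple $\pi_i\,\pi_k\,\pi_j$ is an occurrence of $132$ when $\pi_i<\pi_j$ and of $231$ when $\pi_i>\pi_j$ — impossible — so $n$ must sit at position $1$ or at position $n$. Conversely, deleting $n$ from an end of such a $\pi$ leaves a $\{132,231\}$-avoider of length $n-1$, and reinserting $n$ at either end never creates a forbidden occurrence (a global maximum at the end of a word cannot play the role "$3$" of $132$ or $231$, since that symbol has an entry to its right, nor any smaller role). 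This gives $|\Sn_n(132,231)|=2\,|\Sn_{n-1}(132,231)|$ for $n\ge2$, hence $2^{n-1}$.

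\emph{The pairs $(123,132)$ and $(132,213)$.} For $\pi\in\Sn_n(123,132)$ with $\pi_k=n$, avoiding $123$ forces $\pi_1>\cdots>\pi_{k-1}$ (an ascent $\pi_i<\pi_j$ with $i<j<k$ yields the $123$ pattern $\pi_i\pi_j\pi_k$), and avoiding $132$ forces every entry after position $k$ to be smaller than every entry before it ($\pi_i<\pi_j$ with $i<k<j$ yields the $132$ pattern $\pi_i\pi_k\pi_j$). Hence the prefix before $n$ is the decreasing run $n-1,n-2,\dots,n-k+1$ and the suffix after $n$ is an arbitrary element of $\Sn_{n-k}(123,132)$ on the letters $1,\dots,n-k$; conversely any such assembly avoids both patterns. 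Summing over $k$ and using $|\Sn_0|=1$ gives $|\Sn_n(123,132)|=\sum_{k=1}^n|\Sn_{n-k}(123,132)|$, which telescopes to $2\,|\Sn_{n-1}(123,132)|$ for $n\ge2$ — equivalently, these permutations biject with compositions of $n$ — so the count is $2^{n-1}$. The pair $(132,213)$ is handled identically, with the decreasing prefix replaced by an increasing one.

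\emph{Expected main obstacle.} There is no deep difficulty; the two points needing care are the bookkeeping in the symmetry reduction (confirming the orbit structure is exactly as claimed) and, in the structural steps, checking that splitting off a monotone block or reinserting the maximum neither creates nor destroys a forbidden occurrence. Both are routine once the characterizations above are written out.
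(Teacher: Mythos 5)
Your argument is correct, but note that the paper does not prove this statement at all: it is quoted as a known result of Simion and Schmidt, with only a citation to their 1985 paper. Your proof — reducing the ten pairs to the three orbit representatives $(123,132)$, $(132,213)$, $(132,231)$ under reverse/complement/inverse (the orbit sizes $4+2+4=10$ check out) and then analyzing the position of $n$ in each case — is sound and is essentially the original Simion--Schmidt argument. The only cosmetic quibble is in the $(132,231)$ case: your parenthetical justifies safety of inserting $n$ only ``at the end of a word,'' whereas you also insert it at the front; the symmetric observation (the maximal symbol sits in the middle position of both $132$ and $231$, so it can never participate in an occurrence when placed at either extreme) covers both insertions and should be stated that way.
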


\begin{observation} For $n \geq 1$, $\I_n(001)$ is the set of $e \in \I_n$  
 satisfying, for some $t \in [n]$,
\begin{equation}
e_1 < e_2 < \cdots < e_t \geq e_{t+1} \geq e_{t+2} \geq \cdots \geq e_n.
\label{equation:64A}
\end{equation}
\label{observation:64A}
\end{observation}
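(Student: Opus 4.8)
The plan is to give a direct combinatorial characterization of $001$-avoidance by unwinding the definition of pattern containment. Recall that $e$ contains $001$ if there exist positions $i<j<k$ whose reduced triple equals $001$; by the reduction rule, the two smallest entries of the triple are replaced by $0$ and the largest by $1$, so the reduced word is $001$ exactly when $e_i = e_j < e_k$. Hence $e$ \emph{avoids} $001$ if and only if there are no indices $i<j<k$ with $e_i = e_j < e_k$. The claim is that this condition is equivalent to the ``unimodal with strict left part and weak right part'' shape in \eqref{equation:64A}.

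First I would prove the easy direction: any $e$ of the form \eqref{equation:64A} avoids $001$. If $e_i = e_j$ with $i<j$, then since $e_1 < e_2 < \cdots < e_t$ is strictly increasing, at most one of $i,j$ lies in $\{1,\ldots,t\}$; thus $j \geq t+1$, and so $j$ lies in the weakly decreasing tail $e_{t+1} \geq e_{t+2} \geq \cdots \geq e_n$. Then for every $k > j$ we have $e_k \leq e_j$, so there is no $k>j$ with $e_k > e_j = e_i$. Hence no $001$ pattern can be completed, and $e$ avoids $001$.

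For the converse, suppose $e \in \I_n(001)$. Let $t$ be the largest index such that $e_1 < e_2 < \cdots < e_t$ (so $t \geq 1$ since $e_1 = 0$ trivially starts an increasing run, and $t = n$ handles the strictly increasing case). By maximality of $t$, if $t < n$ then $e_{t+1} \leq e_t$, giving $e_t \geq e_{t+1}$. It remains to show $e_{t+1} \geq e_{t+2} \geq \cdots \geq e_n$. Suppose not: then there is some $k \geq t+2$ with $e_{k-1} < e_k$. I claim this forces a $001$ pattern. The key point, which is the main obstacle to make airtight, is to produce an index $i < k-1$ with $e_i = e_{k-1}$; together with $j = k-1 < k$ and $e_{k-1} < e_k$ this is exactly a $001$ occurrence. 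To find such $i$: the position $k-1 \geq t+1$ is not part of the initial strictly increasing run, and one argues — using that $e_1<\cdots<e_t$ covers all values $0,1,\ldots$ up to $e_t$ in the natural range and that $e_{k-1} \le e_t$ since $k-1 > t$ would otherwise extend the run — that the value $e_{k-1}$ has already appeared among $e_1,\ldots,e_{k-2}$. Concretely, since $e_t \geq e_{t+1} \geq \cdots$ cannot be assumed yet, I would instead take $i$ to be the \emph{last} index before $k-1$ with $e_i \ge e_{k-1}$; minimality/maximality bookkeeping shows $e_i = e_{k-1}$ is forced (if $e_i > e_{k-1}$ one gets a different forbidden configuration, but here only $001$ is forbidden, so care is needed — the clean route is: among $e_1,\dots,e_{k-1}$ pick the largest index $i$ with $e_i = \max(e_1,\dots,e_{k-1})$; if that maximum exceeds $e_{k-1}$, then $e_{k-1}$ together with $e_k$ and the fact that values must repeat in $[0,k-1]$ still yields an equal earlier copy of $e_{k-1}$). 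This repetition-forcing step is where the argument needs to be executed carefully, but once an $i<k-1$ with $e_i = e_{k-1} < e_k$ is exhibited, the contradiction with $e \in \I_n(001)$ is immediate, completing the proof that $e$ has the shape \eqref{equation:64A}.
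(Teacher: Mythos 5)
The paper states this as an Observation and supplies no proof, so there is nothing to compare against directly; judged on its own, your attempt has the right skeleton (unwind the reduction to ``no $i<j<k$ with $e_i=e_j<e_k$'', then prove both directions), and the forward direction is correct and complete. The converse, however, has a genuine gap exactly where you flag it. Your justification ``$e_{k-1}\le e_t$ since $k-1>t$ would otherwise extend the run'' only works for $k-1=t+1$; maximality of $t$ is a statement about consecutive positions and says nothing about $e_{k-1}$ when $k-1>t+1$. Both of your fallback patches also fail: taking the last $i<k-1$ with $e_i\ge e_{k-1}$ need not give $e_i=e_{k-1}$ (and, as you note, a strict inequality there is not a forbidden configuration); and the claim that ``values must repeat in $[0,k-1]$'' is false in general, since $e_1,\dots,e_{k-1}$ are $k-1$ values in $\{0,\dots,k-2\}$ and can all be distinct (e.g.\ $e_i=i-1$). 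Even when some value does repeat, nothing forces the repeated value to be $e_{k-1}$.

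The fix is small but essential: take $k$ to be the \emph{smallest} index with $k\ge t+2$ and $e_{k-1}<e_k$. Minimality gives $e_{t+1}\ge e_{t+2}\ge\cdots\ge e_{k-1}$, and maximality of $t$ gives $e_{t+1}\le e_t$. Combined with the fact (which you correctly identified) that the strict initial run forces $e_i=i-1$ for $i\le t$, so that the run covers every value in $\{0,\dots,t-1\}$, you get $e_{k-1}\le t-1$ and hence $e_{k-1}=e_{i}$ for $i=e_{k-1}+1\le t<k-1$. Then $(i,k-1,k)$ is an occurrence of $001$, the desired contradiction. Equivalently, one can run a forward induction showing $e_{t+1}\ge e_{t+2}\ge\cdots$ one step at a time, using at each step that the current tail entry has an earlier equal copy. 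With either formulation the converse closes; as written, your argument does not.
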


\begin{theorem}
For $n \geq 1$, $|\I_n(001)|=2^{n-1}$.
\label{theorem:64A}
\end{theorem}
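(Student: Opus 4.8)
The plan is to upgrade Observation~\ref{observation:64A} into an explicit bijective decomposition of $\I_n(001)$ and then finish with a one-line binomial sum. Concretely, I would show that $\I_n(001)$ is in bijection with the disjoint union, over $t \in [n]$, of the sets of weakly decreasing sequences of length $n-t$ with entries in $\{0,1,\ldots,t-1\}$.

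The first step is to pin down the increasing prefix. Given $e \in \I_n(001)$, Observation~\ref{observation:64A} supplies some $t$ with $e_1 < \cdots < e_t \ge e_{t+1} \ge \cdots \ge e_n$; since $e_t \ge e_{t+1}$ forbids the strict increase from continuing, this $t$ is forced to be the length of the longest strictly increasing prefix of $e$, hence is determined by $e$. Now $0 \le e_1 < 1$ forces $e_1 = 0$, and then $e_i > e_{i-1}$ together with $e_i < i$ forces $e_i = i-1$ inductively, so the prefix is exactly $(0,1,2,\ldots,t-1)$. Thus $e$ is completely determined by the pair $\big(t,(e_{t+1},\ldots,e_n)\big)$, where $(e_{t+1},\ldots,e_n)$ is weakly decreasing with every entry at most $e_t = t-1$. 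Conversely, any pair $\big(t,(f_1,\ldots,f_{n-t})\big)$ with $t-1 \ge f_1 \ge \cdots \ge f_{n-t} \ge 0$ yields a valid element $(0,1,\ldots,t-1,f_1,\ldots,f_{n-t})$ of $\I_n$: the inversion-sequence bound on the tail is automatic since $f_j \le t-1 < t+j-1$ for $j \ge 1$, the shape condition of Observation~\ref{observation:64A} holds, and $t$ is recovered as the longest strictly increasing prefix because $f_1 \le t-1 = e_t$ prevents it from extending. So the correspondence is a genuine bijection.

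The count is then immediate: the number of weakly decreasing sequences of length $n-t$ over the $t$-element set $\{0,\ldots,t-1\}$ is the multiset coefficient $\binom{(n-t)+(t-1)}{n-t} = \binom{n-1}{n-t}$, whence
$$|\I_n(001)| \;=\; \sum_{t=1}^{n}\binom{n-1}{n-t} \;=\; \sum_{j=0}^{n-1}\binom{n-1}{j} \;=\; 2^{n-1}.$$

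I do not expect a genuine obstacle; the only point needing care is the well-definedness of the decomposition — namely that the $t$ in Observation~\ref{observation:64A} is forced to be the longest-strictly-increasing-prefix length, and that appending a weakly decreasing tail bounded by $t-1$ never lengthens that prefix. (Alternatively one could prove the recurrence $a_n = 2a_{n-1}$ directly, or encode the pair $\big(t,(f_1,\ldots,f_{n-t})\big)$ as the boundary path of the partition $(f_1,\ldots,f_{n-t})$ inside the $(n-t)\times(t-1)$ rectangle to get an explicit bijection $\I_n(001) \to \{0,1\}^{n-1}$; but the binomial sum above is the shortest route.)
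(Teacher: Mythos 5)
Your proposal is correct and is essentially the paper's own first proof: identify the forced prefix $(0,1,\ldots,t-1)$, view the tail as a partition in an $(n-t)\times(t-1)$ box, and sum $\binom{n-1}{t-1}$ over $t$. The only difference is that you spell out the uniqueness of $t$ (as the longest strictly increasing prefix), a point the paper leaves implicit when it sums over $t$.
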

\begin{proof}  We give two proofs based on Observation \ref{observation:64A}.

{\em First proof.}  In an inversion sequence  $e$ satisfying \eqref{equation:64A} for some $t$, it must be the case that $e_i$ is maximal (i.e. $e_i=i-1$) whenever $1 \leq i \leq t$.  It follows that the rest of the sequence $p=(e_{t+1}, \ldots, e_n)$ can be viewed as a partition that fits in an $n-t$ by $t-1$ box, of which there are $\binom{(n-t)+(t-1)}{t-1} = \binom{n-1}{t-1}$.  Summing over $t$ from $1$ to $n$ gives the result.

{\em Second proof.}  
Recall from Section 1 the bijection  $\Theta(\pi):  \Sn_n \rightarrow \I_n$ for  $\pi=\pi_1 \ldots \pi_n \in \Sn_n$  defined by $\Theta(\pi)=(e_1,e_2, \ldots, e_n)$, where $e_i = |\{j \ | \ j < i \ {\rm and} \ e_j > e_i \}|$.  Note that $e \in \I_n$ satisfies \eqref{equation:64A}
if and only if
 $\pi = \Theta^{-1}(e)$ satisfies
\[
\pi_1 > \pi_2 > \cdots > \pi_t < \pi_{t+1}  < \pi_{t+2} < \cdots < \pi_n.
\]
Such permutations are the ones that avoid both $132$ and $231$, so the result follows by Theorem \ref{SimionSchmidt}.
\end{proof}

\subsection{Avoiding {\bf 011}: the Bell numbers}
%A000110,  \#877: 1,2,5,15,52,203,877,4140  A,B,C,D}

In this section we show that the $011$-avoiding inversion sequences are counted by the Bell numbers.

The {\em Bell number} $B_n$ is the number of ways to partition an $n$-element set into nonempty subsets called {\em blocks}.
The numbers $S_{n,k}$ of such  set partitions into $k$ blocks are known as the {\em Stirling numbers of the second kind} and they satisfy the recurrence $S_{n,k}=kS_{n-1,k}+S_{n-1,k-1}$ with initial conditions $S_{n,1}=S_{n,n}=1$.

\begin{observation}
The $011$-avoiding inversion sequences are those in which the positive entries are distinct.
\label{observation:877A}
\end{observation}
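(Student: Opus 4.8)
The statement to prove is Observation~\ref{observation:877A}: an inversion sequence avoids $011$ if and only if its positive entries are distinct.

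The plan is to argue by contrapositive in both directions, directly unwinding the definition of pattern containment. First I would recall what a $011$ pattern means: $e$ contains $011$ if there are indices $i_1 < i_2 < i_3$ such that the reduction of $e_{i_1}e_{i_2}e_{i_3}$ equals $011$; since the reduction of a length-three word is $011$ exactly when the first entry is strictly smallest and the other two are equal, this says $e_{i_1} < e_{i_2} = e_{i_3}$.

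For the ``only if'' direction (avoiding $011$ $\Rightarrow$ positive entries distinct), suppose some positive value $v > 0$ appears twice, say $e_j = e_k = v$ with $j < k$. Since $v \geq 1$ and $0 \leq e_1 < 1$ forces $e_1 = 0 < v$, the position $1$ together with $j$ and $k$ gives indices $1 < j < k$ with $e_1 < e_j = e_k$, hence the reduction of $e_1 e_j e_k$ is $011$, so $e$ contains $011$. (Note we must have $j \geq 2$ since $e_1 = 0 \neq v$, so the three indices are genuinely distinct.) For the ``if'' direction, suppose $e$ contains $011$ via $i_1 < i_2 < i_3$; then $e_{i_2} = e_{i_3}$ and $e_{i_1} < e_{i_2}$, so $e_{i_2} \geq e_{i_1} + 1 \geq 1$, meaning $e_{i_2}$ is a positive value occurring twice. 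Thus the positive entries are not distinct. Taking contrapositives of both implications yields the equivalence.

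There is no real obstacle here; the only thing to be careful about is the edge case in the ``only if'' direction where one might worry that a repeated positive value could be ``adjacent'' to position $1$ — but since $e_1$ is forced to be $0$, it can always serve as the small element of the pattern, and the indices automatically come out distinct. So the main (minor) point to get right is just the verification that position $1$ supplies the required strictly-smaller first entry whenever a positive value repeats.
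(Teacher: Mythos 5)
Your proof is correct and is exactly the routine verification the paper leaves implicit (the paper states this as an unproved Observation). The two directions you give — a repeated positive value together with the forced $e_1=0$ yields a $011$ occurrence, and any $011$ occurrence forces a repeated positive value — are the natural unwinding of the definitions, with the $e_1=0$ point being the only detail worth noting.
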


\begin{theorem}
The number of $011$-avoiding inversion sequences in $\I_n$ with $k$ zeros is $S_{n,k}$.
\label{thm:011}
\end{theorem}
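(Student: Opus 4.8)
The plan is to show that the number $c_{n,k}$ of $011$-avoiding inversion sequences in $\I_n$ with exactly $k$ zeros satisfies the Stirling recurrence $c_{n,k}=k\,c_{n-1,k}+c_{n-1,k-1}$, with the same boundary conditions $c_{n,1}=c_{n,n}=1$; the result then follows by induction on $n$. The characterization from Observation \ref{observation:877A} — that an inversion sequence avoids $011$ precisely when its positive entries are pairwise distinct — is what makes the bookkeeping tractable, so the whole argument is a careful count of how a $011$-avoiding sequence of length $n$ can be obtained from one of length $n-1$ by appending a last entry $e_n$.

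First I would set up the recursion. Given $e=(e_1,\dots,e_{n-1})\in\I_{n-1}(011)$ with $k$ zeros, I want to count the admissible values of $e_n\in\{0,1,\dots,n-1\}$ that keep the sequence $011$-avoiding. Appending $e_n=0$ always works and keeps the zero-count; this is the source of nothing new structurally but I will need it when I reorganize the count by target zero-count. Appending a positive value $e_n>0$ is admissible iff $e_n$ does not already appear as a positive entry of $e$; since $e$ has $k$ zeros and $n-1$ entries total, it has $n-1-k$ positive entries, all distinct, hence the number of \emph{forbidden} positive values among $\{1,\dots,n-1\}$ is exactly $n-1-k$, leaving $(n-1)-(n-1-k)=k$ admissible positive choices for $e_n$. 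So from each length-$(n-1)$ sequence with $k$ zeros we produce exactly one length-$n$ sequence with $k+1$ zeros (appending $0$) and exactly $k$ length-$n$ sequences with $k$ zeros (appending an allowed positive value). Reading this off by the zero-count of the resulting length-$n$ sequence gives
\[
c_{n,k} = k\, c_{n-1,k} + c_{n-1,k-1},
\]
which is the Stirling recurrence.

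It remains to check the base/boundary cases: $c_{n,n}=1$ since the only inversion sequence of length $n$ with $n$ zeros is the all-zero sequence, which avoids everything; and $c_{n,1}=1$ since a $011$-avoiding sequence with a single zero must have that zero in position $1$ (as $e_1=0$ always) and then $e_2,\dots,e_n$ positive and distinct with $0\le e_i<i$, forcing $e_i=i-1$ for all $i\ge 2$ — the maximal sequence — which is the unique such sequence. Together with the recurrence, induction on $n$ gives $c_{n,k}=S_{n,k}$.

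I do not expect a genuine obstacle here; the only point requiring care is the clean separation of the count of extensions into the two cases and confirming that the "$n-1-k$ distinct positive entries" tally is exactly right (in particular that a positive value not currently present may always be safely appended — true because a $011$ pattern requires the repeated value to be positive, and the other two entries $0,e_j$ with $0<e_j=e_n$ reduce to $011$ only if such an earlier repeated positive pair exists, which distinctness of $e_n$ from prior positive entries prevents). One might alternatively phrase the bijection directly with set partitions: the positions of zeros, together with which prior positive value each non-maximal positive entry "reuses," encode the block structure — but the recurrence argument is the cleanest route and is what I would write up.
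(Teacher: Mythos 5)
Your proof is correct and follows essentially the same route as the paper's: both establish the Stirling recurrence $c_{n,k}=k\,c_{n-1,k}+c_{n-1,k-1}$ by appending a last entry, using Observation \ref{observation:877A} to count exactly $k$ admissible positive values for $e_n$ (the paper counts these as the $k-1$ unused elements of $[n-2]$ plus the new value $n-1$, which is the same tally), and both check the boundary cases $c_{n,1}=c_{n,n}=1$. No gaps.
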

\begin{proof}
An inversion sequence $e \in \I_n(011)$ with $k$ zeros can arise from an inversion sequence in $\I_{n-1}(011)$ in one of two ways.
If $e_n=0$ then $(e_1, \ldots, e_{n-1}) \in \I_{n-1}(011)$ has $k-1$ zeros.
Otherwise, $(e_1, \ldots, e_{n-1}) \in \I_{n-1}$ has $k$ zeros and, by Observation \ref{observation:877A},  it has $n-1-k$ distinct positive entries (out of the possible $n-2$ positive elements of  $[n-2]$.)  This means that any of the remaining  $k-1$ positive elements can be assigned to $e_n$, as well as the new possibility $n-1$, for a total of $k$.  
Since the only $e \in \I_n(011)$ with one zero is $(0,1,2, \ldots, n)$ and 
the only $e \in \I_n(011)$ with $n$ zeros is $(0,0, \ldots, 0)$,
the recurrence of the Stirling numbers is satisfied with the same initial conditions.
\end{proof}

A {\em restricted growth function} is a finite integer sequence $v=(v_1, \ldots, v_n)$ with $v_1=1$ and for $1<i \leq n$,
$v_i \leq 1+ {\rm max}\{v_1, \ldots, v_{i-1}\}$.  Let $G_n$ be the set of restricted growth functions of length $n$.
Elements of $G_n$ encode partitions of $[n]$: given a partition $\Pi$ of $[n]$, order the blocks of $\Pi$ as
$B_1, \ldots, B_k$ so that min$(B_i)<$ min$(B_{i+1})$ 
for $1 \leq i < k$.  Then $v \in G_n$ corresponds to the set partition $\Pi$ 
where $i$ is in block $B_b$ of $\Pi$ if and only if $v_i=b$.  The number of distinct entries of $v$ is the number of blocks of $\Pi$.

For example if $v=(1,2,3,1,3,2,4,5,6,3,4,2)$ then \[\Pi = (\{1,4\},\ \{2,6,12\},\ \{3,5,10\},\  \{7,11\},\ \{8\}, \{9\}).\]

The proof of  Theorem \ref{thm:011} gives rise to a bijection from $\I_n(011)$ to $G_n$.
For an integer sequence $s$, let zeros$(s)$ be the number of zeros in $s$.
Let $\I(011)$ be the set of all $011$-avoiding inversion sequences, regardless of length and $G$ the set of all restricted growth sequences.

Define a map $\kappa: \I(011) \rightarrow G$ for $e \in \I(011)$ recursively.
If $|e|=1$, then $e=(0)$ and we define $\kappa(e)=(1)$.
For $|e|=n>1$, assume $\kappa(e_1, \ldots, e_{n-1})$ has been defined and let $k=$ zeros$(e)$.
Recall from the proof of Theorem \ref{thm:011} that since $e \in \I_n(011)$, if $e_n>0$ then $e_n$ must be one of the $k$ elements of
$[n-1] - \{e_1, \ldots e_{n-1}\}$, call them $a_1 < a_2 < \cdots < a_k$.
So with that notation, we define $\kappa(e) = \kappa(e_1, \ldots, e_{n-1}) \cdot v_n$
where $v_n = k+1$ if $e_n=0$ and $v_n = i$ if $0<e_n=a_i$.

For example, $\kappa(0,0,0,1,4,3,0,0,0,6,8,5)=(1,2,3,1,3,2,4,5,6,3,4,2)$

It is not hard to prove by induction that if 
$e =(e_1, \ldots, e_{n}) \in \I(011)$ and $\kappa(e_1, \ldots, e_{n}) = (v_1, \ldots, v_n)$ then
$1 \leq v_i \leq {\rm zeros}(e_1, \ldots, e_i)$ for all $i$, from which it clearly follows that $(v_1, \ldots, v_n) \in G$.  It is straightforward to reverse $\kappa$, giving a length-preserving bijection between $\I(011)$ and $G$.

\subsection{Wilf-equivalent patterns {\bf 101} and {\bf 110}:   $\Sn_n$(1-23-4)}
%A113227, \#3207 A,B}

Our calculations suggested that the patterns $101$ and $100$ are Wilf equivalent on inversion sequences. The avoidance sequence for both patterns agree with sequence A113227 in the OEIS \cite{Sloane}, where it is said to be $|\Sn_n$(1-23-4)$|$, the number of permutations avoiding the pattern 1-23-4, that is, with no $i < j < k$ such that $\pi_i < \pi_j < \pi_{j+1} < \pi_k$. 
The asymptotics of $|\Sn_n$(1-23-4)$|$ were studied by Elizalde in \cite{elizalde}, where he established good upper and lower bounds.

The OEIS led us to a paper of David Callan \cite{callan10}, which shows that permutations of $[n]$ avoiding 1-23-4 are in bijection with
increasing ordered trees with $n+1$ vertices whose leaves, taken in preorder, are also increasing.  Callan showed that if $u_{n,k}$ is the number of such trees with $n+1$ vertices in which the root has $k$ children then
\begin{equation}
u_{n,k} = u_{n-1,k-1} + k \sum_{j=k}^{n-1} u_{n-1,j}
\label{callan}
\end{equation}
with initial conditions $u_{0,0}=1$ and $u_{n,k}=0$ if $k > n$, or $n>0$ and $k=0$.

\begin{theorem} \label{theorem:101}
$|\I_n(101)| = |\Sn_n$\rm{(1-23-4})$|$.
\end{theorem}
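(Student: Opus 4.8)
The plan is to set up a recurrence for a refined count of $101$-avoiding inversion sequences that matches Callan's recurrence \eqref{callan}, and then invoke his bijection. First I would look for an alternate characterization of $\I_n(101)$ analogous to the observations used throughout the paper: an inversion sequence $e$ avoids $101$ precisely when, for every value $v$, once a positive value $v$ appears and then a value strictly less than $v$ appears later, $v$ may never appear again — equivalently, the positions at which each positive value occurs, together with the running maxima, must be ``nested'' rather than ``interleaved.'' I would make this precise and, if needed, replace it with the cleaner statement that after any descent below level $v$ the level $v$ is ``closed.'' The right refinement to track is suggested by the target recurrence: define $u_{n,k}$ to be the number of $e \in \I_n(101)$ with a suitable statistic (the natural candidate being the number of \emph{distinct} positive values that are still ``open'' at the end, or the number of ascents from $0$, chosen so that the $n=7$ value $3207$ decomposes correctly); I would compute small cases to pin down which statistic makes the counts equal to Callan's $u_{n,k}$.

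Next I would derive the recurrence by considering how $e \in \I_n(101)$ arises from $(e_1,\dots,e_{n-1}) \in \I_{n-1}(101)$ by appending $e_n$. As in the $011$ and $000$ proofs, appending a brand-new maximal or fresh value corresponds to the $u_{n-1,k-1}$ term, while appending a value that reuses an ``open'' level corresponds to the summation $k\sum_{j=k}^{n-1} u_{n-1,j}$: the factor $k$ counts the choice of which open level to land on (or which open level's block to join), and the sum over $j \geq k$ ranges over the possible ``states'' of the length-$(n-1)$ sequence from which one can legally append while keeping exactly $k$ open levels and avoiding $101$. The key point to verify is that appending $e_n$ never creates a $101$ pattern exactly under these conditions, and that the map $(e', e_n) \mapsto e$ is a bijection onto the claimed set — this is where the alternate characterization does the work. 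Matching the initial conditions ($u_{0,0}=1$, and $u_{n,k}=0$ for $k>n$ or for $n>0,k=0$) should be immediate once the statistic is chosen correctly.

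The main obstacle I anticipate is identifying the correct refining statistic and proving that the append-operation recurrence it satisfies is \emph{exactly} \eqref{callan} rather than some superficially different recurrence — the asymmetry of the summation $\sum_{j=k}^{n-1}$ (as opposed to a simple $k\,u_{n-1,k}$ term) means the statistic must encode enough information about the ``profile'' of open levels that appending can shrink the count of open levels from any $j \geq k$ down to $k$. Getting this bookkeeping right, and checking it against the sequence $1,2,6,23,\dots,3207$ for the first several $n$, is the crux; once the recurrence and initial conditions agree with Callan's, Theorem~\ref{theorem:101} follows since $u_{n,k}$ counts the increasing ordered trees on $n+1$ vertices with increasing leaves in preorder and root of degree $k$, which Callan proved are in bijection with $\Sn_n(\text{1-23-4})$. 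I would also remark that the companion claim $|\I_n(110)| = |\I_n(101)|$ (stated in Table~\ref{permswords}) presumably follows from a separate bijection or from the same recurrence applied to $110$-avoiders, and I would handle that in a subsequent lemma.
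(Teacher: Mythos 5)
Your high-level strategy is the same as the paper's: refine $|\I_n(101)|$ by a statistic, show the refined counts satisfy Callan's recurrence \eqref{callan} with his initial conditions, and conclude by summing over $k$. But the proposal leaves precisely the content of the proof as a to-do. You never identify the refining statistic --- you list candidates (``open'' levels, ascents from $0$) and say you would compute small cases to pin it down --- and you never establish the recurrence; you explicitly flag the bookkeeping as ``the crux'' and the ``main obstacle.'' Checking agreement with $1,2,6,\dots,3207$ for small $n$ is not a proof. As it stands there is a genuine gap: the statistic and the argument for \eqref{callan} are both missing.

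For comparison, the paper's statistic is simply the number of \emph{zeros}, and the recurrence is obtained not by appending $e_n$ on the right but by deleting $e_1$ on the left: the map $\gamma(e_1,\dots,e_n)=\sigma_{-1}(e_2,\dots,e_n)$ sends a $101$-avoider with $k$ zeros and $\ell$ ones to a $101$-avoider of length $n-1$ with $k+\ell-1$ zeros. When $\ell=0$ this is a bijection onto $Z_{n-1,k-1}$, giving the term $u_{n-1,k-1}$; when $\ell>0$ the key structural fact is that, to avoid $101$, the $\ell$ ones must occupy \emph{consecutive} positions within the subsequence of zeros and ones (whose first entry is $e_1=0$), so there are exactly $k$ admissible placements and each element of $Z_{n-1,k+\ell-1}$ has exactly $k$ preimages. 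Summing over $\ell$ yields $z_{n,k}=z_{n-1,k-1}+k\sum_{j=k}^{n-1}z_{n-1,j}$. This is where the factor $k$ and the one-sided sum $\sum_{j\ge k}$ actually come from; your proposed right-end append decomposition would have to reproduce this, and there is no evident statistic of the length-$(n-1)$ prefix for which appending a single entry collapses an arbitrary $j\ge k$ down to $k$ in $k$ ways. If you want to complete the proof along your lines, I would recommend switching to the left-deletion map and the zeros statistic.
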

\begin{proof}
Let $Z_{n,k}$ be the number of $e \in \I_n(101)$ with exactly $k$ zeros and let $z_{n,k} = |Z_{n,k}|$.  We show that $z_{n,k}$
satisfies \eqref{callan} with the same initial conditions.

Let $Z_{n,k,\ell}$ be the number of $e \in Z_{n,k} $ with exactly $\ell$ ones.
Recall that applying $\sigma_{-1}$ to an inversion sequence decreases the positive entries by 1.

Define $\gamma:  \I_n \rightarrow \I_{n-1}$ by $\gamma(e_1, \ldots, e_n)= \sigma_{-1}(e_2, \ldots, e_n)$.
Note that $\gamma(Z_{n,k,\ell}) = Z_{n-1,k+\ell-1}$.
In fact, if $\ell=0$, $\gamma$ is a bijection between $Z_{n,k,0}$ and $ Z_{n-1,k-1}$.
However, if $\ell >0$, each element of $ Z_{n-1,k-1}$ is the image under $\gamma$ of $k$ elements of $Z_{n,k,\ell}$.
To see this, let $e \in Z_{n,k,\ell}$ and let ${\tilde e}=(e_{b_1}, e_{b_2}, \ldots, e_{b_{k+\ell}})$, 
be the subsequence of $e$ consisting of the zeros and ones in $e$.
Note $e_{b_1}=0$ and, since $e$ avoids $101$, the $\ell$ ones in ${\tilde e}$ must be consecutive.
There are  $k$ such ways to place the ones in ${\tilde e}$, namely as:
\[
(e_{b_2}, \ldots, e_{b_{1+\ell}}),
(e_{b_3}, \ldots, e_{b_{2+\ell}}), \ldots 
(e_{b_{k+1}}, \ldots, e_{b_{k+\ell}}).
\]
Thus, since $z_{n,k} 
=\sum_{\ell=0}^{n-k} |Z_{n,k,\ell}| = |Z_{n,k,0} |+ \sum_{\ell=1}^{n-k} |Z_{n,k,\ell}|$, we have

\[
z_{n,k} 
\ =\ |Z_{n-1,k-1}| + \sum_{\ell=1}^{n-k} k|Z_{n-1,k+\ell-1}|
\ = \ z_{n-1,k-1} + k\sum_{\ell=1}^{n-k} z_{n-1,k+\ell-1}.
\]
Clearly 
$z_{0,0}=1$ and $z_{n,k}=0$ if $k > n$, or $n>0$ and $k=0$.
Re-indexing the summation gives \eqref{callan}.
\end{proof}

Comparing the proof of the recurrence for $z_{n,k}$ here with Callan's proof of the recurrence for $u_{n,k}$ gives rise to a bijection between $101$-avoiding inversion sequences with $k$ zeros and ordered increasing trees with increasing leaves in which the root has $k$ children.  We omit the details for now.

\begin{theorem}
$|\I_n(101)| = |\I_n(110)|$.
\end{theorem}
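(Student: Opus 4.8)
The plan is to give a bijection between $\I_n(101)$ and $\I_n(110)$ that can be described purely in terms of the local structure that distinguishes the two patterns. Both patterns begin with a positive-over-zero or repeated-value configuration, and the key observation is the one already used in the proof of Theorem~\ref{theorem:101}: in a $101$-avoiding sequence, among the positions carrying any fixed value $v$, once a larger value has appeared between two copies of $v$ no further copy of $v$ can occur; equivalently, for each value $v$, the positions strictly between consecutive occurrences of $v$ are constrained. For $110$-avoidance the analogous statement is that for each value $v$, once $v$ has occurred twice, no strictly larger value may appear afterwards among the entries that are ``relevant'' to $v$. So first I would isolate, for a general inversion sequence, the subsequence of entries equal to a given value, together with the record of what lies between them, and phrase both avoidance conditions as conditions on these blocks.

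The cleanest route is probably to induct on $n$ and refine by a statistic, mirroring the $z_{n,k}$ and $Z_{n,k,\ell}$ bookkeeping already in the paper. I would let $W_{n,k}$ be the set of $e\in\I_n(110)$ with exactly $k$ zeros, set $w_{n,k}=|W_{n,k}|$, and prove that $w_{n,k}$ satisfies the same recurrence \eqref{callan} with the same initial conditions as $z_{n,k}$. For $110$ the relevant reduction is not $\gamma$ but something like ``delete the last zero and renormalize,'' or a reversal-type map on the zero/one block; the point is that in a $110$-avoiding sequence the zeros and ones again interact rigidly — after the second $1$ no $0$ can follow (that would be a $110$ with the two $1$'s and a later $0$), so the ones and the trailing zeros occur in a controlled pattern, and an argument symmetric to the $101$ case should show that each element of $W_{n-1,k-1}$ is the image of $k$ elements of the appropriate refined set. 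Carrying this out gives $w_{n,k}=z_{n,k}$ for all $n,k$, hence $|\I_n(110)|=\sum_k w_{n,k}=\sum_k z_{n,k}=|\I_n(101)|$, which is the claim.

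The main obstacle I anticipate is pinning down the correct ``reduction'' map for $110$ so that the multiplicity-$k$ fibering comes out identically to the $101$ case: the asymmetry is that $101$ forbids a repeat after an intervening ascent while $110$ forbids an intervening descent-to-old-value after a repeat, and naively these produce different block shapes. I would handle this by working with the subsequence of entries that are $\le$ the current running maximum-of-repeated-values rather than just the zeros and ones, so that the ``consecutive block'' phenomenon used for $101$ reappears verbatim for $110$. If a clean recursive bijection proves fiddly, a fallback is to compose: map $\I_n(101)$ to ordered increasing trees with increasing leaves via the (omitted) bijection mentioned just after Theorem~\ref{theorem:101}, and then build the analogous tree bijection for $\I_n(110)$ from the recurrence $w_{n,k}$; both then land on Callan's trees and composing gives the desired equinumerosity, with Wilf-equivalence following at the level of the refined statistic ``number of zeros.''
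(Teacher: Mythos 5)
Your proposal is correct and takes essentially the same route as the paper: the paper also keeps the statistic ``$k$ zeros, $\ell$ ones,'' applies the very same map $\gamma(e)=\sigma_{-1}(e_2,\ldots,e_n)$, and observes that $110$-avoidance forces all but the first of the $\ell$ ones to sit at the end of the $0/1$ substring, so the first one may occupy any of $b_2,\ldots,b_{k+1}$, giving the same fiber size $k$ and hence the same recurrence \eqref{callan}. The obstacle you anticipate does not materialize: no modified reduction map is needed, only this changed count of admissible $0/1$ substrings.
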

\begin{proof}
We observe that the number of $e \in \I_n(110)$ with $k$ zeros and $\ell$ ones satisfies the same recurrence \eqref{callan}.
The proof follows the same process as that of Theorem \ref{theorem:101} to the point of considering the substring of 0's and 1's for some $e \in \I_n(110)$.  Since $e$ now avoids $110$, in the substring
${\tilde e}=(e_{b_1}, e_{b_2}, \ldots, e_{b_{k+\ell}})$, 
consisting of its zeros and ones,  all but the first $\ell$ ones must be in the last $\ell-1$ positions of
${\tilde e}$.  This leaves $k$ possible positions for the first one:
$b_2, b_3, \ldots, b_{k+1}$.
\end{proof}

Using the recursion \eqref{callan} for $ u_{n,k+1}$  and  $ u_{n,k}$, if we take $k u_{n,k+1}$ and subtract $(k+1)u_{n,k}$ the result is
\[
u_{n,k+1} + k u_{n-1,k}  = \frac{k+1}{k} \left ( u_{n,k} - u_{n-1,k-1} \right ) .
\]
Is there a simple interpretation for this in terms of inversion sequences, trees, or permutations?

\subsection{Avoiding {\bf 010}, avoiding {\bf 100 }}
 %\#979 and  \#3399, only one class for each}

Our calculations show that the number of inversion sequences avoiding the pattern $010$ is given by:
\[
1, 2, 5, 15, 53, 215, 979, 4922, 26992, \ldots ,
\]
but this sequence did not appear in the OEIS (is is now A263779) and we do not yet know how to count it.

Similarly, we have calculated that $|\I_n(100)|$ begins like this:
\[
1, 2, 6, 23, 106, 565, 3399, 22678, 165646, \ldots ,
\]
but it also did not appear in the OEIS (it is now A263780).

\section{Revisiting 021-avoidance}

In this section, we delve deeper into the structure of 021-avoiding inversion sequences and their relationship to  structures counted by the Schr\"oder numbers.  First, we introduce a bijection that illustrates further natural correspondences between statistics in inversions sequences and Schr\"oder paths.  After, we consider the ascent statistic, which  we show is symmetric in $\I_n(021)$ via a bijection with certain black/white binary trees.

\subsection{Another correspondence between $\I_n(021)$ and Schr\"oder paths}

%Alternate if we decide to include Ira's generating function argument:
%In this section we describe a different relationship between 021-avoiding sequences and Schr\"oder paths.  We start with a generating function approach due to Ira Gessel and from it derive a bijection which preserves different statistics than the bijection in Section 2.2.

Now we define a different bijection, which serves as a tool to relate different statistics between inversion sequences and Schr\"oder paths.  Using Observation \ref{observation:021}, we can see that for any $e \in \I_n(021)$, $e$ can be written as $e=\bb_0\bb_1\bb_2\ldots\bb_{n-1}$ where $\bb_k$ is the substring $(e_i,e_{i+1},\ldots,e_j)$ such that $e_i$ is the first occurrence of $k$, and, for every $t \in \{i, i+1, \ldots , j\}$, we have $e_t=k$ or $e_t=0$.  We call each $\bb_k$ a \emph{block} and say that some $\bb_k$ is \emph{maximal} if $\bb_k=(e_{k+1},e_{k+2},\ldots,e_j)$ (making $e_{k+1}$ a maximal entry).  Notice that any maximal entry $e_{j+1}$ must be the first entry of $\bb_j$.

For example, if $e=(0,1,0,1,0,2,5,7,7,7,9,0,10,11,12)$, then $\bb_0=(0)$, $\bb_1=(1,0,1,0)$, $\bb_2=(2)$, $\bb_5=(5)$, $\bb_7=(7,7,7)$, $\bb_9=(9,0)$, $\bb_{10}=(10)$, $\bb_{11}=(11)$, $\bb_{12}=(12)$, and $\bb_3=\bb_4=\bb_6=\bb_8=\bb_{13}=\bb_{14}$ are all empty strings.  Additionally, $\bb_0$, $\bb_1$ and $\bb_{7}$ are maximal.

This decomposition is essential to describing our bijection.  It also produces a recurrence relation on 021-avoiding inversion sequences.

\begin{theorem}
Let $Y_{n,k}$ be the number of $021$-avoiding inversion sequences of length $n$ with $k$ maximal elements, including $e_1=0$.  Then
\begin{equation}
Y_{n,k} = Y_{n-1,k-1} + 2 \sum_{i=k}^{n-1} Y_{n-1,i},
\end{equation}
with initial condition $Y_{1,1}=1$.
\end{theorem}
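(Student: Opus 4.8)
The plan is to derive the recurrence by analyzing how an element of $\I_n(021)$ is built from an element of $\I_{n-1}(021)$ via the block decomposition just introduced. Given $e = \bb_0\bb_1\cdots\bb_{n-1} \in \I_n(021)$ with $k$ maximal blocks (equivalently $k$ maximal entries, counting $e_1=0$), I would look at the last position $n$ and its value $e_n$, and track what removing $e_n$ does to the number of maximal elements. The key observation is that by Observation~\ref{observation:021}, the positive entries are weakly increasing, so $e_n$ is either $0$, or equals the value of the current ``top'' block, or is a new maximal value $e_n = n-1$; and the last of these happens exactly when $e_n$ starts a new (necessarily maximal, singleton-so-far) block $\bb_{n-1}$.

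The three cases I would spell out: \emph{(i)} If $e_n = n-1$, then $e_n$ is a new maximal element, so $(e_1,\ldots,e_{n-1}) \in \I_{n-1}(021)$ has $k-1$ maximal elements; conversely appending $n-1$ to any such sequence works. This contributes $Y_{n-1,k-1}$. \emph{(ii)} If $e_n$ equals the largest value $m$ currently appearing (so $e_n$ extends the top block, which is maximal with maximal entry $m = $ some earlier position value), then deleting $e_n$ leaves an element of $\I_{n-1}(021)$ still with $k$ maximal elements, and the deleted entry could have been appended after any of the $k$ maximal blocks $\bb_{i_1},\ldots,\bb_{i_k}$ with $i_k \ge \cdots$ — wait, more carefully: appending a nonzero non-new entry to $e' \in \I_{n-1}(021)$ means appending the value of its top block, which is allowed, and it keeps the maximal-element count at $k$; but I must check that this gives the right summation. \emph{(iii)} If $e_n = 0$, deleting it leaves $e' \in \I_{n-1}(021)$ with $k$ maximal elements, and conversely $0$ can always be appended. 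The subtlety is that cases (ii) and (iii) together must account for $2\sum_{i=k}^{n-1} Y_{n-1,i}$, not $2Y_{n-1,k}$, which tells me the parametrization is not simply ``append $0$ or append the top value to a length-$(n-1)$ sequence with exactly $k$ maximal blocks.'' Instead I expect the right bijective picture is: an element of $\I_n(021)$ with $k$ maximal elements ending in a block $\bb_j$ of a certain type corresponds to choosing $e' \in \I_{n-1}(021)$ with $i \ge k$ maximal elements, then \emph{truncating} $e'$ after its $k$th maximal block and attaching a tail of $0$'s and copies of the $k$th maximal value — the factor $2$ coming from two ways (put the new last entry as a $0$ or as the top value) and the sum over $i \ge k$ coming from how many maximal blocks of $e'$ get ``absorbed'' into the final block of $e$.

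Concretely, here is the cleaner approach I would actually carry out. Map $e \in \I_n(021)$ with $k$ maximal elements to the pair (the prefix $\bb_0\cdots\bb_{k-1}$ up to and including the start of the $k$th maximal block, together with the suffix which is a string of $0$'s and copies of the top value). The suffix of length $\ell \ge 1$ starting right after position of the $k$th maximal entry has a first entry equal to that maximal value $v$; its entries are each $0$ or $v$. Removing the last entry of $e$: if $\ell \ge 2$ this is a legal sequence in $\I_{n-1}(021)$ still with $k$ maximal elements and a suffix of the same shape (length $\ell-1$); if $\ell = 1$, i.e. $e_n$ itself is the $k$th maximal entry (which forces $e_n = n-1$), we land in $\I_{n-1}(021)$ with $k-1$ maximal elements. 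To get the $2\sum_{i=k}^{n-1}$ form, I reverse this: start from $e' \in \I_{n-1}(021)$ with $i$ maximal elements where $i \ge k$; its maximal entries sit at positions $p_1 < p_2 < \cdots < p_i$; keep $\bb_0\cdots\bb_{p_k - 1}$ and replace everything from position $p_k$ onward by: the maximal value at $p_k$, then a string built by mapping each later entry of $e'$ either to $0$ or to that same value (the choices being forced except for the final new entry where we get a genuine binary choice, hence the $2$) — this needs checking that exactly $2$ preimages arise for each $i$, for each $e'$. The main obstacle I anticipate is precisely pinning down this reverse map so that it is a clean bijection contributing exactly $2\sum_{i=k}^{n-1} Y_{n-1,i}$; handling the degenerate boundary (when $k=1$, $e_1=0$ is always maximal, and the suffix may be the whole sequence) and verifying the initial condition $Y_{1,1}=1$ are routine once the main combinatorial correspondence is set up.
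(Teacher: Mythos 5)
You correctly diagnose that the naive ``delete $e_n$'' analysis only yields $Y_{n-1,k-1}+2Y_{n-1,k}$ and that the sum over $i\ge k$ must come from maximal blocks being destroyed, but the concrete construction you then propose does not work, and this is a genuine gap. Your structural claim --- that the suffix of $e$ after the $k$th maximal entry consists only of $0$'s and copies of that maximal value $v$ --- is false: in the paper's own example $e=(0,1,0,1,0,2,5,7,7,7,9,0,10,11,12)$ the maximal entries are $e_1,e_2,e_8$, yet the suffix after $e_8$ contains $9,10,11,12$. Consequently your reverse map, which ``replaces everything from position $p_k$ onward'' by a string of $0$'s and copies of $v$, collapses the non-maximal blocks that follow the last maximal block and is not information-preserving, so no accounting of ``exactly $2$ preimages'' can turn it into a bijection. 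You flag this yourself as the main obstacle, and it is precisely the point at which the proof is missing.

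The idea you need is that the insertion/deletion happens at an \emph{interior} position --- the end of the $k$th maximal block --- with the entire tail of the sequence kept verbatim but shifted by one position. Given $e'\in\I_{n-1}(021)$ with $i\ge k$ maximal blocks and $k$th maximal block $\bb_j$, append either $j$ or $0$ to the end of $\bb_j$ (two choices, giving the factor $2$); every block after $\bb_j$ is pushed one position later, so a formerly maximal block $\bb_m$, whose first entry sat at position $m+1$, now sits at position $m+2$ and is no longer maximal, while the result is still a legal inversion sequence with weakly increasing positive entries, hence still $021$-avoiding. Exactly the first $k$ maximal blocks of $e'$ survive, so every $e'$ with $i\ge k$ maximal blocks contributes twice, which is $2\sum_{i=k}^{n-1}Y_{n-1,i}$. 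The inverse deletes the last entry of the last maximal block of $e$ when that block has more than one entry; the singleton case forces $e_n=n-1$ and is your case (i), giving $Y_{n-1,k-1}$. The tail is never rewritten, only translated; that translation is the mechanism that produces the sum over $i$.
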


\begin{proof}
Given a 021-avoiding inversion sequence of length $n$ with $k$ maximal elements, let $\bb_j$ be the last maximal block.  Consider the effect of removing the last entry of $\bb_j$.  If $\bb_j$ contains only one element, then $\bb_j=\bb_n$  since it is the last maximal block, and removing $\bb_n$ gives an inversion sequence contributing to $Y_{n-1,k-1}$.  If $\bb_j$ contains more than one element, it could end in $j$ or $0$.  Either way, removing the last entry shifts everything following $\bb_j$ one position earlier.  This shift may result in a number of additional maximal blocks following $\bb_j$, so the resulting inversion sequence contributes to the term $Y_{n-1,i}$ where $k \leq i \leq n-1$.

Conversely, a 021-avoiding inversion sequence of length $n$ with $k$ maximal elements can be obtained from a 021-avoiding inversion sequence of length $n-1$ with $k-1$ maximal elements by appending $n-1$ to the end.  Additionally, given any 021-avoiding inversion sequence of length $n-1$ with at least $k$ maximal elements, if $\bb_j$ is the $k$th maximal block, we can obtain a some $e \in \I_n(021)$ with $k$ maximal blocks by appending either $j$ or $0$ to the end of $\bb_j$.  By doing this, all maximal blocks following $\bb_j$ are no longer maximal, since the entries have been shifted, resulting in the desired number of maximal blocks.
\end{proof}

A \emph{valley} in a Schr\"oder Path is a $D$ step immediately followed by an $U$ step.  Let the \emph{valley word} of a Schr\"oder Path be the word obtained when any consecutive $DU$ is replaced with a $V$.  So the Schr\"oder path $UUDUFUDUFDDDUUDUDDUUUFDDD$  from Figure \ref{Schroeder Path} would have valley word $$UUVFUVFDDVUVDVUUFDDD.$$ We define a mapping $\phi: R_{n-1} \longrightarrow \I_n(021)$ using the valley word for each element in $R_{n-1}$.  The entries of the valley word are interpreted as instructions for building an inversion sequence.

Let $p$ be the valley word of a path in $R_{n-1}$.  Define $M$ to be a word on the elements $\{0,1,2,\ldots,n-1\}$ that keeps track of the current maximal blocks in the inversion sequence being built, which we denote $e$.  Initially, set $M=0$ and $e=\bb_0$ where $\bb_0=(0)$.  So, the initial length of $e$ is 1.

The valley word $p=p_1p_2\ldots p_{\ell}$ is read left to right and each $p_i$ interpreted as an action performed on $M$ and $e$.
\begin{itemize}
\item If $p_i=U$, append $\ell_i$ to the end of $e$ and $M$, where $\ell_i$ is equal to the current length of $e$.  This is equivalent to starting a new block $\bb_{\ell_i}$ at the end of $e$.

\item If $p_i=D$, delete the last entry of $M$.

\item If $p_i=V$, append a 0 to the end of block $\bb_j$ where $j$ is the last entry of $M$.

\item If $p_i=F$, append a $j$ to the end of block $\bb_j$ where $j$ is the last entry of $M$.
\end{itemize}

Notice that this construction yields an inversion sequence with weakly increasing positive entries and therefore avoids 021.  Additionally, $\phi$ is reversible and is a bijection.  As an example, \[\phi(UUV FUV FDDV UV DV UUFDDD)=(0,1,0,0,2,0,2,5,0,5,9,0,12,13,13),\] is the image of the Schr\"oder path in Figure \ref{Schroeder Path}.
Notice that this is not the same as \[\rho(UUV FUV FDDV UV DV UUFDDD)\] from Section 2.1.

The bijection $\phi$ succeeds in relating different statistics in 021-avoiding inversion sequences and Schr\"oder paths.  In the following theorem, the number of \emph{late zeros} in an inversion sequence $e \in \I_n(021)$ is the number of zeros occurring in the blocks $\bb_1 \bb_2 \ldots \bb_{n-1}$.  Additionally, the number of distinct nonzero values in an inversion sequence $e=\bb_0\bb_1 \bb_2 \ldots \bb_{n-1} \in \I_n(021)$ is $|\{\bb_i \mid i \neq 0, \bb_i \neq \epsilon\}|$.  By using the definition of $\phi$, we achieve the following results.

\begin{corollary} \label{cor:phi}
For $n \geq 1$,
\begin{enumerate}
\item if $p\in R_{n-1}$ with $k$ valleys, then $\phi(p)$ is an inversion sequence in $\I_n(021)$ with $k$ late zeros.  
\item if $p\in R_{n-1}$ where $k$ is the number of occurrences of $U$ in the valley word of $p$ (alternatively, this is the total number of up steps minus the number of valleys), then $\phi(p)$ is an inversion sequence  in $\I_n(021)$ with $k$ distinct nonzero values.
\item if $p\in R_{n-1}$ has $k$ flat steps at height 0, then $\phi(p) \in \I_n(021)$  begins with $k+1$ zeros.
\end{enumerate}
\end{corollary}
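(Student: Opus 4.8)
The plan is to isolate a single invariant of the algorithm defining $\phi$ and read off all three statements from it. The invariant: at the moment the algorithm reads the $i$th letter of the valley word, $|M|-1$ equals the height of the underlying Schr\"oder path just before the step(s) of $p$ that produced that letter. I would prove this by induction on $i$, using that a $U$ increments $|M|$ and comes from an up-step, a $D$ decrements $|M|$ and comes from a down-step, while $F$ and $V$ leave $|M|$ unchanged and a letter $V$ stands for a consecutive $DU$, which has net height change $0$; so the running difference (cumulative number of $U$'s) $-$ (cumulative number of $D$'s) among processed letters equals the path height. Two consequences then do the heavy lifting: the last entry of $M$ is $0$ exactly when $|M|=1$, i.e.\ exactly when the path is at height $0$; and, since a down-step never crosses the $x$-axis, whenever a $V$ is read the height just before its $D$ is at least $1$, so the last entry of $M$ is then positive.

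For part (1), I would observe that a letter $V$ appends exactly one $0$, placed at the end of the block $\bb_j$ with $j$ the last entry of $M$; by the invariant $j\geq 1$, so this $0$ is a late zero. Conversely the only other $0$'s ever written are the initial entry of $\bb_0$ and the ones contributed by flats at height $0$ (a flat appends the last entry of $M$, which is $0$ only at height $0$, hence again lands in $\bb_0$) — none of these is late. Hence late zeros are in bijection with the letters $V$, i.e.\ with the valleys of $p$. For part (2), each letter $U$ opens a new, necessarily nonempty, block $\bb_{\ell_i}$ whose index $\ell_i$ is the current length of $e$, so $\ell_i\geq 1$; since every non-$D$ letter adds exactly one entry to $e$, the length strictly increases between successive $U$'s, so these indices are distinct. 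A block $\bb_j$ with $j\geq 1$ is nonempty iff some $U$ opened it, and the distinct nonzero values occurring in $e=\bb_0\bb_1\cdots\bb_{n-1}$ are precisely the indices $j\geq1$ of nonempty blocks; so the count is the number of $U$'s in the valley word, which is the number of up-steps of $p$ minus the number of valleys (each valley absorbing one up-step).

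For part (3), a flat step of $p$ at height $0$ is read as a letter $F$ with last entry of $M$ equal to $0$, so it appends a $0$ to $\bb_0$; thus $\bb_0$ consists of its original $0$ together with one $0$ per such flat, i.e.\ $k+1$ zeros, and these form a prefix of $e$. If $p$ has an up-step, the first nonempty block $\bb_j$ with $j\geq1$ places its first entry $j\neq0$ immediately after $\bb_0$, so $\phi(p)$ begins with exactly $k+1$ zeros; if $p$ has no up-step it is a word of $n-1$ flats at height $0$, and $\phi(p)=(0,\dots,0)$ has length $n=k+1$, so the claim still holds. The remaining checks — that $\phi$ is well defined and bijective, and that inserting appended entries at the end of the appropriate block keeps the positive entries of $e$ weakly increasing — are routine and were already asserted when $\phi$ was introduced; the real content, and the step I expect to need the most care, is proving the height/$|M|$ invariant cleanly, in particular the assertion that $M$ never equals $(0)$ at the moment a $V$ is read, since all three parts follow from it.
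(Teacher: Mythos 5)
Your proposal is correct, and it takes the only natural route: the paper offers no argument beyond the remark that the corollary follows from the definition of $\phi$, and your height/$|M|$ invariant (together with the observation that the first entry of $M$ is never deleted and all later entries are positive, so the last entry of $M$ is $0$ exactly at height $0$) is precisely the bookkeeping needed to make that assertion rigorous. All three parts then read off as you describe, so this matches the paper's intent while actually supplying the missing details.
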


The number of Schr\"oder paths of length $n$ with $k$ valleys is counted by A101282 in OEIS.

Based on our computations, the following statistic seems to correspond between inversion sequences and Schr\"oder paths.  However, neither $\rho$ nor $\phi$ provide the necessary correspondence.  An \emph{ascent} in a Schr\"oder path is a maximal sequence of consecutive up steps.

 \begin{conjecture} \label{conj:021 ascents}
 The number of $p \in R_{n-1}$ with $k-1$ ascents is equal to the number of $e \in \I_n(021)$ with $k$ distinct  values. 
 \end{conjecture}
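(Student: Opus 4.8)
My plan is to prove the statement by finding a common recurrence, refined by a suitable secondary statistic, rather than by exhibiting a direct bijection (since the excerpt already notes that neither $\rho$ nor $\phi$ works). On the inversion-sequence side, let $D_{n,k}$ denote the number of $e \in \I_n(021)$ with exactly $k$ distinct values (counting the value $0$, which is always present). Using the block decomposition $e = \bb_0 \bb_1 \cdots \bb_{n-1}$ from Section 4.1, $k$ equals $1$ plus the number of nonempty blocks $\bb_j$ with $j \geq 1$. I would set up a recurrence for $D_{n,k}$ by examining how $e$ is built from an inversion sequence of length $n-1$: appending an entry $e_n$ is either (i) a $0$, or (ii) a repeat of the current largest positive value, or (iii) a new maximal value $n-1$, and the first two options do not change the number of distinct values while the third increases it by one. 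The subtlety is that appending $0$ or a repeat is only legal in certain positions (to preserve $021$-avoidance, the new entry must land inside the last maximal block), so the count of legal "stay" moves depends on a secondary statistic; the natural candidate is the number of maximal blocks $Y$-statistic of Theorem with the $Y_{n,k}$ array, or equivalently the length of the last maximal block. I would introduce the refined array $D_{n,k,m}$ = number of $e \in \I_n(021)$ with $k$ distinct values whose last maximal block has length $m$ (or: with $m$ maximal blocks), derive its recurrence, and sum out $m$.

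On the Schröder side, let $P_{n,k}$ denote the number of $p \in R_n$ with exactly $k$ ascents (maximal runs of up steps). I would derive a recurrence for $P_{n,k}$ by a standard first-return decomposition of Schröder paths: a nonempty path is either $F \cdot p'$, or $U \, p' \, D \, p''$ where $p', p''$ are Schröder paths. In the first case the ascent count is unchanged (if $p'$ starts with $U$) or increased by... actually one must track whether $p'$ begins with an up step, so again a refined statistic is needed — say $P_{n,k,\epsilon}$ where $\epsilon \in \{0,1\}$ records whether the path begins with an up step (equivalently, whether the first step is $U$ or not). The decomposition $U\,p'\,D\,p''$: the leading $U$ together with any leading up-run of $p'$ merges into one ascent, the $D$ ends it, and $p''$ contributes its own ascents with its own leading behavior. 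This gives a clean two-variable recurrence for $P_{n,k,\epsilon}$.

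The main obstacle — and where I would spend most of the effort — is choosing the secondary statistics on the two sides so that the refined recurrences literally coincide, term for term, with matching initial conditions. It is quite possible that "number of maximal blocks on the inversion side" and "leading-up-step indicator on the path side" do not pair up directly, and one has to iterate the refinement (track the lengths of several trailing structures) before the recurrences align; alternatively, one may have to pass through the $Y_{n,k}$ recurrence of Theorem and show both $D_{n,k}$ and $P_{n,k}$ are obtained from $Y$ by the same invertible linear transformation. I would first compute both triangles $D_{n,k}$ and $P_{n,k}$ for $n \leq 7$ or $8$ to confirm the conjectured equality numerically (the excerpt only asserts it for the row sums, which are $r_{n-1}$, so verifying the full triangle is a genuine check), then reverse-engineer which refinement makes the bookkeeping match. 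Once the two refined recurrences are shown identical with identical initial data, summing over the secondary parameter yields $D_{n,k} = P_{n-1,k-1}$, which is exactly the statement, and a bijection could then be extracted mechanically from the coupled recurrences if desired.
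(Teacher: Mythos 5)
This statement is left as an open conjecture in the paper --- the authors explicitly say that their computations suggest it but that neither of their bijections $\rho$ nor $\phi$ provides the correspondence, and they list it among the unresolved questions in the concluding section. So there is no proof in the paper to compare against, and your proposal does not supply one either: it is a plan of attack whose decisive step --- showing that the two refined recurrences literally coincide --- is explicitly deferred ("the main obstacle, where I would spend most of the effort"), with the admission that the chosen secondary statistics may not pair up at all. A proof requires that step to be carried out; as written, nothing beyond the already-known numerical evidence is established.

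Two concrete issues with the sketch itself. First, your case analysis for extending $e \in \I_{n-1}(021)$ to $\I_n(021)$ is incomplete: by Observation~\ref{observation:021} the legal choices for $e_n$ are $0$, or any value $v$ with $\max(e) \le v \le n-1$ where $\max(e)$ is the largest positive entry so far --- not just "a repeat of the current largest positive value or the new maximal value $n-1$." Any $v$ strictly between $\max(e)$ and $n-1$ also creates a new distinct value, and the number of such choices depends on $\max(e)$, so the natural secondary statistic is the current maximum (or equivalently $n-1-\max(e)$), not the number of maximal blocks from the $Y_{n,k}$ recurrence. Relatedly, your claimed "subtlety" that appending $0$ or a repeat is only legal in certain positions is not right: appending $0$ or the current maximum at the end is always legal, since a trailing $0$ or a trailing weak maximum can never complete a $021$ pattern. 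Second, on the path side, tracking only whether the path begins with $U$ is unlikely to suffice to match a recurrence indexed by the current maximum on the inversion-sequence side; you anticipate this yourself, but it means the proposal has not yet identified the pair of statistics on which the whole argument hinges. Until the two refined triangles are written down and shown to satisfy the same recurrence with the same initial conditions, the conjecture remains open.
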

 
 The sequence in Conjecture~\ref{conj:021 ascents} is counted by A090981 is OEIS.  Additionally, notice that Conjecture \ref{conj:021 ascents} in tandem with (2) from Corollary~\ref{cor:phi} would prove that the number of Schr\"oder paths in $R_{n-1}$ with $k$ ascents is equal to the number of paths in $R_{n-1}$ with $k$ occurrences of $U$ that are not a part of a valley, $DU$.

\subsection{A symmetric statistic on 021-avoiding inversion sequences}

As {\em ascent} in an inversion sequence $e$ is an index $i$ such that $e_i < e_{i+1}$; the number of ascents in $e$ is denoted by $\asc(e)$.  In this section, we show that the ascent statistic is symmetric on $021$-avoiding inversion sequences.  That is, if $a_{n,k}$ is the number of $e \in \I_n(021)$ with $k$ ascents then $a_{n,k}=a_{n,n-k-1}$.

We make use of a tree structure that appears in the thesis of Brian Drake \cite{drake}.
  Define $\TT_{n-1}$ to be the set of rooted binary trees on $n-1$ nodes, where each node is either black or white, and no node is the same color as its right child.  For an example, see Figure \ref{funny tree}.  Let $\TT$ be the set of all such trees with no restriction on the number of nodes.    
In \cite{drake}, Drake uses an inversion theorem for labeled trees to compute the generating function for $\TT$, keeping track of the number of black nodes.  One consequence is that 
$|\TT_n|=r_n$, the $n$th large Schr\"oder number.

The trees in $\TT_{n}$ are also related by a natural bijection to the separable permutations in $\Sn_n$.
A \emph{separable permutation} is a permutation that can be completely decomposed with direct and skew sums; the trees in $\TT_n$ provide the recipe for this decomposition.  (See  \cite{Albert}.)
The separable permutations in $\Sn_n$ are exactly those that avoid $2413$ and $3142$ and it is known that $|\Sn_n(2413,3142)|=r_n$ (\cite{shapiro,west}).

Returning to the ascent statistic on inversions sequences, observe that for a fixed number of nodes, the number of trees in $\TT$ with $k$ black nodes is the same as the number with $k$ white nodes.  Thus the symmetry of the ascent statistic on $\I_n(021)$ is a consequence of the following, which we will prove.

\begin{theorem}
The number of trees in $\TT_{n-1}$ with $k$ black nodes is the same as the number of inversion sequences in $\I_n(021)$ with $k$ ascents.
\label{thm:symm021}
\end{theorem}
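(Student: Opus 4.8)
The plan is to prove the statement by constructing a bijection $\Psi\colon\I_n(021)\to\TT_{n-1}$ under which $\asc(e)$ equals the number of black nodes of $\Psi(e)$. Because recoloring every node of a tree in $\TT_{n-1}$ with the opposite color is an involution on $\TT_{n-1}$ sending $k$ black nodes to $(n-1)-k$ black nodes, the symmetry $a_{n,k}=a_{n,n-1-k}$ of the ascent statistic is then immediate.

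First I would record the recursive decomposition of $\TT$ that $\Psi$ must imitate. Splitting a nonempty tree as (root color; left subtree, an arbitrary member of $\TT$; right subtree, either empty or again a member of $\TT$ but with root color opposite to the root) gives the system $g_B=xqg(1+g_W)$, $g_W=xg(1+g_B)$, $g=1+g_B+g_W$ for the generating functions refined by number of black nodes, which eliminates to
\[
g(x,q)\;=\;1+x(1+q)g+x^{2}q\,g^{2}(1+g).
\]
Writing $A(x,q):=\sum_{e}x^{|e|}q^{\asc(e)}$ over all $021$-avoiding inversion sequences, the theorem is exactly the identity $A=xg$, equivalently $A=x+x(1+q)A+xqA^{2}+qA^{3}$. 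The shape of this equation already dictates the target: an inversion sequence in $\I_n(021)$ should be either $(0)$ (the empty tree), or built from one strictly shorter $021$-avoiding inversion sequence, or — matching the $g^{2}(1+g)=g^{2}+g^{3}$ in the last term — built from two or three strictly shorter ones, with one ascent forced by the single black node forced among a node and its right child.

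On the inversion-sequence side I would decompose by the value of the last entry, which splits $\I_n(021)$ (for $n\ge 2$) into the $e$ with $e_n=0$, the $e$ with $e_n=n-1$, and the $e$ with $0<e_n<n-1$. Deleting the last entry identifies $\{e:e_n=0\}$ and $\{e:e_n=n-1\}$ with $\I_{n-1}(021)$; I would send the first family to white-rooted trees with empty right subtree and the second to black-rooted trees with empty right subtree, which is consistent with $\asc$ since appending a $0$ creates no ascent while appending the new maximal letter $n-1$ creates exactly one. For the remaining family — where $0<e_n<n-1$ forces at least one ascent — I would use that the positive entries are weakly increasing (Observation~\ref{observation:021}) to peel $e$ into two or three strictly shorter $021$-avoiding inversion sequences mirroring the subtrees $L,R_L,R_R$ above (reading off the block $\bb_{v}$ that terminates $e$, where $v=e_n$, the prefix preceding the first occurrence of $v$, and, in the three-piece case, an intermediate block), and record a color bit encoding the relative order of the forced black node and its right child. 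Applying $\Psi$ recursively to the pieces and reassembling defines $\Psi(e)$, with the inverse obtained by running the peeling backwards.

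The heart of the matter — and the step I expect to be the main obstacle — is the ascent bookkeeping: one must verify that reassembling the pieces creates ascents in exactly the positions, and in exactly the number, predicted by the black nodes in the corresponding spot of the tree. The key local facts are that appending a new maximal letter creates one ascent and appending a $0$ creates none, and that concatenating a prefix $a$ with an appropriately shifted piece $f$ (as in the decomposition behind Theorem~\ref{theorem:021}) gains $1$ ascent unless $f$ begins with the pattern $01$, in which case it gains $0$; from these one needs to argue that, as the $p+1$ pieces coming from the right spine are reattached, the ascent-gaining junctions alternate with a phase fixed by the color bit, so that the number of created ascents equals the number of black spine nodes, $\lceil(p+1)/2\rceil$ or $\lfloor(p+1)/2\rfloor$. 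Confirming this alternation — and that the peeling and reassembly are genuinely inverse — is where the real work lies. A less transparent but more robust route, should the bijection prove stubborn, is to bypass $\Psi$ and prove the functional equation for $A$ directly: decomposing $\I(021)$ by the last position attaining the running maximum and by whether a sequence begins $01$ gives the system $A=x+xA+A(qB+C)$, $C=(qB+C)(x+C)$, $A=B+C$, where $B$ and $C$ count the $021$-avoiding inversion sequences beginning $00$ (or of length $1$), respectively beginning $01$; eliminating $B$ and $C$ returns the cubic above, hence $A=xg$ and the theorem.
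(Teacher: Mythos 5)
Your proposal reaches the theorem by a genuinely different route from the paper's. The paper proves the statement with an explicit recursive bijection $\tau:\I_n(021)\to\TT_{n-1}$: it splits $e$ according to whether $e_2=0$ or $e_2=1$ (which fixes the root color), the length $\ell$ of the initial constant run $e_2=\cdots=e_{\ell+1}$, and the first position $k+1$ after that with $e_{k+1}\ge k-\ell+1$; the middle segment $(0,e_{\ell+2},\ldots,e_k)$ becomes the right subtree and the shifted tail the left subtree, and Proposition~\ref{funny} checks $\asc(e)=B(\tau(e))$ by induction. Your primary route --- a bijection guided by the cubic $A=x+x(1+q)A+xqA^2+qA^3$ and a peeling by the last entry --- is left incomplete at exactly the point you flag: the two-versus-three-piece peeling for $0<e_n<n-1$ and the alternation argument along the right spine are never carried out, so on its own that half is a plan rather than a proof. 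Your fallback, however, is correct and suffices: the system $A=x+xA+A(qB+C)$, $C=(x+C)(qB+C)$, $A=B+C$ does follow from the decomposition used in the proof of Theorem~\ref{theorem:021} once one records that the junction position contributes an ascent unless the second factor begins $0,1$ (which is exactly why the split into $B$ and $C$ is needed), elimination does return the cubic, and $xg$ satisfies the same equation with the same initial term, whence $A=xg$ and the theorem. What the paper's approach buys is the bijection $\tau$ itself, which it reuses to match further statistics (maximal values with black nodes on the leftmost branch, initial zeros with white nodes at its top); what your algebraic route buys is a shorter verification that sidesteps the delicate inductive ascent bookkeeping, at the cost of giving no explicit correspondence.
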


Theorem \ref{thm:symm021} will follow from Proposition \ref{funny} below once we define an appropriate bijection. From that we will have the following.

\begin{corollary}
Let $a_{n,k}$ be the number of $e \in \I_n(021)$ with $k$ ascents.
 Then $a_{n,k}=a_{n,n-k-1}$.  
\label{ascsym}
\end{corollary}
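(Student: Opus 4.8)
The plan is to reduce Corollary \ref{ascsym} entirely to the symmetry of the black/white node statistic on the trees in $\TT_{n-1}$, using Theorem \ref{thm:symm021} as the bridge. First I would observe that $\TT$ has an obvious color-swapping involution: given a tree $T \in \TT_{n-1}$, recolor every node with the opposite color. Since the defining constraint on $\TT$ is symmetric in the two colors (``no node is the same color as its right child'' is preserved under swapping black and white), this map is a well-defined involution on $\TT_{n-1}$ that sends a tree with $k$ black nodes to a tree with $k$ white nodes, i.e. with $(n-1)-k$ black nodes. Hence the number of trees in $\TT_{n-1}$ with $k$ black nodes equals the number with $(n-1)-k$ black nodes.

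Next I would invoke Theorem \ref{thm:symm021}, which identifies the number of $e \in \I_n(021)$ with $j$ ascents with the number of trees in $\TT_{n-1}$ with $j$ black nodes, for every $j$. Applying this with $j = k$ and with $j = (n-1)-k$ and combining with the color-swap symmetry of the previous paragraph gives
\[
a_{n,k} \;=\; \#\{T \in \TT_{n-1} : T \text{ has } k \text{ black nodes}\} \;=\; \#\{T \in \TT_{n-1} : T \text{ has } (n-1)-k \text{ black nodes}\} \;=\; a_{n,(n-1)-k},
\]
which is exactly $a_{n,k} = a_{n,n-k-1}$.

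The only genuine work here lies in Theorem \ref{thm:symm021} (equivalently in Proposition \ref{funny} and the bijection the authors announce), so the main obstacle is constructing that bijection between $\I_n(021)$ and $\TT_{n-1}$ that carries ascents of the inversion sequence to black nodes of the tree; once that is in hand, Corollary \ref{ascsym} is immediate from the trivial color-reversal involution. I would expect the bijection to be built recursively using the block decomposition $e = \bb_0 \bb_1 \cdots \bb_{n-1}$ of a $021$-avoiding inversion sequence introduced in Section 4.1, peeling off the root of the tree in correspondence with a distinguished block (or maximal element), and tracking how appending an entry to $e$ either creates a new ascent or does not, matching the black/white coloring of the newly attached node. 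The subtlety will be checking that the ``no node equals the color of its right child'' constraint corresponds exactly to the structural restrictions that Observation \ref{observation:021} imposes on which entries can follow which in a $021$-avoiding inversion sequence.
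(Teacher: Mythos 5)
Your proposal is correct and follows exactly the paper's route: the authors likewise observe that swapping colors is an involution on $\TT_{n-1}$ exchanging black and white node counts, and then deduce the symmetry of the ascent statistic from Theorem \ref{thm:symm021}, whose proof (via the recursive bijection $\tau$ and Proposition \ref{funny}) carries all the real work. Your anticipated structure of that bijection --- recursive, driven by the block decomposition and matching new ascents to black nodes --- also matches what the paper does.
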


We define a bijection between $\I_n(021)$ and $\TT_{n-1}$ such that the number of ascents in an inversion sequence is equal to the number of black nodes in the corresponding tree.  Define two operations on $\TT$.  Given $T, S \in \TT$, let $\omega(T,S)$ be the tree with a white root that has left subtree $T$ and right subtree $S$.  Similarly, define $\beta(T,S)$ to be the tree with a black root that has left subtree $T$ and right subtree $S$.  Note that in $\omega(T,S)$ and $\beta(T,S)$, the tree $S$ is required to have a black root and white root, respectively.

Throughout this section, let $\I^{00}_n(021)=\{e \in \I_n(021) \mid e_2=0 \}$ and $\I_n^{01}=\{e \in \I_n(021) \mid e_2=1\}$.  These sets are nonempty only for $n \geq 2$.

We define a bijection $\tau: \I_n(021) \rightarrow \TT_{n-1}$ recursively.  Set $\tau(0)$ to be the empty tree.  The mapping $\tau$ will be defined such that any inversion sequence in $\I_n^{00}(021)$ maps to a tree with a white root and any inversion sequence in $\I_n^{01}(021)$ maps to a tree with a black root.  Anytime we consider the operations $\omega(\tau(e'), \tau(e''))$ and $\beta(\tau(e'), \tau(e''))$, we will ensure that $e''$ begins with $0,1$ and $0,0$, respectively, or $e''=(0)$ in order to satisfy the condition on right children.

Given any inversion sequence $e=(e_1,e_2,e_3, \ldots, e_n) \in \I_n(021)$, we can consider two cases based on whether $e_2=0$ or $e_2=1$ (which is equivalent to $e \in \I_n^{00}(021)$ or $e \in \I_n^{01}(021)$, respectively).  Let $\ell$ be the largest value such that $e_2=e_3=\ldots=e_{\ell+1}$.  Additionally, let $k+1$ be the earliest position after $\ell+1$ such that $e_{k+1} \geq k-\ell+1$.   If there is no such position, set $k=n$.  Define \[
\tau(e)=
\begin{cases}
\omega(\tau( 0^{\ell}\cdot  \sigma_{\ell-k}(e_{k+1},e_{k+2},\ldots, e_n)), \tau(0,e_{\ell+2},e_{\ell+3},\ldots, e_k))&\mbox{if }e_2=0 \\ 
\beta(\tau( 0^{\ell} \cdot  \sigma_{\ell-k}(e_{k+1},e_{k+2},\ldots, e_n)), \tau(0,e_{\ell+2},e_{\ell+3},\ldots, e_k))&\mbox{if }e_2=1 \\
\end{cases} .
\]
where $0^{\ell}$ denotes the sequence of $\ell$ zeros.
For an example, see Figure \ref{funny tree}. 

First we show that  $0^{\ell} \cdot \sigma_{\ell-k}(e_{k+1},e_{k+2},\ldots, e_n)$ is a 021-avoiding inversion sequence.  
For $0^{\ell} \cdot  \sigma_{\ell-k}(e_{k+1},e_{k+2},\allowbreak \ldots, e_n)$ each entry $e_{k+i}$ is in the $(\ell+i)$-th position.  So we must show that whenever $e_{k+i} \neq 0$, we have $1 \leq e_{k+i}-k+\ell < \ell+i$.  By choice of $k$ and Observation \ref{observation:021}, it follows that $k-\ell+1 \leq e_{k+i}< k+i$ and the result immediately follows.  Finally, note that $0^{\ell}\cdot  \sigma_{\ell-k}(e_{k+1},e_{k+2},\ldots, e_n)$ will avoid the pattern 021, since its nonzero entries must weakly increase.

Now consider $(0,e_{\ell+2},e_{\ell+3},\ldots, e_k)$.  We know the nonzero entries of $(0,e_{\ell+2},e_{\ell+3},\ldots, e_k)$ will weakly increase.  Additionally, for $i\in\{\ell+2,\ell+3,\ldots,k\}$, $e_{i}$ is in position $i-\ell$.  Whenever $e_i \neq 0$ we want $1 \leq e_i< i-\ell$; this immediately follows by choice of $k$.  Therefore, $(0,e_{\ell+2},e_{\ell+3},\ldots, e_k)$ is a 021-avoiding inversion sequence. Notice that the definitions of $\ell$ and $k$ imply that $e_{\ell+2}=1$ when $e_2=0$ and $e_{\ell+2}=0$ when $e_2=1$, which is necessary to satisfy the condition on right children.

The mapping $\tau$ has an inverse, ensuring that it is a bijection from $\I_{n}(021)$ to $\TT_{n-1}$.

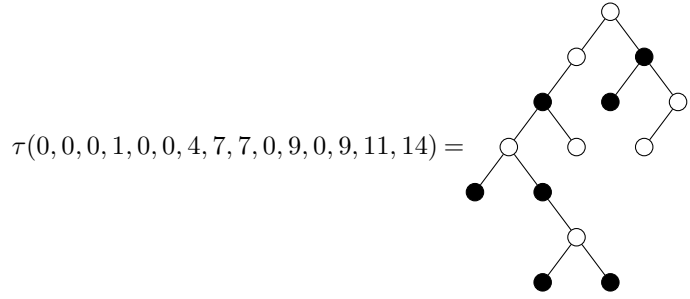
\begin{figure}
\centering
\tikzset{dot/.style={draw,shape=circle,fill=black,scale=.7}}
\tikzset{wdot/.style={draw,shape=circle, fill=white, scale=.7}}
\begin{tikzpicture}

\draw(-4,0)node[]{$\tau(0,0,0,1,0,0,4,7,7,0,9,0,9,11,14)=$};

\draw (.5,0)node[]{
\begin{tikzpicture}[level distance=.6cm, sibling distance=.9cm]
\node[wdot]{}
  child{node[wdot]{} child{node[dot]{} child{node[wdot]{} child{node[dot]{}} child{node[dot]{} child[missing]{} child{node[wdot]{} child{node[dot]{}} child{node[dot]{}}}}} child{node[wdot]{}}} child[missing]{}}
  child{node[dot]{} child{node[dot]{}} child{node[wdot]{}  child{node[wdot]{}} child[missing]{}}};
\end{tikzpicture}
};

\end{tikzpicture}
\caption{An example of $\tau:\I_n(021) \rightarrow \TT_{n-1}$.} \label{funny tree}
\end{figure}

Our bijection provides a correspondence between a number of statistics on inversion sequences and trees. 
First, we settle Theorem \ref{thm:symm021} with the following proposition. 
 Given $T \in \TT$, define $B(T)$ to be the number of black nodes in $\TT$.

\begin{proposition}
For any $e \in \I_n(021)$, $\asc(e)=B(\tau(e))$.
\label{funny}
\end{proposition}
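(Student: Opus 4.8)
The plan is to prove the statement by induction on $n$, directly tracking the recursive definition of $\tau$. The base case $e=(0)$ is immediate: $\asc((0))=0$ and $\tau((0))$ is the empty tree, which has $B=0$. For the inductive step, fix $e=(e_1,\ldots,e_n)\in\I_n(021)$ with $n\ge 2$, and let $\ell$ and $k$ be as in the definition of $\tau$. Write $e' = 0^{\ell}\cdot\sigma_{\ell-k}(e_{k+1},\ldots,e_n)$ and $e'' = (0,e_{\ell+2},\ldots,e_k)$ for the two inversion sequences feeding into the recursion; both are $021$-avoiding of smaller length, so by induction $\asc(e')=B(\tau(e'))$ and $\asc(e'')=B(\tau(e''))$. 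Since $\tau(e)$ is $\omega(\tau(e'),\tau(e''))$ or $\beta(\tau(e'),\tau(e''))$, we have $B(\tau(e)) = B(\tau(e')) + B(\tau(e'')) + \varepsilon$, where $\varepsilon=1$ if $e_2=1$ (black root) and $\varepsilon=0$ if $e_2=0$ (white root). So it suffices to show $\asc(e) = \asc(e') + \asc(e'') + \varepsilon$ with the same $\varepsilon$.

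The heart of the argument is this ascent-counting identity, which I would establish by partitioning the ascent positions of $e$ according to the blocks $e_1$; $(e_2,\ldots,e_{\ell+1})$; $(e_{\ell+1},e_{\ell+2},\ldots,e_k)$; and $(e_k,e_{k+1},\ldots,e_n)$. First, the positions $2,3,\ldots,\ell$ are not ascents of $e$, since $e_2=e_3=\cdots=e_{\ell+1}$ by definition of $\ell$. Position $1$ is an ascent of $e$ iff $e_1=0 < e_2$, i.e. iff $e_2=1$ (as $e_2\in\{0,1\}$); this contributes exactly the term $\varepsilon$. Position $\ell+1$ is an ascent iff $e_{\ell+1} < e_{\ell+2}$; I would observe, using the fact established in the text that $e_{\ell+2}=1$ when $e_2=0$ and $e_{\ell+2}=0$ when $e_2=1$, together with $e_{\ell+1}=e_2$, that position $\ell+1$ is an ascent of $e$ precisely when it is an ascent of $e''$ (recall $e''$ starts $0,e_{\ell+2},\ldots$, and its second entry comparison is $0$ versus $e_{\ell+2}$). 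The remaining ascents of $e$ among positions $\ell+2,\ldots,k-1$ correspond bijectively and value-for-value to ascents in the interior of $e''$, since $\sigma$-shifts and the prepended $0$ do not create or destroy comparisons among the entries $e_{\ell+2},\ldots,e_k$; and position $k$ (the comparison $e_k$ versus $e_{k+1}$) together with the ascents in positions $k+1,\ldots,n-1$ correspond to the ascents of $e'$: the block $0^{\ell}$ at the front of $e'$ contributes no ascent internally, the junction from $0^\ell$ to $\sigma_{\ell-k}(e_{k+1})$ is an ascent of $e'$ iff $\sigma_{\ell-k}(e_{k+1})>0$ iff $e_{k+1}>0$ which (using $e_{k+1}\ge k-\ell+1 \ge 1$ when $e_{k+1}\ne 0$, and the choice of $k+1$ as the first position with $e_{k+1}\ge k-\ell+1$) must be compared against whether $e_k < e_{k+1}$ — here one uses the weak monotonicity of positive entries and that $e_k$ either equals $e_2=e_{\ell+1}\le\ell$ or is a later nonzero entry, so a careful case check shows the ascent at position $k$ of $e$ matches the front junction ascent of $e'$. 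Adding up all four contributions gives $\asc(e)=\varepsilon + \asc(e'') + \asc(e')$, as desired.

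The main obstacle I anticipate is the bookkeeping at the two "seam" positions, $\ell+1$ and $k$, where the decomposition of $e$ splits into $e'$ and $e''$. One has to verify that no ascent is lost or gained at these boundaries, and this requires carefully invoking the precise definitions of $\ell$ (maximal run of the repeated value $e_2$) and $k+1$ (first position after the run where the entry is "large", i.e. $e_{k+1}\ge k-\ell+1$, or $k=n$ if none), as well as the two facts already proven in the text: that $e_{\ell+2}$ equals $1$ or $0$ according to $e_2=0$ or $e_2=1$, and that $e'$, $e''$ are genuinely $021$-avoiding inversion sequences of the right lengths. In particular the degenerate cases $\ell+1 = k$ (so $e''=(0)$, contributing $\asc(e'')=0$) and $k=n$ (so $e'=0^{\ell}$, contributing $\asc(e')=0$) should be checked separately to confirm the seam analysis still yields the identity. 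Once the seam comparisons are pinned down, the rest is the routine observation that applying $\sigma$-shifts and prepending zeros preserves the ascent count among the shifted entries, and the induction closes.
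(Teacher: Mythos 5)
Your proof is correct and takes essentially the same route as the paper's: induction on the recursive definition of $\tau$, reducing everything to the additive identity $\asc(e)=\asc(e')+\asc(e'')+\varepsilon$, where $\varepsilon$ records the color of the root, and verifying it by a seam-by-seam count of ascent positions. If anything your bookkeeping at the seam $\ell+1$ is more careful than the paper's own (which asserts that $\ell+1$ is an ascent in the $e_2=1$ case even though there $e_{\ell+1}=1>0=e_{\ell+2}$; two off-by-one slips cancel in the paper's displayed computation), so you need only carry out the degenerate checks ($e''=(0)$ and $k=n$) that you already flag.
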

\begin{proof} (Induction.)
We immediately see that this is true for $n=1$.  For $n>1$, let $e \in \I_{n}(021)$ with $j$ ascents.  We show that $\tau(e)$ has $j$ black nodes.  Set $\ell$ and $k$ to be defined as in the definition of $\tau$.

If $\ell=n-1$, then $e=(0,0,\ldots,0)$ or $e=(0,1,1,\ldots,1)$.  In either of these cases, the only nodes of $\tau(e)$ are in the leftmost branch.  Additionally, $\tau(0,0,\ldots,0)$ is a tree with all white nodes and $\tau(0,1,1,\ldots,1)$ is a tree whose root is black and all other nodes are white.  So, $B(\tau(e))=\asc(e)$ in either case.

If $\ell<n-1$ and $k=n$, then $\ell+1$ is an ascent; additionally, if $e \in \I_n^{01}(021)$, then $1$ is guaranteed to be an ascent. The only ascents of $e$ besides these are elements of $\{\ell+2,\ell+3,\ldots, n\}$.  Notice that $\tau(e)=\omega(\tau( 0^{\ell}), \tau(0,e_{\ell+2},e_{\ell+3},\ldots, e_n))$ if $e \in \I_n^{00}(021)$ or $\tau(e)=\beta(\tau( 0^{\ell}), \tau(0,e_{\ell+2},e_{\ell+3},\ldots, e_n))$ if $e \in \I_n^{01}(021)$.  In each of these, $B(\tau(e))=B(\tau(0,e_{\ell+2},e_{\ell+3},\ldots, e_n))$ and $B(\tau(e))=1+\tau(0,e_{\ell+2},e_{\ell+3},\allowbreak\ldots, e_n)$ respectively.  It follows inductively that $B(\tau(e))=\asc(e)$.

Now, if $\ell<n-1$ and $k<n$, we have the following argument, which we break into two cases.
If $e_2=0$, then the choice of $\ell$ and $k$ requires that $\ell+1$ and $k$ are ascent positions.  Additionally, there are no ascents in $(e_1,e_2,e_3,\ldots, e_{\ell+1})$, so $\asc(e)=1+\asc(e_{\ell+2},e_{\ell+3},\ldots, e_k)+1+\asc(e_{k+1},e_{k+2},\ldots, e_n)$.  It follows that
\[\begin{array}{rcl}
B(\tau(e))&=&B(\tau(0^{\ell} \cdot \sigma_{\ell-k}(e_{k+1},e_{k+2},\ldots, e_n)))+B(\tau(0,e_{\ell+2},e_{\ell+3},\ldots, e_k))\\
&=&\asc(0^{\ell} \cdot \sigma_{\ell-k}(e_{k+1},e_{k+2},\ldots, e_n))+ \asc(0,e_{\ell+2},e_{\ell+3},\ldots, e_k)\\
&=&\asc(e_{k+1},e_{k+2},\ldots, e_n)+1+ \asc(e_{\ell+2},e_{\ell+3},\ldots e_k)+1\\
&=&\asc(e). \\
\end{array} \]
If $e_2=1$, then the definitions of $\ell$ and $k$ require that $1$, $\ell+1$, and $k$ are ascent positions.  Additionally, there are no ascents in $(e_2,e_3,\ldots, e_{\ell+1})$, so $\asc(e)=1+1+\asc(e_{\ell+2},e_{\ell+3},\ldots, e_k)+1+\asc(e_{k+1},e_{k+2},\ldots, e_n)$.  It follows that
\[\begin{array}{rcl}
B(\tau(e))&=&1+B(\tau(0^{\ell} \cdot  \sigma_{\ell-k}(e_{k+1},e_{k+2},\ldots, e_n)))+B(\tau(0,e_{\ell+2},e_{\ell+3},\ldots, e_k))\\
&=&1+\asc(0^{\ell} \cdot \sigma_{\ell-k}(e_{k+1},e_{k+2},\ldots, e_n))+ \asc(0,e_{\ell+2},e_{\ell+3},\ldots e_k)\\
&=&1+\asc(e_{k+1},e_{k+2},\ldots, e_n)+1+ \asc(e_{\ell+2},e_{\ell+3},\ldots, e_k)+1\\
&=&\asc(e). 
\end{array} \]
\end{proof}

It would be nice to have a direct combinatorial proof of Corollary \ref{ascsym}.

There are two other natural statistics on trees and inversion sequences for which $\tau$ provides a correspondence.\begin{theorem}
The number of maximal values in an inversion sequence $e \in \I_n(021)$, not counting the initial zero of $e$, is equal to the number of black nodes in the leftmost branch in $\tau(e)$.
\end{theorem}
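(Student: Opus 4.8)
The plan is to prove this statement by the same recursive induction used for Proposition~\ref{funny}, tracking a different statistic through the recursive definition of $\tau$. Define $M(e)$ to be the number of maximal values in $e \in \I_n(021)$ other than the initial $e_1 = 0$, and define $L(T)$ to be the number of black nodes on the leftmost branch of $T \in \TT$ (the path from the root following left children). The claim is $M(e) = L(\tau(e))$ for all $e$, and the natural approach is induction on $n$, examining the three cases ($\ell = n-1$; $\ell < n-1$ and $k = n$; $\ell < n-1$ and $k < n$) that structure the definition of $\tau$.

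First I would record a key structural fact about the recursion: in $\tau(e) = \omega(\tau(L), \tau(R))$ or $\beta(\tau(L), \tau(R))$, the leftmost branch of $\tau(e)$ consists of the root (white or black, according as $e_2 = 0$ or $e_2 = 1$) followed by the leftmost branch of $\tau(L)$, where $L = 0^\ell \cdot \sigma_{\ell - k}(e_{k+1}, \ldots, e_n)$ — the right subtree $\tau(R)$ contributes nothing to the leftmost branch. Hence $L(\tau(e)) = L(\tau(L)) + [e_2 = 1]$. So the entire argument reduces to matching this with the corresponding decomposition of the maximal-value count: I must show $M(e) = M(L) + [e_2 = 1]$, i.e.\ that the number of maximal entries of $e$ (past $e_1$) equals the number of maximal entries of $0^\ell \cdot \sigma_{\ell-k}(e_{k+1}, \ldots, e_n)$ (past its initial $0$), plus one when $e_2 = 1$.

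The heart of the matter is therefore a bookkeeping claim about which entries of $e$ are maximal, using Observation~\ref{observation:021} and the precise definitions of $\ell$ and $k$. The entries $e_2 = \cdots = e_{\ell+1}$ are all equal; among these, $e_2$ is maximal (equal to $1$) exactly when $e_2 = 1$, and none of $e_3, \ldots, e_{\ell+1}$ is maximal; the entries $e_{\ell+2}, \ldots, e_k$ all satisfy $e_i < i - \ell \le i - 1$ by the choice of $k$, so none of them is maximal either; and the entries $e_{k+1}, \ldots, e_n$, after the shift $\sigma_{\ell-k}$ that places $e_{k+i}$ in position $\ell + i$, are maximal in $L$ precisely when they were maximal in $e$ (since $e_{k+i} = k+i-1 \iff e_{k+i} - (k - \ell) = \ell + i - 1$), while the padding zeros $0^\ell$ of $L$ contribute no maximal entries past the first. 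Summing these observations gives exactly $M(e) = M(L) + [e_2 = 1]$. Combined with the leftmost-branch identity and the induction hypothesis applied to $L$ (which is strictly shorter), this closes the induction; the base case $n = 1$ is immediate since $\tau(0)$ is the empty tree and $M((0)) = 0$.

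The main obstacle I expect is the careful case analysis of the boundary situations $\ell = n-1$ and $k = n$, where the recursive calls degenerate (e.g.\ $L = 0^\ell$ with no shifted tail, or $R = (0)$), together with keeping straight the off-by-one conventions: "not counting the initial zero" on the inversion-sequence side must line up with "black nodes on the leftmost branch" on the tree side, and one must verify that the root's color genuinely corresponds to whether $e_2 = 1$ — which is exactly the invariant already established in the construction of $\tau$ (that $\I_n^{01}(021)$ maps to black-rooted trees). Once that invariant is invoked, the remaining steps are the routine verifications sketched above.
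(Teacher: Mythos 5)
Your proposal is correct and follows essentially the same route as the paper's proof: both identify that the only candidates for maximal entries are $e_2$ and $e_{k+1},\ldots,e_n$, observe that maximality is preserved under the shift $\sigma_{\ell-k}$ into the left recursive piece $0^\ell\cdot\sigma_{\ell-k}(e_{k+1},\ldots,e_n)$, note that the leftmost branch of $\tau(e)$ is the root (black iff $e_2=1$) followed by the leftmost branch of the left subtree, and close by induction. Your version spells out the bookkeeping (e.g., why $e_{\ell+2},\ldots,e_k$ cannot be maximal) a bit more explicitly than the paper does, but the argument is the same.
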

\begin{proof}
Let $\e \in \I_n(021)$ with $t$ maximal values.  By choice of $\ell$ and $k$ in the definition of $\tau$, none of the entries  $e_3,e_4, \ldots, e_k$  of $e$ can be maximal.  Therefore, the maximal values of $e$ will all occur among the entries $e_1,e_2,e_{k+1},e_{k+2},\ldots, e_n$.  Additionally, if $e_i$ is maximal, where $i \in \{k+1,k+2,\ldots,n\}$, then that corresponding entry will be maximal in $0^{\ell} \cdot \sigma_{\ell-k}(e_{k+1},e_{k+2},\ldots, e_n)$.

The number of black nodes in the left branch of $\tau(e)$ is equal to the number of black nodes in the left branch of $\tau(0^{\ell} \cdot \sigma_{l-k}(e_{k+1},e_{k+2},\ldots, e_n))$ if $e_2=0$.  If $e_2=1$, then the number of black nodes in the left branch of $\tau(e)$ exceeds  the number of black nodes in the left branch of $\tau(0^{\ell}\cdot \sigma_{\ell-k}(e_{k+1},e_{k+2},\ldots, e_n))$ by one. The result follows inductively.
\end{proof}

The mapping $\tau$ visibly encodes the number of initial zeros in an inversion sequence.  The following corollary follows quickly from the definition of $\tau$.

\begin{corollary}
Let $e \in \I_n(021)$ where $0=e_1=e_2=\cdots=e_{\ell+1}$. 
 Then the top $\ell$ nodes of the leftmost branch of $\tau(e)$ are white.
\end{corollary}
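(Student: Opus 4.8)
The statement to prove is: if $e \in \I_n(021)$ with $0=e_1=e_2=\cdots=e_{\ell+1}$, then the top $\ell$ nodes of the leftmost branch of $\tau(e)$ are white. The plan is to induct on $n$, tracking how the recursive definition of $\tau$ consumes the initial block of zeros. The key observation is that when $e$ begins with $\ell+1 \geq 2$ zeros (so $e_2 = 0$ and $e \in \I_n^{00}(021)$), the parameter $\ell$ in the definition of $\tau$ — the largest value with $e_2 = e_3 = \cdots = e_{\ell+1}$ — is \emph{exactly} the number of leading zeros minus one, because all of $e_2, \ldots, e_{\ell+1}$ equal $0$ and $e_{\ell+2} = 1 \neq 0$ by maximality of $\ell$ (using the definitions of $\ell$ and $k$ as noted in the text).

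\textbf{Key steps.} First I would dispatch the base/degenerate cases: if $\ell = 0$ there is nothing to prove, and if $\ell = n-1$ then $e = (0,0,\ldots,0)$ and $\tau(e)$ is the all-white left path, so all $\ell$ top nodes are white. Next, for the main case $1 \le \ell < n-1$ with $e_2 = 0$: by definition $\tau(e) = \omega(\tau(0^\ell \cdot \sigma_{\ell-k}(e_{k+1},\ldots,e_n)),\ \tau(0,e_{\ell+2},\ldots,e_k))$, which has a \emph{white} root, and the leftmost branch of $\tau(e)$ is precisely the white root followed by the leftmost branch of $\tau(0^\ell \cdot \sigma_{\ell-k}(e_{k+1},\ldots,e_n))$. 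Now $0^\ell \cdot \sigma_{\ell-k}(e_{k+1},\ldots,e_n)$ is a $021$-avoiding inversion sequence of length $n - (k - \ell) < n$ (shown earlier in the excerpt), and it begins with at least $\ell$ zeros — indeed with exactly $\ell$ zeros if $e_{k+1}$ is nonzero after the shift, or possibly more, but in all cases its number of leading zeros is $\geq \ell$. Apply the inductive hypothesis to this shorter sequence: its top $\ell - 1$ leftmost-branch nodes (we need $\ell - 1$ because the new sequence starts with $0^\ell$, i.e.\ $\ell$ leading zeros, giving $\ell - 1$ guaranteed white top nodes — wait, I must recompute: the sequence $0^\ell \cdot (\ldots)$ has $\ell$ or more leading zeros, so its "$\ell$" parameter is $\geq \ell - 1$, hence by induction its top $\geq \ell - 1$ leftmost-branch nodes are white). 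Combined with the white root of $\tau(e)$, this yields $\geq \ell$ white nodes at the top of the leftmost branch of $\tau(e)$, as required.

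\textbf{Main obstacle.} The delicate bookkeeping is the exact count of leading zeros in the recursively-produced sequence $0^\ell \cdot \sigma_{\ell-k}(e_{k+1},\ldots,e_n)$ and making sure the induction decrements $\ell$ by exactly one at each step. The cleanest way to handle this is to strengthen the induction slightly: prove that if an inversion sequence in $\I_m(021)$ has \emph{at least} $r+1$ leading zeros, then the top $r$ nodes of the leftmost branch of its image under $\tau$ are white. With this formulation the step is immediate — $e$ with $\ell+1$ leading zeros maps (via the white-rooted $\omega$) to a tree whose leftmost branch starts white and then continues as the leftmost branch of $\tau$ applied to a sequence with $\geq \ell$ leading zeros, so by induction $\geq \ell - 1$ more white nodes, total $\geq \ell$. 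One should also confirm that the case $e_2 = 1$ never arises here, since $\ell \geq 1$ forces $e_2 = 0$; this is automatic from the hypothesis $0 = e_1 = \cdots = e_{\ell+1}$.
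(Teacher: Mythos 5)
Your proof is correct and matches the paper's intent: the paper simply asserts that the corollary ``follows quickly from the definition of $\tau$,'' and your induction (white $\omega$-root whenever $e_2=0$, left subtree built from $0^{\ell}\cdot\sigma_{\ell-k}(e_{k+1},\ldots,e_n)$ which again has at least $\ell$ leading zeros) is exactly the natural formalization of that remark. The strengthened statement in your last paragraph --- at least $r+1$ leading zeros forces the top $r$ leftmost-branch nodes to be white --- is the right way to make the bookkeeping airtight, and it also covers the case where the $\ell$ in the hypothesis is not maximal.
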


\section{Concluding remarks}

This report only scratches the surface of pattern-avoidance in inversion sequences, but we hope that it demonstrates the surprising ability of pattern-avoiding inversion sequences to provide simple and natural models for familiar combinatorial sequences.

A number of open questions have been raised throughout this paper; most notably,  finding general expressions for the avoidance sequences for the patterns $120$, $110$, and $010$.
In addition to settling Conjecture 1 in Section 3.1 and Conjecture 2 in Section 4.1, it would be nice to see a simple bijection between $\I_n(012)$ and Boolean permutations of $[n]$ (Section 2.1); a combinatorial interpretation of the recurrence at the end of Section 3.4 for the number of inversion sequences in $\I_n(101)$ with $k$ zeros;  and a direct combinatorial proof of Corollary 4 in Section 4.2.

There are many obvious ways to extend or generalize this work:  consider longer patterns, sets of patterns, other statistics,  $q$-analogs, bijective proofs, or more natural bijections;  settle enumeration questions;  or even  consider pattern avoidance in $\I_n^{(s)}$ for other sequences $s$ of positive integers.

In Part II of this report, we consider a different generalization of pattern-avoidance in inversion sequences and discover some nice surprises, a few conjectures, and several open questions.

\vspace{.2in}
\noindent
{\large {\bf Acknowledgements}} 

We would like to express our appreciation to Ira Gessel for a generating function argument that led to the bijection between 021-avoiding inversion sequences and the Large Schr\"oder numbers in Section 4.1.
Thanks to Michael Albert for making his PermLab software freely available \cite{PermLab1.0}.
Thanks to the Simons Foundation for a grant to the third author which supported the travel of the second author for collaboration.  We extend our thanks to Toufik Mansour for sending us an advance copy of \cite{Mansour}.

We especially owe a debt of gratitude to Neil Sloane and the OEIS Foundation, Inc. Our work was greatly facilitated by the On-Line Encyclopedia of Integer Sequences \cite{Sloane}.

\bibliographystyle{plain}
\bibliography{pais}

\end{document}